\documentclass[11pt, reqno]{amsart}
\usepackage{amssymb, amsthm, amsmath, amsfonts,mathtools}
\usepackage{array, epsfig}
\usepackage{bbm}
\usepackage{MnSymbol}

\usepackage{hyperref}
\usepackage[numbers,square]{natbib}
\usepackage{tikz-cd}

\pdfsuppresswarningpagegroup=1 
\allowdisplaybreaks

\numberwithin{equation}{section}

\setlength{\oddsidemargin}{-0.0in} \setlength{\textwidth}{6.5in}
\setlength{\topmargin}{-0.0in} \setlength{\textheight}{8.4in} \evensidemargin
\oddsidemargin
\parindent=8mm

\newcommand{\Tr}{\mathop{\mathrm{Tr}}\nolimits}
\newcommand{\He}{\mathop{\mathrm{He}}\nolimits}

\newcommand{\ii}{{\rm{i}}}
\newcommand{\fnorm}{{\mathfrak{N}}}

\newcommand{\bC}{\mathbb{C}}

\newcommand{\bT}{\mathbb{T}}
\newcommand{\bH}{\mathbb{H}}
\newcommand{\bD}{\mathbb{D}}

\newcommand{\cD}{\mathcal{D}}

\DeclareMathOperator*{\cotanh}{cotanh}

\def\dint{\textup{d}}

\newcommand{\E}{\mathbb E}

\newcommand{\R}{\mathbb{R}}
\newcommand{\N}{\mathbb{N}}

\newcommand{\C}{\mathbb{C}}
\newcommand{\Z}{\mathbb{Z}}

\renewcommand{\Re}{\operatorname{Re}}
\renewcommand{\Im}{\operatorname{Im}}





\newcommand{\eps}{\varepsilon}

\newcommand{\toweak}{\overset{w}{\underset{n\to\infty}\longrightarrow}}
\newcommand{\tovague}{\overset{v}{\underset{n\to\infty}\longrightarrow}}
\newcommand{\toweakd}{\overset{w}{\underset{d\to\infty}\longrightarrow}}

\newcommand{\ton}{\overset{}{\underset{n\to\infty}\longrightarrow}}

\newcommand{\bsl}{\backslash}

\newcommand{\dd}{{\rm d}}
\newcommand{\eee}{{\rm e}}

\theoremstyle{plain}
\newtheorem{theorem}{Theorem}[section]
\newtheorem{lemma}[theorem]{Lemma}
\newtheorem{corollary}[theorem]{Corollary}
\newtheorem{proposition}[theorem]{Proposition}

\theoremstyle{definition}

\theoremstyle{remark}
\newtheorem{remark}[theorem]{Remark}


\begin{document}

\author{Zakhar Kabluchko}
\address{Zakhar Kabluchko\\
Institut f\"ur Mathematische Stochastik\\
Universit\"at M\"unster\\
Orl\'eans-Ring 10\\
48149 M\"unster, Germany}
\email{zakhar.kabluchko@uni-muenster.de}


\title[Unitary Hermite polynomials]{Lee--Yang zeroes of the Curie--Weiss ferromagnet, unitary Hermite polynomials, and the backward heat flow}

\keywords{Curie--Weiss model; unitary Hermite polynomials; Lee--Yang zeroes; finite free probability; free multiplicative convolution; free unitary normal distribution; saddle-point method}

\subjclass[2010]{Primary: 82B20, 30C15; Secondary: 30C10, 60B10,
 46L54, 30F99, 30E15}

\begin{abstract}
The backward heat flow on the real line started from the initial condition $z^n$ results in the classical $n$-th Hermite polynomial whose zeroes are distributed according to  the Wigner semicircle law in the large $n$ limit.  Similarly, the backward heat flow with the periodic initial condition $(\sin \frac \theta 2)^n$ leads to  trigonometric or unitary analogues of the Hermite polynomials. These polynomials are closely related to  the partition function of the Curie--Weiss model and appeared in the work of Mirabelli on finite free probability. We relate the $n$-th unitary Hermite polynomial to the expected characteristic polynomial of a unitary random matrix obtained by running a Brownian motion on the unitary group $U(n)$. We identify the global  distribution of  zeroes of the unitary Hermite polynomials as the free unitary normal distribution.  We also compute the asymptotics of these polynomials or, equivalently,  the free energy of the Curie--Weiss model in a complex external field.
We identify the global distribution of the Lee--Yang zeroes of this model.
Finally, we  show that the backward heat flow applied to a high-degree real-rooted polynomial (respectively, trigonometric polynomial) induces, on the level of the asymptotic distribution of its roots, a  free Brownian motion (respectively, free unitary Brownian motion).
\end{abstract}

\maketitle

\section{Introduction}
\subsection{Hermite polynomials and their trigonometric analogues}

One possible way to define the classical (probabilist) Hermite polynomials $\He_0(z) = 1, \He_1(z) = z, \He_2(z)=z^2-1,\ldots$  is the formula
\begin{equation}\label{eq:hermite_poly_def}
\He_n(z)
=
\exp\left\{-\frac 1 2  \partial_z^2\right\} z^n
=
n! \sum_{m=0}^{[n/2]}  \frac{(-1)^m }{m!  2^m} \cdot \frac{z^{n-2m}}{(n-2m)!},
\qquad n\in \N_0,
\end{equation}
where $\partial_z$ denotes differentiation in $z$ and the exponential can be understood as an infinite series which terminates after finitely many non-zero summands.   More generally, we have
\begin{equation}\label{eq:eq:hermite_exp_on_z^n_with_sigma}
\exp\left\{-\frac{1}{2}\sigma^2 \partial_z^2\right\} z^n =  \sigma^n \He_n\left(\frac{z}{\sigma}\right), \qquad \sigma >0.
\end{equation}
This can be expressed by saying that the $n$-th Hermite polynomial arises when solving the \textit{backward} heat equation $\partial_t f(z; t) = -\frac 12 \partial^2_z f(z;t)$ on the real line with the initial condition $f(z;0) = z^n$.

We shall be interested in the trigonometric (or unitary) analogues of the Hermite polynomials. To introduce them, it will be convenient to adopt the following (somewhat unconventional) terminology. A trigonometric polynomial $T_n(\theta)$ of degree $n\in \N_0$ is an expression of the form
\begin{equation}\label{eq:algebraic_trigonometric}
T_n(\theta) = \frac{P_n(\eee^{\ii \theta})}{\eee^{\ii n \theta/2}},
\end{equation}
where $P_n(z)\in \C[z]$ is an algebraic polynomial of degree $n$ such that $P_n(0)\neq 0$.  For example, if $P_n(z) = (z-1)^n$, then $T_n(\theta) = (2\ii \sin \frac \theta 2)^n$.

In general,  $T_n(\theta)$ is a linear combination of the functions $\theta \mapsto \eee^{\ii (k-\frac n2) \theta}$, $k=0,\ldots, n$, with complex coefficients (such that the first and the last coefficient do not vanish). If $n=2d$ is even, then $T_n(\theta)$ can be represented as a linear combination of the functions $1, \sin \theta, \cos \theta,\ldots, \sin (d\theta), \cos (d\theta)$. This case corresponds to the usual definition of trigonometric polynomials. If $n=2d+1$ is odd, then $T_n(\theta)$ can be written as a linear combination of the functions $\sin (\ell \theta/2)$ and $\cos (\ell \theta/2)$ with $\ell = 1,3,5,\ldots, n$. This case is somewhat unconventional.
From the representation $P_n(z) = C \prod_{j=1}^n (z-z_j)$, where $z_1,\ldots, z_n\in \C\backslash\{0\}$ are the complex zeroes of $P_n$, one deduces the existence of a representation
$$
T_n(\theta) = C' \prod_{j=1}^n \sin \frac {\theta - \theta_j} 2
$$
where $\theta_1,\ldots,\theta_n\in \C$ are chosen to satisfy $\eee^{\ii \theta_j} = z_j\neq 0$.

We shall be interested in trigonometric polynomials with real roots only or, equivalently, in algebraic polynomials having all roots on the unit circle. If we  consider $(z-1)^n$ as the ``simplest'' algebraic polynomial of degree $n$ with this property, then the ``simplest'' real-rooted trigonometric polynomial of degree $n$ is $(\sin \frac \theta 2)^{n}$  (up to a constant factor).
To derive the trigonometric analogues of the Hermite polynomials $\He_n$, we look at the backward heat flow on $\R$ with the initial condition $(\sin \frac \theta 2)^{n}$. More precisely, we take some parameter $\sigma^2>0$, let $\partial_\theta$ be the differentiation operator in $\theta$ and consider, similarly to~\eqref{eq:eq:hermite_exp_on_z^n_with_sigma}, the expression
\begin{align}
T_n(\theta; \sigma^2) := \exp\left\{-\frac{1}{2}\sigma^2 \partial_\theta^2\right\} \left(\sin \frac \theta 2\right)^{n}
&=
(2\ii)^{-n} \exp\left\{-\frac{1}{2}\sigma^2 \partial_\theta^2\right\} \frac{(\eee^{\ii \theta} - 1)^n}{\eee^{\ii n \theta/2}}\notag\\
&=
(2\ii)^{-n} \exp\left\{-\frac{1}{2}\sigma^2 \partial_\theta^2\right\}\sum_{j=0}^n (-1)^{n-j} \binom nj \eee^{\ii \theta (j - \frac n2)}\notag\\
&=
(2\ii)^{-n} \sum_{j=0}^n (-1)^{n-j} \binom nj  \eee^{\frac 12 \sigma^2 \left(j-\frac n2\right)^2} \eee^{\ii \theta (j - \frac n2)}.\label{eq:exp_diff_sin_theta_power}
\end{align}
In the last line we used the identity  $\eee^{-\frac 12 \sigma^2 \partial_\theta^2} \eee^{\ii  c \theta} = \eee^{\frac 12 \sigma^2 c^2} \eee^{\ii c \theta}$. The algebraic polynomials corresponding to these trigonometric polynomials via~\eqref{eq:algebraic_trigonometric}  are given by
\begin{equation}\label{eq:hermite_poly_circ_def1}
H_n(z;\sigma^2)
=
\sum_{j=0}^n   (-1)^{n-j} \binom nj \exp\left\{ - \frac {\sigma^2 j (n-j)}{2}\right\} z^j,
\qquad
n\in \N,
\end{equation}
up to a multiplicative constant which was chosen to make $H_n(z;\sigma^2)$ monic.

If we view $t := \sigma^2$ as the time, then the trigonometric polynomial $T_n(\theta; t)$ satisfies the backward heat equation $\partial_t T_n(\theta; t) = -\frac 12 \partial_\theta^2 T_n(\theta; t)$, while $H_n(z;t)$ satisfies  the PDE
$$
\partial_t H_n(z; t) = - \frac 12  (z \partial_z) (n- z\partial_z) H_n(z;t),
\qquad
H_n(z; 0) = (z-1)^n,
$$
very similar to the heat-type PDE's that appeared in~\cite{tao_blog2} and~\cite[Section~2.3]{hall_ho}. Indeed, using the fact that $(z \partial_z) (n- z\partial_z)  z^j = j(n-j) \cdot  z^j$, we deduce
\begin{align*}
\exp\left\{-\frac t2 (z \partial_z) (n- z\partial_z)\right\} (z-1)^n
&=
\sum_{j=0}^n (-1)^{n-j} \binom nj \exp\left\{-\frac t2  (z \partial_z) (n- z\partial_z)\right\} z^j\\
&=
\sum_{j=0}^n (-1)^{n-j} \binom nj  \exp\left\{ - \frac {t}{2} j (n-j)\right\} z^j
= H_n(z;t).
\end{align*}

\subsection{Connection to finite free probability}
In the following, we shall refer to the polynomials $H_n(z; \sigma^2)$ defined by~\eqref{eq:hermite_poly_circ_def1} as \textit{unitary Hermite polynomials} with parameter $\sigma^2>0$. These polynomials appeared in the work of Mirabelli~\cite{mirabelli_diss} on  finite free probability, a theory developed by Marcus, Spielman, Srivastava~\cite{marcus_spielman_srivastava} and  Marcus~\cite{marcus}.
This theory studies the \textit{finite free additive convolution} $\boxplus_{n}$ and the \textit{finite free multiplicative convolution} $\boxtimes_{n}$ which are bilinear operations on the space of algebraic polynomials of degree at most $n$  defined~\cite{marcus,marcus_spielman_srivastava} as follows:
\begin{align}
\left(\sum_{i=0}^{n} \frac{\alpha_i}{i!} z^i \right)\boxplus_{n}\left(\sum_{j=0}^{n} \frac{\beta_j}{j!} z^j \right)
&=
\frac 1 {n!} \sum_{\ell=0}^{n}  \frac{z^\ell}{\ell!} \sum_{\substack{i,j\in \{0,\ldots, n\}\\ i+j=n+\ell}} \alpha_i \beta_j, \label{eq:finite_free_add_conv_def}\\
\left(\sum_{j=0}^{n} \alpha_j z^j \right)\boxtimes_{n}\left(\sum_{j=0}^{n} \beta_j z^j \right)
&=
\sum_{j=0}^{n} (-1)^{n-j}  \frac{\alpha_j \beta_j}{\binom {n}{j}} z^j. \label{eq:finite_free_mult_conv_def}
\end{align}

It has been shown in~\cite{marcus,marcus_spielman_srivastava,mirabelli_diss} that there is an analogue of the central limit theorem for these convolutions (for every fixed $n\in \N$). The classical Hermite polynomials play the role of the normal distribution for $\boxplus_n$; see Theorem~6.7 in~\cite{marcus} and Theorems~3.2, 3.5 in~\cite{mirabelli_diss}. Similarly, the unitary Hermite polynomials $H_n(z; \sigma^2/(n-1))$ play the role of the normal distribution for $\boxtimes_n$; see Theorems~3.16, 3.23, 3.32 in~\cite{mirabelli_diss}. For example, the analogue of the de Moivre-Laplace theorem for $\boxtimes_{n}$ is as follows. Fix some even number $n = 2d$ and consider degree $2d$ polynomials
$$
Q_N(z) := (z^2 -2 z \cos (\sigma/\sqrt N) +1)^d, \qquad N\in \N,
$$
having two zeroes at $\eee^{\ii \sigma/ \sqrt N}$ and $\eee^{-\ii \sigma/ \sqrt N}$, both of multiplicity $d$. Then, it can be shown that
$$
\lim_{N\to\infty} \underbrace{Q_N(z) \boxtimes_{2d} \ldots \boxtimes_{2d} Q_N(z)}_{N \text{ times}} = H_{2d}\left(z; \frac{\sigma^2}{2d-1}\right).
$$
The operations $\boxplus_n$ (respectively, $\boxtimes_n$) are known (in a suitable sense) to converge, as $n\to\infty$, to the classical free additive (respectively, multiplicative) convolutions $\boxplus$, respectively, $\boxtimes$. For information on (infinite) free probability we refer to~\cite{voiculescu_nica_dykema_book} and~\cite{nica_speicher_book}, for its finite counterpart to~\cite{marcus,marcus_spielman_srivastava,arizmendi_perales,arizmendi_garza_vargas_perales,mirabelli_diss}.

\subsection{Connection to the Curie--Weiss model}
It has been observed by Mirabelli~\cite[Section~3.2.5]{mirabelli_diss} that the unitary Hermite polynomials are closely related to the \textit{Curie--Weiss model} (or the Ising model on the complete graph), which is one of the simplest models of statistical mechanics.
The partition function of the Curie--Weiss model at inverse temperature $\beta>0$ and with external magnetic field $h\in \R$ is given by
\begin{equation}\label{eq:curie_weiss_part_funct_def}
Z_n(\beta, h) = \sum_{(\sigma_1,\ldots, \sigma_n) \in \{\pm 1\}^n} \eee^{\frac {\beta}{2n} \left(\sum_{k=1}^n \sigma_k\right)^2 + h \sum_{k=1}^n \sigma_k}.
\end{equation}
For every $j\in \{0,\ldots,n\}$  there exist $\binom nj$ configurations $(\sigma_1,\ldots, \sigma_n)$ in which the number of $+1$'s is $j$. Since for every such configuration we have $\sum_{k=1}^n \sigma_k = 2j - n$,  the above partition function can be written as
\begin{equation}\label{eq:curie_weiss_part_funct_reduction_hermite}
Z_n(\beta, h)
=
\sum_{j=0}^n \binom n j \eee^{\frac {\beta}{2n} (2j-n)^2 + h(2j - n)}
=
H_n \left(-\eee^{2h}; \frac {4\beta}{n}\right) \cdot \eee^{n(\frac \beta 2 - h)} \cdot (-1)^n.
\end{equation}
The fact that various quantities related to $Z_n(\beta, h)$  satisfy PDE's (the heat equation, the Burgers equation and a Hamilton--Jacobi equation) is known since Newman~\cite{newman_perc_theory}; see also~\cite{dominguez_mourrat,dascaliuc}.

The behavior of the Curie--Weiss model at \textit{real} parameters $\beta$ and $h$ is very well understood; see~\cite{ellis_newman,ellis_newman_stat_CW,ellis_newman_rosen} as well as the books by Ellis~\cite[Sections~IV.4, V.9]{ellis_book} and Friedli and Velenik~\cite[Chapter~2]{friedli_velenik_book}. For a recent approach using the theory of mod-$\phi$-convergence we refer to~\cite{meliot_nikeghbali_stat_mech}.

The behavior at \textit{complex} parameters $\beta$ and $h$, and in particular the location of the complex zeroes of $Z_n(\beta, h)$ is also of interest and has attracted attention in the theoretical physics literature~\cite{glasser_etal,Krasnytska,deger_flindt,deger_brange_flindt}; see also the recent paper~\cite{newman_motion_yee_yang}. These authors were motivated by the Lee--Yang program~\cite{lee_yangI,lee_yangII,fisher} which relates phase transitions to the complex zeroes of the partition function.  Shamis and Zeitouni~\cite{shamis_zeitouni} analyzed the partition function and its zeroes  at complex $\beta$ (with $h=0$) in a small neighborhood of the critical value $\beta = 1$, while the behavior outside this neighborhood remains largely unknown. The results of the present paper clarify the asymptotic behavior of $Z_n(\beta, h)$ at complex $h$ (with fixed real $\beta>0$) and, in particular, identify the global limiting distribution of the complex zeroes of $Z_n(\beta, h)$. Thus, we analyse the so-called Lee--Yang zeroes in contrast to the Fisher zeroes analyzed in~\cite{shamis_zeitouni}.
Partition function zeroes of the Ising model on Cayley trees and hierarchical lattices have been studied in~\cite{bleher_etal_I,bleher_etal_II,chio_etal}, where pointers to earlier literature and a discussion of conjectures on the Ising model on $\Z^d$ can be found.

\subsection{Summary of results}
The main results of the present paper, to be stated in Section~\ref{sec:main_results}, can be summarized as follows:
\begin{itemize}
\item[(a)] We prove that the empirical distribution of zeroes of $H_n(z; \sigma^2/n)$ converges weakly on the unit circle to the free unitary normal distribution $\fnorm_{\sigma^2}$, thereby identifying the limiting distribution of the Lee--Yang zeroes of the Curie--Weiss model; see Theorem~\ref{theo:weak_conv_zeroes_to_free_normal} and Corollary~\ref{cor:curie_weiss_zeroes}.
\item[(b)]  We compute the asymptotics of $H_n (z;\sigma^2/n)$ for complex $z$ with $|z|\neq 1$, thereby identifying the free energy of the Curie--Weiss model with complex external field; see Theorem~\ref{theo:hermite_unitary_asympt} and Corollary~\ref{cor:curie_weiss_free_energy}.
\item [(c)] It is well known~\cite[Section~6.1.2]{potters_bouchaud_book} that the expected characteristic polynomial of a Wigner random matrix of size $n$ coincides with the $n$-th classical Hermite polynomial. We prove a unitary analogue of this result. More precisely, let $(U_n(t))_{t\geq 0}$ be a Brownian motion on the unitary group $U(n)$ such that $U_n(0)=I_n$ is the identity matrix. In Theorem~\ref{theo:expected_char_poly_unitary_BM} we show that
    $$
    \E \det (x I_n - U_n(t))= \eee^{-nt/2} H_n(\eee^{t/2} x; t), \qquad t\geq 0.
    $$
    An extension to Brownian motion starting at an arbitrary  unitary matrix is given in Corollary~\ref{cor:unitary_BM}.
\item[(d)] Consider a high-degree polynomial (respectively, trigonometric polynomials) whose roots are real and have certain asymptotic distribution. We show that applying the backward heat flow to the polynomial is equivalent, on the level of the asymptotic distribution of the roots, to starting a free Brownian motion (respectively, a free unitary Brownian motion) from the initial distribution of roots; see Theorems~\ref{theo:heat_flow_zeroes_algebraic} and~\ref{theo:heat_flow_zeroes}.
\item [(e)]  We review the properties of the free unitary normal distribution $\fnorm_{\sigma^2}$ and derive some new ones; see Theorem~\ref{theo:properties_free_normal} and Proposition~\ref{prop:free_unitary_normal_conv_wigner}.
\end{itemize}

\subsection*{Notation}
Throughout the paper, $\bD = \{z\in \C:\, |z| <1\}$ denotes the open unit disk, $\bT = \{z\in \C:\, |z|=1\}$ the unit circle, and $\bH = \{\theta \in \C:\, \Im \theta >0\}$ the upper half-plane. The closures of $\bD$ and $\bH$ are denoted by $\overline\bD$ and $\overline \bH$, respectively. We write $a_n\sim b_n$ if $a_n/b_n\to 1$ as $n\to\infty$.  Weak and vague convergence of measures are denoted by $\overset{w}\longrightarrow$ and $\overset{v}\longrightarrow$, respectively.

\section{Main results}\label{sec:main_results}

\subsection{Empirical distribution of zeroes}
The \textit{empirical distribution of zeroes} of an algebraic polynomial $P_n(z)$ of degree $n$, i.e.\ the probability measure  assigning to each zero the same weight $1/n$, will be denoted by
\begin{equation}\label{eq:empirical_distr_zeroes_def}
\mu\lsem P_n \rsem := \frac 1n \sum_{\substack{z\in \C:\, P_n(z) = 0}} \delta_z.
\end{equation}
We agree that the roots are always counted with multiplicities.
It is well known, see, e.g, \cite{ullman0,gawronski,kornyik_michaletzky}, that the empirical distribution of zeroes of the classical Hermite polynomial $\He_n(z\sqrt {n})$ converges weakly to the Wigner distribution $\gamma_{0,2}$ with the density $x\mapsto \frac 1 {2\pi} \sqrt{4-x^2}$ on the interval $[-2,2]$, namely
$$
\mu\lsem \He_n(\,\cdot\, \sqrt {n}) \rsem = \frac 1n \sum_{\substack{z\in \C:\, \He_n(z) = 0}} \delta_{z/\sqrt n} \toweak \gamma_{0,2}.
$$
Given that the Wigner law is the analogue of the normal distribution w.r.t.\ the free additive convolution $\boxplus$, one may conjecture that the limiting empirical distribution of zeroes of the \textit{unitary} Hermite polynomials should be related to the analogue of the normal distribution w.r.t.\ the free \textit{multiplicative} convolution $\boxtimes$.   We shall confirm this intuition. We begin by recording the following important property.

\begin{lemma}\label{lem:no_zeroes_in_D}
All zeroes of the polynomial $H_n(z;\sigma^2)$ are located on the unit circle $\bT = \{|z|=1\}$.  
\end{lemma}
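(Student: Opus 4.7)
I would prove the lemma by induction on $n$, combining two short algebraic identities with Cohn's classical theorem on self-inversive polynomials.

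First, I would verify that $H_n(\cdot;\sigma^2)$ is self-inversive. Writing $c_j := (-1)^{n-j}\binom{n}{j}\exp(-\sigma^2 j(n-j)/2)$ for its coefficients, the identities $\binom{n}{n-j}=\binom{n}{j}$ and $(n-j)\bigl(n-(n-j)\bigr) = j(n-j)$ immediately give $c_{n-j} = (-1)^n c_j$, so
\[
z^n H_n(1/z;\sigma^2) = (-1)^n H_n(z;\sigma^2).
\]
Since the coefficients are real, this is the standard self-inversivity condition with unimodular constant.

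Next I would derive a recursion for the derivative. Using $j\binom{n}{j}=n\binom{n-1}{j-1}$ and the factorization $(k+1)(n-1-k) = k(n-1-k) + (n-1-k)$ to split the Gaussian weight, a direct computation from \eqref{eq:hermite_poly_circ_def1} yields
\[
H_n'(z;\sigma^2) = n\, e^{-\sigma^2(n-1)/2}\, H_{n-1}\bigl(e^{\sigma^2/2} z;\sigma^2\bigr).
\]
Thus the critical points of $H_n(\cdot;\sigma^2)$ are obtained from the zeros of $H_{n-1}(\cdot;\sigma^2)$ by a contraction by the strictly subunital factor $e^{-\sigma^2/2} < 1$.

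The induction is then immediate. The base case $n=1$ reduces to $H_1(z;\sigma^2) = z - 1$, whose unique root lies on $\bT$. For the inductive step, assuming all zeros of $H_{n-1}(\cdot;\sigma^2)$ lie on $\bT$, the derivative identity places all $n-1$ critical points of $H_n(\cdot;\sigma^2)$ on the circle $\{|z| = e^{-\sigma^2/2}\} \subset \bD$, hence in particular in $\overline{\bD}$. Combined with self-inversivity, Cohn's theorem — a polynomial of degree $n$ has all its zeros on $\bT$ whenever it is self-inversive and all its critical points lie in $\overline{\bD}$ — gives the conclusion for $H_n$.

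The only non-routine ingredient is Cohn's criterion; the two algebraic identities are elementary once the right form of the derivative recursion is guessed, and the inductive bookkeeping (the rescaling by $e^{\sigma^2/2}$) is transparent. An alternative route would be to exhibit $H_n(\cdot;\sigma^2)$ as an iterated $\boxtimes_n$-convolution of polynomials $z^2 - 2(\cos\alpha)z+1$ via the $\boxtimes_n$ central limit theorem sketched in the introduction, but this would require establishing separately that $\boxtimes_n$ preserves roots on $\bT$, which is an equally nontrivial fact, so I would prefer the direct Cohn-theoretic argument above.
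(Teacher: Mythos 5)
Your proof is correct, and it takes a genuinely different route from the paper. The paper disposes of the lemma by citation: it observes that $H_n(z;\sigma^2)$ is (up to normalization) a Lee--Yang partition function with uniform couplings $x_{\alpha\beta}=\eee^{-\sigma^2/2}$, so the Lee--Yang circle theorem applies, or alternatively that the trigonometric polynomial $\exp\{-\tfrac12\sigma^2\partial_\theta^2\}(\sin\tfrac\theta2)^n$ is real-rooted by the P\'olya--Benz theorem. You instead give a self-contained induction: the coefficient symmetry $c_{n-j}=(-1)^n c_j$ (which the paper also records, in the remark $z^nH_n(1/z;\sigma^2/n)=(-1)^nH_n(z;\sigma^2/n)$) gives self-inversivity, and your derivative identity $H_n'(z;\sigma^2)=n\,\eee^{-\sigma^2(n-1)/2}H_{n-1}(\eee^{\sigma^2/2}z;\sigma^2)$ — which I checked via $(k+1)\binom{n}{k+1}=n\binom{n-1}{k}$ and $(k+1)(n-1-k)=k(n-1-k)+(n-1-k)$ — places all critical points of $H_n$ on the circle $\{|z|=\eee^{-\sigma^2/2}\}\subset\bD$ once the inductive hypothesis puts the zeros of $H_{n-1}$ on $\bT$; Cohn's criterion for self-inversive polynomials then closes the induction, with the base case $H_1(z;\sigma^2)=z-1$. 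What each approach buys: yours is elementary and makes visible a pleasant finite-$n$ structure (the backward heat flow dilates the circle carrying the critical points, a discrete shadow of the root dynamics studied later in the paper), at the cost of invoking Cohn's theorem, which you should cite precisely (e.g.\ Rahman--Schmeisser or Cohn's 1922 paper), since the "self-inversive plus critical points in $\overline\bD$ implies zeros on $\bT$" direction is exactly the nontrivial half; the paper's argument is two lines and, more importantly for its purposes, the P\'olya--Benz route extends verbatim to arbitrary real-rooted (trigonometric) initial data, which is what is needed for the heat-flow theorems later on, whereas your recursion is special to the initial condition $(\sin\tfrac\theta2)^n$.
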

\begin{proof}
The claim is a special case of the Lee--Yang theorem; see~\cite[Appendix~II]{lee_yangII} (where one takes $x_{\alpha \beta} := \eee^{-\sigma^2/2}$ for all $\alpha, \beta =1,\ldots,n$) or~\cite[Section~5.1]{ruelle_book}.
Alternatively, the claim can be deduced from the P\'olya--Benz theorem~\cite[Theorem~1.2]{aleman_beliaev_hedenmalm} applied to the periodic function $f(\theta) =  (\sin \frac \theta 2)^{n}$ and the differential operator $\exp\{-\frac{1}{2}\sigma^2 \partial_\theta^2\}$ (see the remarks preceding Corollary~1.3 in~\cite{aleman_beliaev_hedenmalm} regarding applicability to non-polynomials). The P\'olya--Benz theorem implies that all zeroes of $\exp\{-\frac{1}{2}\sigma^2 \partial_\theta^2\}(\sin \frac \theta 2)^{n}$ are real.  Recalling~\eqref{eq:exp_diff_sin_theta_power} completes the proof.
\end{proof}

\begin{figure}[t]
	\centering
	\includegraphics[width=0.22\columnwidth]{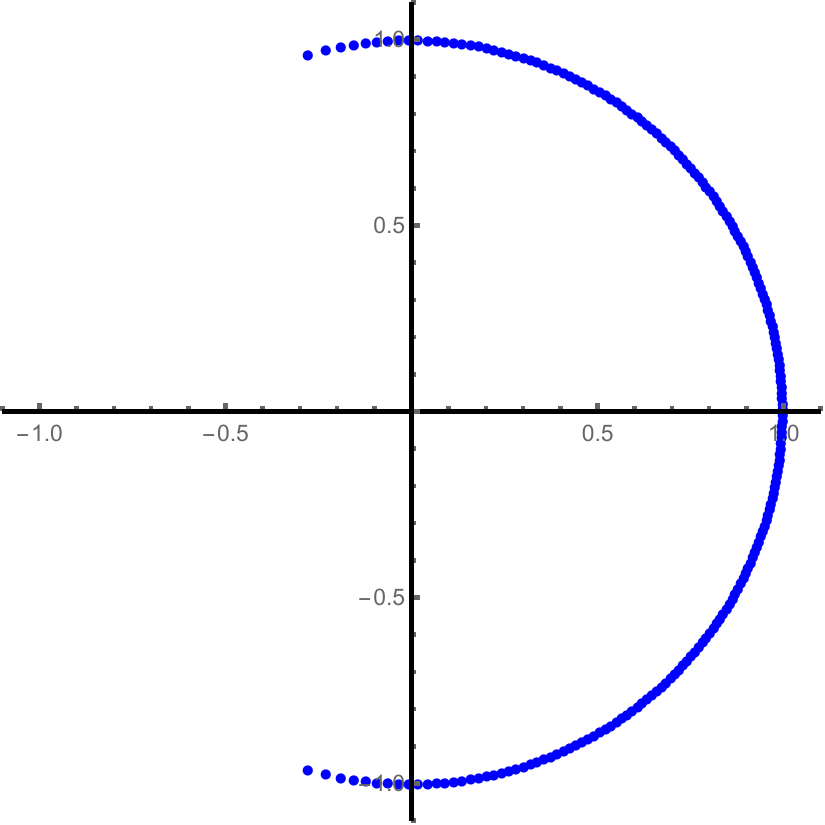}
	\includegraphics[width=0.22\columnwidth]{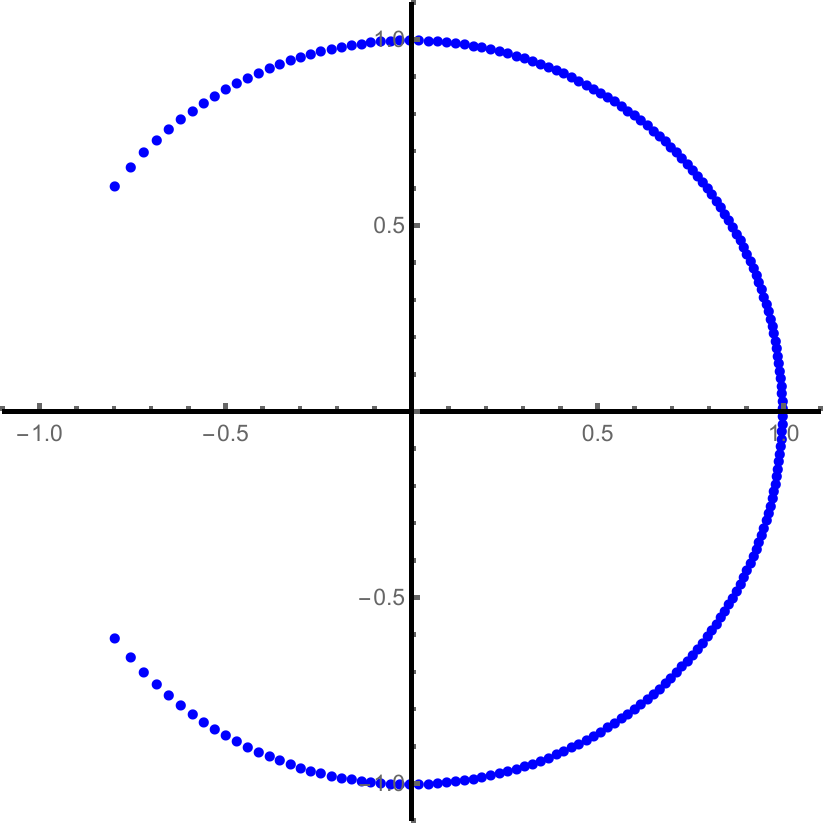}
	\includegraphics[width=0.22\columnwidth]{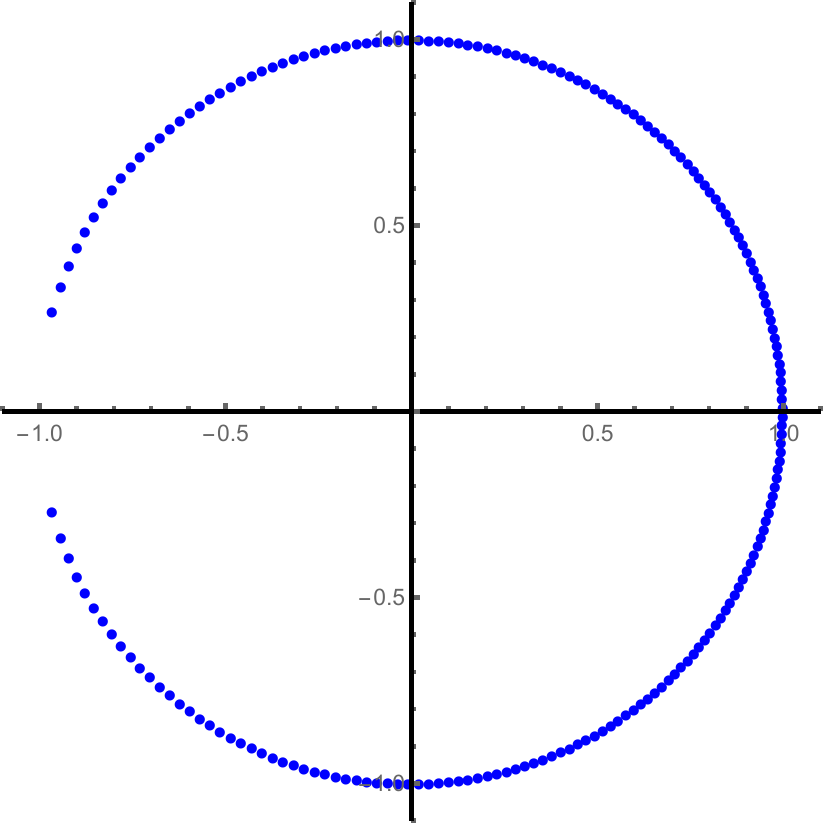}
	\includegraphics[width=0.22\columnwidth]{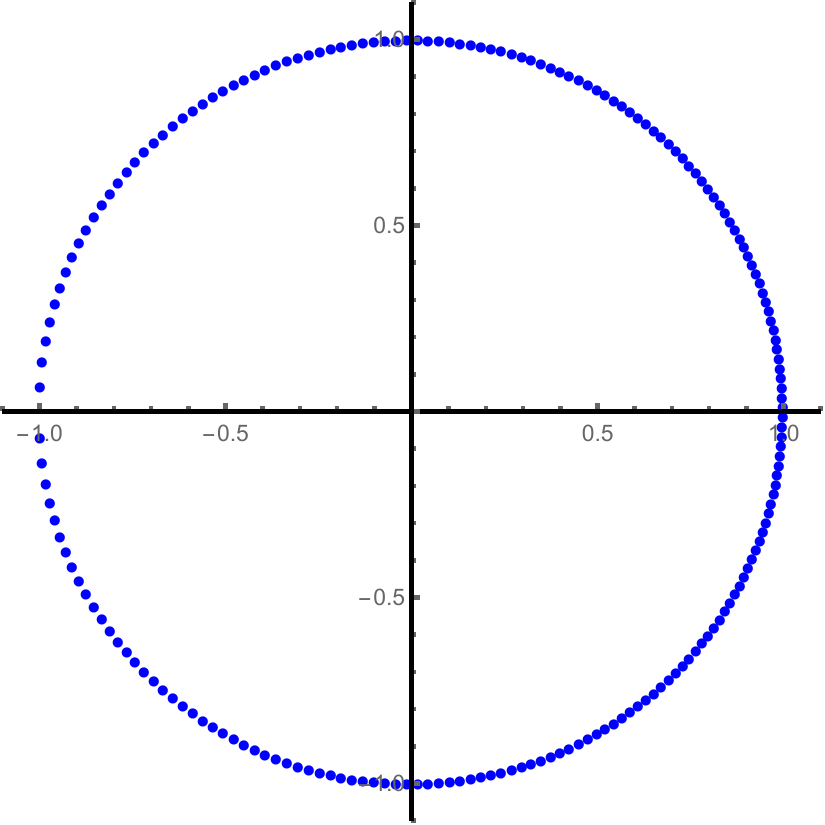}
	\caption{Zeroes of the unitary Hermite polynomials $H_n(z; \sigma^2/n)$ with $\sigma^2= 1,2,3,4$. The degree is $n=200$.}
\label{fig:zeroes_hermite}
\end{figure}

The empirical distribution of zeroes of $H_n(z; \sigma^2/n)$ will be denoted by
\begin{equation}\label{eq:Psi_mu_n_sigma^2}
\mu_{n;\sigma^2} := \mu\lsem H_n(\,\cdot\,; \sigma^2/n) \rsem =  \frac 1n \sum_{z\in \bT:\,  H_n(z; \sigma^2/n) = 0} \delta_z.
\end{equation}
\begin{theorem}\label{theo:weak_conv_zeroes_to_free_normal}
Fix some $\sigma^2>0$. Then, as $n\to\infty$, the probability measures $\mu_{n; \sigma^2}$ converge weakly on $\bT$ to the free unitary normal distribution $\fnorm_{\sigma^2}$ with parameter $\sigma^2$; see Section~\ref{sec:free_unitary_normal} for its definition and properties.
\end{theorem}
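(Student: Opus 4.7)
The plan is to combine the pointwise asymptotics of $H_n(z;\sigma^2/n)$ away from the unit circle (Theorem~\ref{theo:hermite_unitary_asympt}) with the uniqueness of a compactly supported probability measure determined by its logarithmic potential off its support. Since $H_n(z;\sigma^2/n)$ is monic and, by Lemma~\ref{lem:no_zeroes_in_D}, has all its roots on the compact set $\bT$, the family $\{\mu_{n;\sigma^2}\}_{n\ge 1}$ is automatically tight, and for every $z\in\C$,
$$U_{\mu_{n;\sigma^2}}(z):=\int_{\bT}\log|z-w|\,d\mu_{n;\sigma^2}(w)=\frac{1}{n}\log|H_n(z;\sigma^2/n)|.$$
Theorem~\ref{theo:hermite_unitary_asympt} then provides a function $\Phi(\cdot\,;\sigma^2)$, harmonic on each component of $\C\setminus\bT$, such that the right-hand side converges pointwise to $\Phi(z;\sigma^2)$ for every $|z|\neq 1$.

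Given any weak subsequential limit $\mu_*$ of $\{\mu_{n;\sigma^2}\}$ (necessarily supported on $\bT$), the integrand $w\mapsto\log|z-w|$ is bounded and continuous on $\bT$ for each fixed $z$ with $|z|\neq 1$, so by the portmanteau theorem $U_{\mu_*}(z)=\Phi(z;\sigma^2)$ for such $z$. By the unicity principle for logarithmic potentials (any probability measure on $\bT$ is recovered from its potential off $\bT$ via the distributional Laplacian, and $\bT$ has planar Lebesgue measure zero), all subsequential weak limits coincide, so $\mu_{n;\sigma^2}$ converges weakly on $\bT$ to a unique limit $\mu_\infty$.

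To identify $\mu_\infty$ with $\fnorm_{\sigma^2}$, it suffices to compare Cauchy transforms on $\{|z|>1\}$: writing $\Phi(z;\sigma^2)=\Re F(z;\sigma^2)$ for a holomorphic $F$, one has $G_{\mu_\infty}(z)=F'(z;\sigma^2)$, and one checks that this matches the Cauchy transform of $\fnorm_{\sigma^2}$ via Biane's subordination equation for free unitary Brownian motion at time $\sigma^2$ (equivalently, one matches the moments $\int_{\bT}z^k\,d\fnorm_{\sigma^2}(z)$ against Biane's explicit formula). The main obstacle is the saddle-point analysis underlying Theorem~\ref{theo:hermite_unitary_asympt}: the summand in
$$H_n(z;\sigma^2/n)=\sum_{j=0}^n(-1)^{n-j}\binom{n}{j}\exp\left\{-\frac{\sigma^2 j(n-j)}{2n}\right\}z^j$$
combines a concave quadratic exponent in $j$ with an alternating binomial factor and the power $z^j$, so the dominant saddle wanders off the real axis in a manner depending on both $|z|$ and $\sigma^2$; a careful contour deformation is needed, with case distinctions notably across the phase transition at $\sigma^2=4$, where the support of $\fnorm_{\sigma^2}$ begins to fill the whole circle.
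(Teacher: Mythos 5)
Your potential-theoretic frame is sound as far as it goes: since $H_n(\cdot\,;\sigma^2/n)$ is monic with all zeroes on $\bT$ (Lemma~\ref{lem:no_zeroes_in_D}), $\frac1n\log|H_n(z;\sigma^2/n)|$ is indeed the logarithmic potential of $\mu_{n;\sigma^2}$, tightness is automatic, and two probability measures on $\bT$ whose potentials agree off $\bT$ coincide. The genuine gap is in what you feed into this machine. Theorem~\ref{theo:hermite_unitary_asympt} is not an available input: in the paper it is proved \emph{after}, and \emph{by means of}, Theorem~\ref{theo:weak_conv_zeroes_to_free_normal} (the locally uniform convergence on all of $\bD$ is extracted from the already established weak convergence of $\mu_{n;\sigma^2}$ via the $\psi$-transform), so invoking it here is circular. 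The only asymptotics established independently are those of Propositions~\ref{prop:log_der_limit_small_r} and~\ref{prop:log_asympt_H_n_small_z}, valid on a small disk $|z|\le r(\sigma^2)$ only, obtained from the Hubbard--Stratonovich representation (Lemma~\ref{lem:hubbard_stratonovich}) by a saddle-point argument whose admissible contour is constructed only for small $|z|$ (the paper explicitly conjectures, but does not prove, its existence for all $z\in\bD$). You flag this saddle-point analysis as ``the main obstacle'' and do not carry it out; but that analysis \emph{is} the analytic content of the theorem, so what you have is a conditional reduction rather than a proof. (Your anticipated case distinction at $\sigma^2=4$ and global contour deformation are also not how the difficulty is resolved: the paper never needs the asymptotics near or on $\bT$; it works in the small-disk regime and upgrades by soft arguments.)

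The identification step is likewise asserted rather than proved: ``one checks that this matches Biane's subordination equation'' is exactly the computation the paper performs in Lemma~\ref{lem:psi_normal}, where, starting from the defining relation $S_{\fnorm_{\sigma^2}}(z)=\eee^{\sigma^2(z+\frac12)}$, the $\psi$-transform is inverted to get $\psi_{\fnorm_{\sigma^2}}(-\eee^{\ii\theta})=-\frac\ii2\tan\zeta_{\sigma^2/4}(\theta/2)-\frac12$; matching against Biane's $\nu_t$ would in addition require checking the time normalization. Without this, even granting the asymptotics, you have only shown convergence to \emph{some} measure with the limiting potential, not to $\fnorm_{\sigma^2}$. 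If you wish to keep the potential-theoretic route, note that the small-disk asymptotics would in fact suffice: the potential of any subsequential limit is harmonic on $\bD$, so agreement on a small disk propagates to all of $\bD$, and the reflection identity $U_\mu(1/\bar z)=U_\mu(z)-\log|z|$ for measures on $\bT$ extends it to $\{|z|>1\}$, after which your unicity argument applies; but you would still need Proposition~\ref{prop:log_der_limit_small_r} (the saddle-point work) and a Lemma~\ref{lem:psi_normal}-type computation to name the limit, which is essentially the paper's proof in different clothing.
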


Attempting to prove Theorem~\ref{theo:weak_conv_zeroes_to_free_normal} by the method of moments leads to non-trivial combinatorics.  After the preprint version of this paper appeared, such proof has been given in~\cite{arizmendi_fujie_ueda_new_comb}.

\subsection{Asymptotics of unitary Hermite polynomials}
Our asymptotic results on the polynomials $H_n(z;\sigma^2/n)$ will be stated in terms of certain analytic function $\zeta_t(\theta)$ that satisfies
$$
\zeta_t(\theta) - t \tan \zeta_t(\theta) = \theta,
$$
where $t>0$ is a parameter and $\theta$ is a complex variable satisfying $\Im \theta >0$.  This function is related to the free unitary Poisson distribution, as has been shown in~\cite{kabluchko_rep_diff_free_poi}, and to the free unitary normal distribution, which will be demonstrated in Section~\ref{sec:free_unitary_normal} below.    The next theorem summarizes the main properties of this function; see~\cite[Section~4]{kabluchko_rep_diff_free_poi} for proofs and further properties.
\begin{theorem}\label{theo:riemann_surface_z_i}
Fix $t > 0$. Let $\bH := \{\theta\in \C:\, \Im \theta > 0\}$ be the upper half-plane. For every  $\theta\in \bH$,  the equation $\zeta - t \tan \zeta = \theta$ has a unique, simple solution $\zeta= \zeta_t(\theta)$ in $\bH$.  The function $\zeta_t:\bH\to \bH$ is analytic on $\bH$, admits a continuous extension to the closed upper half-plane $\overline \bH$, and satisfies
\begin{equation}\label{eq:zeta_properties}
\zeta_t(\theta + \pi ) = \zeta_t(\theta) + \pi,
\qquad
\zeta_t(-\bar \theta) = -\overline{\zeta_t(\theta)},
\qquad
\Im \zeta_t(\theta) > \Im \theta,
\end{equation}
for all $\theta\in \bH$. Locally uniformly in $x\in \bH$ we have
$$
\zeta_t(\theta) = \lim_{n\to\infty} \underbrace{(\theta + t \tan (\theta + t \tan (\ldots (\theta + t \tan x) \ldots )))}_{n \text{ iterations}},
\qquad
\theta \in \overline \bH.
$$
Finally, we have
\begin{equation}\label{eq:zeta_limit_behavior}
\zeta_{t} (\theta) -\theta \to  \ii t
\quad
\text{ as }
\quad
\Im \theta\to +\infty
\quad
\text{ uniformly in } \Re \theta\in \R.
\end{equation}
\end{theorem}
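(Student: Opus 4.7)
The plan is to handle existence and uniqueness of the branch $\zeta_t:\bH\to\bH$ via a Denjoy--Wolff fixed-point argument applied to $T_\theta(\zeta):=\theta+t\tan\zeta$, then read off the algebraic properties from the defining equation, and finally address the boundary extension separately. First, $T_\theta$ is a holomorphic self-map of $\bH$: the identity $\Im\tan(x+iy)=\sinh(2y)/(\cos(2x)+\cosh(2y))$ gives $\Im\tan\zeta>0$ on $\bH$, hence $T_\theta(\bH)\subset\{\Im w>\Im\theta\}\subset\bH$. Since $T_\theta$ is plainly not a Möbius automorphism of $\bH$, the Denjoy--Wolff theorem applies and produces a unique point in $\overline\bH$ to which all iterates converge locally uniformly. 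The uniform limit $\tan\zeta\to i$ as $\Im\zeta\to+\infty$ keeps the iterates bounded and rules out $\infty$ as the Denjoy--Wolff point; the poles of $\tan$ on $\tfrac{\pi}{2}+\pi\Z$ together with the non-real value of $\theta$ rule out finite real limits as well (any such would have to satisfy $p-t\tan p=\theta$, which is impossible for real $p$). Hence $T_\theta$ has a unique fixed point $\zeta_t(\theta)\in\bH$, which simultaneously gives existence of the solution, uniqueness in $\bH$, and the iteration formula of the theorem.

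Next, by Schwarz--Pick the attracting Denjoy--Wolff point of a non-automorphism satisfies $|T_\theta'(\zeta_t(\theta))|=|t\sec^2\zeta_t(\theta)|<1$, so $F'(\zeta_t(\theta))=1-t\sec^2\zeta_t(\theta)\ne 0$ for $F(\zeta):=\zeta-t\tan\zeta$. This makes $\zeta_t(\theta)$ a simple zero of $F(\,\cdot\,)-\theta$, and the implicit function theorem delivers analyticity of $\theta\mapsto\zeta_t(\theta)$ on $\bH$. The inequality $\Im\zeta_t(\theta)>\Im\theta$ is immediate from $\zeta_t(\theta)-\theta=t\tan\zeta_t(\theta)$ together with $\Im\tan>0$ on $\bH$. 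The two identities in~\eqref{eq:zeta_properties} follow from $\tan(\zeta+\pi)=\tan\zeta$ and the Schwarz-reflection relation $\tan(-\bar\zeta)=-\overline{\tan\zeta}$, combined with uniqueness of the branch.

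The asymptotic~\eqref{eq:zeta_limit_behavior} is then a one-line consequence: because $\Im\zeta_t(\theta)\ge\Im\theta\to+\infty$ and $\tan w\to i$ uniformly in $\Re w$ as $\Im w\to+\infty$, the identity $\zeta_t(\theta)-\theta=t\tan\zeta_t(\theta)$ forces $\zeta_t(\theta)-\theta\to it$.

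The main obstacle I anticipate is the continuous extension of $\zeta_t$ to $\overline\bH$. Away from the discrete set of real $\theta_0$ at which the selected branch meets a critical point of $F$ or a pole of $\tan$, extension by the implicit function theorem is routine, and the periodicity $\zeta_t(\theta+\pi)=\zeta_t(\theta)+\pi$ reduces the analysis to a single fundamental strip. At the exceptional real $\theta_0$, one must select the correct boundary value by a local monodromy analysis, consistent with the iteration scheme initialised at a base point on $\overline\bH$. This is the delicate step, and in~\cite{kabluchko_rep_diff_free_poi} it is handled through a detailed Riemann-surface description of $F^{-1}$.
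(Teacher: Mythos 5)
Your Denjoy--Wolff/Schwarz--Pick scheme correctly settles everything that happens strictly inside $\bH$. The map $T_\theta(\zeta)=\theta+t\tan\zeta$ is indeed a holomorphic self-map of $\bH$ with $T_\theta(\bH)\subset\{\Im w>\Im\theta\}$, it is not an automorphism (it is not even injective, since $\tan$ is $\pi$-periodic), so the Denjoy--Wolff point exists; your exclusion of $\infty$ and of real limit points is sound, though it can be tightened: every iterate after the first lies in $\{\Im w>\Im\theta\}$, where $\tan$ is bounded, so the orbit stays in a compact subset of $\bH$ — this rules out both cases at once, and avoids your parenthetical argument, which is slightly off because a boundary Denjoy--Wolff point need not satisfy the fixed-point equation literally. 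The Schwarz--Pick inequality $|t\sec^2\zeta_t(\theta)|<1$ giving $F'(\zeta_t(\theta))\neq 0$, hence simplicity and analyticity via the implicit function theorem, the two symmetries in~\eqref{eq:zeta_properties}, $\Im\zeta_t(\theta)>\Im\theta$, and the limit~\eqref{eq:zeta_limit_behavior} are all correct. Note that this is a genuinely different route from the one the paper relies on: the paper gives no proof of this theorem but quotes it from~\cite[Section~4]{kabluchko_rep_diff_free_poi}, where the branch is constructed by the inverse function theorem for large $\Im\theta$, shown unique by a Rouch\'e argument, and then continued analytically after an explicit determination of the ramification points of the Riemann surface of $\zeta\mapsto\zeta-t\tan\zeta$. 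Your fixed-point argument is more self-contained for the interior statements and yields the iteration formula (for $\theta\in\bH$) for free.

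The genuine gap is the continuous extension of $\zeta_t$ to $\overline\bH$ — and with it the iteration formula, which the theorem asserts for all $\theta\in\overline\bH$, including real $\theta$. You flag this honestly, but nothing in your argument delivers it: Denjoy--Wolff gives no control as $\Im\theta\downarrow 0$, and ``extension by the implicit function theorem away from exceptional points'' presupposes exactly what must be proved, namely that $\zeta_t(\theta')$ has a limit as $\theta'\to\theta_0\in\R$ and that this limit avoids both the critical points of $F$ and the poles of $\tan$; one must also decide, for each real $\theta_0$, whether the limit lies in $\bH$ or on $\R$ (both occur: for $t<1$ the branch points of the surface have real $\theta$-coordinates, these are the edges of the support where the boundary behavior is of square-root type, and cube-root type at $t=1$, cf.\ Theorem~\ref{theo:properties_free_normal}). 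For real $\theta$ one must additionally identify the Denjoy--Wolff point of $T_\theta$ (which may now sit on $\partial\bH$) with that boundary value. So, as written, the proposal proves the interior portion of the theorem by a new and cleaner argument, but the boundary-extension claim is left to~\cite[Section~4]{kabluchko_rep_diff_free_poi} — the same source the paper itself cites for the entire statement.
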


The next theorem is our second main result.
\begin{theorem}\label{theo:hermite_unitary_asympt}
Locally uniformly in $\theta \in \bH$ we have
\begin{align}
\lim_{n\to\infty} \frac 1n \log \frac{H_n(-\eee^{\ii \theta};\sigma^2/n)}{(-1)^n}
=
\log \left(1+\eee^{2\ii \zeta_{\sigma^2/4}(\theta/2)}\right) - \frac{\sigma^2}{2\left(1+ \eee^{-2 \ii \zeta_{\sigma^2/4}(\theta/2)}\right)^2},
\label{eq:log_asympt_H_n}\\
\lim_{n\to\infty} \frac 1n \, \frac{H_n'(-\eee^{\ii \theta}; \sigma^2/n)}{H_n(-\eee^{\ii \theta}; \sigma^2/n)}
=
\frac{- \eee^{-\ii \theta}}{1+ \eee^{-2 \ii \zeta_{\sigma^2/4}(\theta/2)}}
=
- \eee^{-\ii \theta} \left(\frac \ii 2 \tan \zeta_{\sigma^2/4} (\theta/2) + \frac 12\right).
\label{eq:asympt_H_n_log_der}
\end{align}
The logarithms in~\eqref{eq:log_asympt_H_n} are chosen such that $\log 1 = 0$ and all functions of the form $\log (\ldots)$ are continuous (and analytic) in $\theta \in \bH$.
\end{theorem}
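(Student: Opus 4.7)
The starting point is a Hubbard--Stratonovich linearization of the quadratic term in \eqref{eq:hermite_poly_circ_def1}. Splitting $\exp\{-\sigma^2 j(n-j)/(2n)\} = \exp\{-\sigma^2 j/2\}\exp\{\sigma^2 j^2/(2n)\}$ and using the Gaussian identity $\eee^{s j^2} = (2\pi)^{-1/2}\int_\R \eee^{-y^2/2 + jy\sqrt{2s}}\,\dd y$ (valid for $s\ge 0$, here with $s = \sigma^2/(2n)$), I exchange sum and integral and apply the binomial theorem to collapse the sum in $j$. After the substitution $u = y\sigma/\sqrt n$ this yields
\begin{equation}\label{eq:repr_plan}
\frac{H_n(-\eee^{\ii\theta};\sigma^2/n)}{(-1)^n} = \frac{\sqrt n}{\sigma\sqrt{2\pi}}\int_\R \eee^{n\,g(u)}\,\dd u, \qquad g(u) := -\frac{u^2}{2\sigma^2} + \log\bigl(1 + \eee^{u + \ii\theta - \sigma^2/2}\bigr),
\end{equation}
which converts the problem into a complex-valued Laplace integral.

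Applying the saddle-point method to \eqref{eq:repr_plan}, the critical equation $g'(u) = 0$ with the substitution $x := u/\sigma^2 = 1/(1 + \eee^{-u - \ii\theta + \sigma^2/2})$ becomes $\log\bigl((1-x)/x\bigr) = \tfrac{\sigma^2}{2}(1-2x) - \ii\theta$. A further substitution $x = 1/(1 + \eee^{-2\ii\zeta})$ yields $\log((1-x)/x) = -2\ii\zeta$ and $1 - 2x = -\ii\tan\zeta$, reducing the critical equation to $2\zeta - \tfrac{\sigma^2}{2}\tan\zeta = \theta$. By Theorem~\ref{theo:riemann_surface_z_i} applied with $t = \sigma^2/4$ at the point $\theta/2$, this has a unique solution $\zeta^\star := \zeta_{\sigma^2/4}(\theta/2)\in\bH$ whenever $\theta\in\bH$. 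Substituting $u^\star = \sigma^2/(1+\eee^{-2\ii\zeta^\star})$ back into $g$, and simplifying via $1 + \eee^{u^\star + \ii\theta - \sigma^2/2} = 1/(1-x^\star)$ together with $1 - x^\star = \eee^{-2\ii\zeta^\star}/(1+\eee^{-2\ii\zeta^\star})$ and the branch identity $\log(1+\eee^{-2\ii\zeta^\star}) = \log(1+\eee^{2\ii\zeta^\star}) - 2\ii\zeta^\star$, the saddle value collapses to
\[
g(u^\star) \;=\; \log\bigl(1+\eee^{2\ii\zeta^\star}\bigr) \;-\; \frac{\sigma^2}{2\bigl(1+\eee^{-2\ii\zeta^\star}\bigr)^2},
\]
which is exactly the right-hand side of \eqref{eq:log_asympt_H_n}. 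Formula \eqref{eq:asympt_H_n_log_der} then follows from \eqref{eq:log_asympt_H_n} by differentiating the locally uniform limit of analytic functions (Vitali--Porter), using $\frac{\dd}{\dd\theta}\frac{1}{n}\log H_n(-\eee^{\ii\theta};\sigma^2/n) = -\ii\eee^{\ii\theta}\cdot n^{-1}H_n'(-\eee^{\ii\theta})/H_n(-\eee^{\ii\theta})$; the identity $2(1 - t\sec^2\zeta)\zeta_t'(\theta/2) = 1$ obtained by implicit differentiation of $\zeta - t\tan\zeta = \theta/2$ collapses the algebra to the stated form.

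The main obstacle is the rigorous execution of the saddle-point method for complex $\theta\in\bH$: one must (i) deform the real axis in \eqref{eq:repr_plan} to a steepest-descent contour through the complex saddle $u^\star$ without crossing the branch locus of $\log(1+\eee^{u+\ii\theta-\sigma^2/2})$; (ii) verify the nondegeneracy $g''(u^\star)\neq 0$ so that the Gaussian correction near the saddle is controlled; and (iii) show that the tails and off-saddle portions contribute only subexponentially compared with $\eee^{n\,g(u^\star)}$. Because $\Im\theta>0$, one obtains an explicit inequality $\Re g(u) < \Re g(u^\star)$ away from the saddle along the deformed contour; the global uniqueness part of Theorem~\ref{theo:riemann_surface_z_i}, together with the quadratic $-u^2/(2\sigma^2)$ dominating $g(u)$ for large $|u|$, controls the geometry of the level sets of $\Re g$ and justifies the deformation.
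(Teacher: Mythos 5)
Your integral representation, saddle equation, and saddle value are all correct and coincide (after the shift $t=u-\sigma^2/2$) with the paper's Lemma~\ref{lem:hubbard_stratonovich}, Lemma~\ref{lem:crit_point_explicit} and Equation~\eqref{eq:S_explicit}. The gap is in what you relegate to the final paragraph: you assert that for every $\theta\in\bH$ the real axis can be deformed to a contour through the complex saddle $u^\star$ along which $\Re g(u)<\Re g(u^\star)$ away from the saddle, citing only ``$\Im\theta>0$'', the uniqueness in Theorem~\ref{theo:riemann_surface_z_i}, and the quadratic decay. No such inequality is exhibited, and this is precisely the step the paper could not carry out in general: it constructs the admissible contour only for $|z|=|{-\eee^{\ii\theta}}|\leq r$ with $r$ small (by perturbing the negative half-line, which works at $z=0$), and explicitly states that existence of such a contour for all $z\in\bD$ is only conjectured. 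Uniqueness of the saddle in the region $\Re u<\sigma^2/2$ says nothing about the global level-set geometry of $\Re g$ when $\Im\theta$ is not large, nor about which saddle the deformed contour must pass through; so as written your argument only establishes the theorem in the regime of large $\Im\theta$ (small $|z|$), i.e.\ the content of Proposition~\ref{prop:log_asympt_H_n_small_z}, not ``locally uniformly in $\theta\in\bH$''.

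The paper reaches the full statement by a different mechanism that your proposal is missing: from the small-disc asymptotics it deduces convergence of the $\psi$-transforms (Lemmas~\ref{lem:psi_mu_n_sigma^2} and~\ref{lem:psi_normal}), hence of the Fourier coefficients, hence weak convergence of the zero-counting measures on $\bT$ (Theorem~\ref{theo:weak_conv_zeroes_to_free_normal}, using Lemma~\ref{lem:no_zeroes_in_D}); since $\frac1n H_n'/H_n=-z^{-1}\psi_{\mu_{n;\sigma^2}}(z)$ and the kernels $u\mapsto u/(1-uz)$, $|z|\leq R<1$, form a compact family in $C(\bT)$, weak convergence upgrades to locally uniform convergence of the logarithmic derivative on all of $\bD$, and \eqref{eq:log_asympt_H_n} follows by integration from $0$. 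Two further points you would also need to handle even in the small-$|z|$ regime: the saddle asymptotics first give only $\frac1n\log|H_n|\to\Re S$, and passing to the complex logarithm with the stated branch (and upgrading pointwise to locally uniform convergence) requires an extra argument — the paper uses the Harnack/Egorov Lemma~\ref{lem:differentiate_limit_real_parts} with the normalization $h_n(0)=0$; and your Vitali--Porter differentiation step presupposes exactly this locally uniform convergence. So either prove the global contour deformation you assert (which would be a genuinely new contribution), or reroute the extension to all of $\bH$ through the weak convergence of the zero measures as the paper does.
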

\begin{remark}\label{rem:analytic_functions_well_defined}
We can consider all functions appearing in~\eqref{eq:log_asympt_H_n} and~\eqref{eq:asympt_H_n_log_der} as analytic functions of the variable $z:= -\eee^{\ii \theta}\in \bD$ including the value $z=0$. Firstly, $z\to 0$ is equivalent to $\Im \theta \to +\infty$. In this regime, \eqref{eq:zeta_limit_behavior} implies that $\zeta_{\sigma^2/4}(\theta/2) = (\theta/2)  + \ii (\sigma^2/4) + o(1)$ and, consequently,  $\Im \zeta_{\sigma^2/4}(\theta/2) \to +\infty$. It follows that the right-hand side of~\eqref{eq:log_asympt_H_n} converges to $0$, while the right-hand side of~\eqref{eq:asympt_H_n_log_der} converges to $-\eee^{-\sigma^2/2}$. Secondly, note that $\theta$ corresponding to a given $z\in \bD \backslash\{0\}$ is defined only up to a summand of the form $2\pi n$ with $n\in \Z$. Still, the right-hand sides of~\eqref{eq:log_asympt_H_n} and~\eqref{eq:asympt_H_n_log_der} stay invariant under the substitution $\theta \mapsto \theta + 2\pi n$ (since $\zeta_{\sigma^2/4}(\theta/2 + \pi n) = \zeta_{\sigma^2/4}(\theta/2) + \pi n$ by~\eqref{eq:zeta_properties}) and hence define analytic functions of $z\in \bD$.  Analogous observations apply to many similar functions below. Note that convergence in~\eqref{eq:log_asympt_H_n} and~\eqref{eq:asympt_H_n_log_der} stays  locally uniform in $z\in \bD$. For $z$ outside any small disk around $0$, this is stated in Theorem~\ref{theo:hermite_unitary_asympt}, while the rest follows  from Cauchy's integration formula.
\end{remark}

\begin{remark}
Theorem~\ref{theo:hermite_unitary_asympt} describes the asymptotics of $H_n(z;\sigma^2/n)$ for $|z|<1$. The asymptotics for $|z|>1$ can be derived from the identity $z^n H_n(1/z; \sigma^2/n) = (-1)^n H_n(z; \sigma^2/n)$ following from~\eqref{eq:hermite_poly_circ_def1}. On the circle $\{|z|=1\}$ one may expect an asymptotic result of Plancherel-Rotach type; see~\cite[Theorem~8.22.9]{szegoe_book} for the case of the classical Hermite polynomials.
\end{remark}

\subsection{Applications to the Curie--Weiss model}
We are now going to describe the global limiting distribution of zeroes of $Z_n(\beta, h)$, the partition function of the Curie--Weiss model defined in~\eqref{eq:curie_weiss_part_funct_def}. We consided the so-called Lee--Yang zeroes, that is we fix real $\beta>0$ and allow $h$ to be complex. By the Lee--Yang theorem, all zeroes are purely imaginary; see~\eqref{eq:curie_weiss_part_funct_reduction_hermite} and  Lemma~\ref{lem:no_zeroes_in_D}.  Observe also that $Z_n(\beta, h +\pi \ii) = \eee^{\pi \ii n} Z_n(\beta, h)$ by~\eqref{eq:curie_weiss_part_funct_def}  implying that the zeros are periodic with period $\pi \ii$.
\begin{corollary}\label{cor:curie_weiss_zeroes}
Fix $\beta>0$. For the partition function of the Curie--Weiss model, the following convergence holds vaguely on $\R$:
$$
\frac 1n \sum_{y\in \R:\,  Z_n(\beta, \ii y) = 0} \delta_{y} \tovague \nu_{\beta}.
$$
Here, $\nu_\beta$ is a measure on $\R$ which is invariant under the shifts $h\mapsto h + \pi \ell$, $\ell\in \Z$,  and is characterized by $\nu_\beta(A) = \fnorm_{4\beta} ( - \eee^{2 \ii A})$ for every Borel set $A\subset (-\frac{\pi}{2}, \frac{\pi}{2}]$, where  $\fnorm_{4\beta}$ is the free unitary normal distribution on the unit circle with parameter $\sigma^2 = 4\beta$; see Section~\ref{sec:free_unitary_normal}.
\end{corollary}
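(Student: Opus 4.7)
\medskip
\noindent\textbf{Proof proposal.} The plan is to reduce the statement to Theorem~\ref{theo:weak_conv_zeroes_to_free_normal} via the factorization~\eqref{eq:curie_weiss_part_funct_reduction_hermite}. Since the prefactor $(-1)^n \eee^{n(\beta/2 - h)}$ never vanishes, the zeroes of $h \mapsto Z_n(\beta, h)$ coincide with those of $h \mapsto H_n(-\eee^{2h}; 4\beta/n)$. By Lemma~\ref{lem:no_zeroes_in_D}, all zeroes of $H_n(\,\cdot\,; 4\beta/n)$ lie on $\bT$, hence the zeroes $h$ of $Z_n(\beta, \cdot)$ satisfy $|{-}\eee^{2h}| = 1$, forcing $\Re h = 0$; writing $h = \ii y$ with $y \in \R$, the map $y \mapsto -\eee^{2 \ii y}$ wraps $\R$ onto $\bT$ with period $\pi$. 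Consequently, each zero $z_0 = \eee^{\ii \phi_0}$ of $H_n(\,\cdot\,; 4\beta/n)$ produces the $\pi$-periodic orbit $\{y_0 + \pi k : k \in \Z\}$ of zeroes of $Z_n(\beta, \ii\,\cdot\,)$, where $y_0 \in (-\pi/2, \pi/2]$ is the unique representative with $-\eee^{2 \ii y_0} = \eee^{\ii \phi_0}$.

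To establish vague convergence on $\R$, I would test against an arbitrary $\phi \in C_c(\R)$ and form the periodization
\[
\psi(y) := \sum_{k \in \Z} \phi(y + \pi k),
\]
which is a well-defined continuous $\pi$-periodic function on $\R$ (the sum is locally finite). Because of the periodicity, $\psi$ descends to a continuous function $\tilde\phi$ on $\bT$ via the relation $\tilde\phi(-\eee^{2 \ii y}) = \psi(y)$. Grouping the zeroes of $Z_n(\beta, \ii\,\cdot\,)$ into their $\pi$-periodic orbits yields
\[
\frac{1}{n} \sum_{y \in \R:\, Z_n(\beta, \ii y) = 0} \phi(y)
=
\frac{1}{n} \sum_{z \in \bT:\, H_n(z;\, 4\beta/n) = 0} \tilde\phi(z)
=
\int_{\bT} \tilde\phi(z)\, \mu_{n; 4\beta}(\dint z).
\]
By Theorem~\ref{theo:weak_conv_zeroes_to_free_normal} with $\sigma^2 = 4\beta$, the right-hand side converges as $n\to\infty$ to $\int_{\bT} \tilde\phi\, \dint \fnorm_{4\beta}$.

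It remains to identify this limit with $\int_{\R} \phi \, \dint \nu_{\beta}$. Using the defining relation $\nu_{\beta}(A) = \fnorm_{4\beta}(-\eee^{2 \ii A})$ on $A \subset (-\pi/2, \pi/2]$ and extending by $\pi$-periodicity, a straightforward change of variables gives $\int_{\bT} \tilde\phi \, \dint \fnorm_{4\beta} = \int_{(-\pi/2,\pi/2]} \psi(y)\, \nu_{\beta}(\dint y) = \int_{\R} \phi\, \dint \nu_{\beta}$, where the last equality uses the $\pi$-periodicity of $\nu_{\beta}$ together with the definition of $\psi$. Since $\phi \in C_c(\R)$ was arbitrary, vague convergence follows. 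The entire argument is essentially a change of variables once Theorem~\ref{theo:weak_conv_zeroes_to_free_normal} is in hand; the only delicate bookkeeping concerns the periodization, and this is routine. I do not expect any real obstacle beyond Theorem~\ref{theo:weak_conv_zeroes_to_free_normal} itself.
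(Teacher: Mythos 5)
Your proposal is correct and follows essentially the same route as the paper: both factor out the nonvanishing prefactor in~\eqref{eq:curie_weiss_part_funct_reduction_hermite}, periodize a test function $\phi\in C_c(\R)$ into a $\pi$-periodic function that descends to a continuous function on $\bT$ via $y\mapsto -\eee^{2\ii y}$, and then apply the weak convergence of Theorem~\ref{theo:weak_conv_zeroes_to_free_normal} with $\sigma^2=4\beta$ before identifying the limit with $\int_\R \phi\,\dint\nu_\beta$ by the defining change of variables. No gaps; the bookkeeping you flag (continuity of the descended test function across the seam, guaranteed by the periodization) is exactly the point the paper's proof also relies on.
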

\begin{proof}
Let $f:\R \to \R$ be a continuous function with compact support.  Define $f^*(y):= \sum_{\ell\in \Z} f(y + \pi \ell)$ for $y\in (-\frac{\pi}{2}, \frac{\pi}{2}]$. Let also $\psi: \bT \to  (-\frac{\pi}{2}, \frac{\pi}{2}]$ be the inverse map of $y\mapsto -\eee^{2\ii y}$. Then, by~\eqref{eq:curie_weiss_part_funct_reduction_hermite},
$$
\frac 1n \sum_{y\in \R:\,  Z_n(\beta, \ii y) = 0} f(y)
=
\frac 1n \sum_{y\in (-\frac{\pi}{2}, \frac{\pi}{2}]:\,  H_n(-\eee^{2\ii y}, \frac{4\beta}{n}) = 0} f^*(y)
=
\frac 1n \sum_{z\in \bT:\,  H_n(z, \frac{4\beta}{n}) = 0} f^*(\psi(z)).
$$
By Theorem~\ref{theo:weak_conv_zeroes_to_free_normal}, the latter sum converges to
$\int_{\bT} f^*(\psi(z))\fnorm_{4\beta}(\dint z) = \int_{\R} f(y)\nu_{\beta}(\dint y)$, and the claim follows.
\end{proof}
\begin{remark}
The Lebesgue density of $\nu_\beta$ on $\R$ is given by $y\mapsto 2 f_{4\beta} (- \eee^{2\ii y}) = \frac{1}{\pi \beta} \Im \zeta_\beta(y)$,  where $f_{4\beta}$ is the function which will be discussed in Theorem~\ref{theo:properties_free_normal}. It follows  from this theorem that the support of $\nu_\beta$ is $\R$ for $\beta \geq 1$, while for $0<\beta<1$ the support is the union of the intervals
$$
\left[\frac{\pi}{2} - \arcsin  \sqrt \beta -  \sqrt{\beta - \beta^2}  + \pi \ell, \frac{\pi}{2} + \arcsin  \sqrt \beta +  \sqrt{\beta - \beta^2} + \pi \ell\right],
\qquad
\ell\in \Z.
$$
If $\beta$ increases from $0$ to $\infty$, then the support of $\nu_\beta$ hits the real axis  at $\beta=1$, which is well known to be the point of phase transition for the Curie--Weiss model.
\end{remark}

In the next result we compute the free energy of the Curie--Weiss model in the complex $h$-plane excluding the imaginary axis.
\begin{corollary}\label{cor:curie_weiss_free_energy}
Let $\beta >0$ and $h\in \C$ with $\Re h >0$. For the partition function of the Curie--Weiss model defined in~\eqref{eq:curie_weiss_part_funct_def} we have
$$
\lim_{n\to\infty} \frac 1n \log Z_n(\beta, h)
=
\lim_{n\to\infty} \frac 1n \log Z_n(\beta, -h)
=
\frac \beta 2 + h + \log \left(1+\eee^{2\ii \zeta_{\beta}(\ii h)}\right) - \frac{2\beta}{\left(1+ \eee^{-2 \ii \zeta_{\beta}(\ii h)}\right)^2}.
$$
\end{corollary}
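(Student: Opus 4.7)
The plan is to reduce the statement to Theorem~\ref{theo:hermite_unitary_asympt} via the algebraic identity~\eqref{eq:curie_weiss_part_funct_reduction_hermite}, which already rewrites $Z_n(\beta,h)$ as a value of the unitary Hermite polynomial. The one cosmetic issue is that Theorem~\ref{theo:hermite_unitary_asympt} requires its argument to satisfy $\Im\theta > 0$, whereas the corollary is stated for $\Re h > 0$; this mismatch is resolved by exploiting the obvious symmetry $Z_n(\beta,h) = Z_n(\beta,-h)$ (obtained by flipping every spin $\sigma_k \mapsto -\sigma_k$ in~\eqref{eq:curie_weiss_part_funct_def}, noting that both terms in the exponent are preserved under this involution). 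This symmetry also explains the claimed equality of the two limits.

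The next step is to pick $\theta := 2\ii h$ so that $\Im \theta = 2\Re h > 0$ and $-\eee^{\ii\theta} = -\eee^{-2h}$. From~\eqref{eq:curie_weiss_part_funct_reduction_hermite} applied at $-h$, one then has
\begin{equation*}
\frac{Z_n(\beta,-h)}{\eee^{n(\beta/2 + h)}} \;=\; (-1)^n H_n\!\left(-\eee^{-2h};\tfrac{4\beta}{n}\right) \;=\; \frac{H_n(-\eee^{\ii\theta};\sigma^2/n)}{(-1)^n},
\end{equation*}
with $\sigma^2 = 4\beta$ (using $((-1)^n)^2=1$). Taking $\frac 1n \log$ of both sides and invoking~\eqref{eq:log_asympt_H_n} of Theorem~\ref{theo:hermite_unitary_asympt}, the right-hand side tends to
\begin{equation*}
\log\!\bigl(1 + \eee^{2\ii \zeta_{\sigma^2/4}(\theta/2)}\bigr) - \frac{\sigma^2}{2\bigl(1 + \eee^{-2\ii\zeta_{\sigma^2/4}(\theta/2)}\bigr)^2}
=
\log\!\bigl(1 + \eee^{2\ii\zeta_\beta(\ii h)}\bigr) - \frac{2\beta}{\bigl(1 + \eee^{-2\ii\zeta_\beta(\ii h)}\bigr)^2},
\end{equation*}
after substituting $\theta/2 = \ii h$ and $\sigma^2/4 = \beta$. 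Adding the explicit prefactor $\beta/2 + h$ from the identity above yields the formula of the corollary for $\lim \frac 1n \log Z_n(\beta,-h)$, and the symmetry handles $\lim \frac 1n \log Z_n(\beta,h)$.

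There is essentially no obstacle beyond bookkeeping: the free-energy formula is a direct translation of the asymptotics of $H_n$. One should verify that the branch of the logarithm in~\eqref{eq:log_asympt_H_n} is compatible with $\log Z_n$ being real when $h$ is real positive (it is, since at $z=0$, i.e.\ $\Im\theta = +\infty$, the right-hand side vanishes by Remark~\ref{rem:analytic_functions_well_defined}, so the principal branch chosen there corresponds precisely to the natural normalization of $\log Z_n$ obtained by factoring out the leading exponential $\eee^{n(\beta/2+h)}$). The only mild subtlety worth remarking on is that the two limits agree because $Z_n(\beta,\cdot)$ is even, which reflects the fact that in the formula on the right-hand side, $h$ appears only through $\eee^{\pm 2\ii\zeta_\beta(\ii h)}$ together with an additive $h$; the agreement thus encodes a non-trivial functional equation for $\zeta_\beta$, namely $\zeta_\beta(-\ii h)$ is the appropriate conjugate quantity to $\zeta_\beta(\ii h)$ via~\eqref{eq:zeta_properties}.
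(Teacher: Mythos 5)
Your proposal is correct and follows essentially the same route as the paper: use the spin-flip symmetry $Z_n(\beta,h)=Z_n(\beta,-h)$ together with identity~\eqref{eq:curie_weiss_part_funct_reduction_hermite} evaluated at $-h$, and then apply Theorem~\ref{theo:hermite_unitary_asympt} with $\sigma^2=4\beta$ and $\theta=2\ii h$ (so that $z=-\eee^{-2h}$ lies in $\bD$ when $\Re h>0$). The only quibble is your closing aside: the equality of the two limits is immediate because $Z_n(\beta,\cdot)$ is even, so the same sequence appears on both sides and no hidden functional equation for $\zeta_\beta$ is being used.
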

\begin{proof}
Recall that $Z_n(\beta, h) = Z_n(\beta, -h)$ is given by~\eqref{eq:curie_weiss_part_funct_def} and~\eqref{eq:curie_weiss_part_funct_reduction_hermite} and apply Theorem~\ref{theo:hermite_unitary_asympt} with $\sigma^2= 4\beta$ and $\theta = 2\ii h$.
\end{proof}

\subsection{Expected characteristic polynomial of the Brownian motion on unitary matrices}
It is well known~\cite[Section~6.1.2]{potters_bouchaud_book} that the expected characteristic polynomial of an $n\times n$ Wigner random matrix coincides with the $n$-th Hermite polynomial. Formulas of this type  go back  to Heine~\cite[Eqn.~(2.2.11) on p.~27]{szegoe_book}. For this and analogous results on several other types of random matrices including the Wishart matrices whose expected characteristic polynomials are the Laguerre polynomials we refer to~\cite[Sections~6.2, 6.3]{potters_bouchaud_book}, \cite{aomoto}, \cite[Chapter~9]{edelman_phd}, \cite[Theorem~4.1]{dumitriu_edelman},  \cite[Eqn.~(15)]{brezin_hikami}, \cite[Proposition~12]{diaconis_gamburd}, \cite[Proposition~11]{forrester_gamburd}, \cite[Theorem~1.1]{akemann_goetze_neuschel}.

In this section we prove a similar result on unitary Hermite polynomials by relating them to the expected characteristic polynomials of the random matrices obtained by running a Brownian motion on the unitary group $U(n)$. More precisely, we consider the unitary group $U(n)$ as a compact Riemannian manifold endowed with the Riemannian metric induced by its natural embedding into $\C^{n\times n}\equiv \R^{2n^2}$.
On the Lie algebra $\mathfrak{u}(n)= \{A\in \text{Mat}_{n\times n}(\C): A^* = -A\}$ (which can be identified with the tangent space of $U(n)$ at  the identity matrix $I_n$) the scalar product takes the form $\langle A,  B\rangle = \Tr (A B^*) = - \Tr (AB)$.

Let now $(U_n(t))_{t\geq 0}$ be the Brownian motion on the unitary group $U(n)$ starting at the identity matrix $I_n$ at time $t=0$. The eigenvalues $\lambda_1(t), \ldots, \lambda_n(t)$ of the unitary random matrix $U_n(t)$ represent a special case of Dyson's Brownian motions~\cite[Section~III]{dyson} on the circle; see also~\cite{hobson_werner,cepa_lepingle_circle,biane_matrixvalued,hall_notes,buijsman,forrester2024dip} for further information on this process.

To define Dyson's Brownian motions on the circle, fix parameters $n\in \N$ and $\lambda>0$. Let $(B_1(t))_{t\geq 0},\ldots,(B_n(t))_{t\geq 0}$ be $n$ independent standard Brownian motions on $\R$. We are interested in real-valued stochastic processes  $X_1(t) \leq \ldots \leq  X_n(t)$, defined for $t\geq 0$ and solving stochastic differential equations
\begin{equation}\label{eq:diffusions_singular_drifts}
\dint X_j = \dint B_j + \lambda \cdot \bigg(\sum_{\substack{k\in \{1,\ldots,n\}\\ k\neq j}} \cot \frac{X_j -X_k}{2}\Bigg) \dint t,
\qquad j=1,\ldots, n,
\end{equation}
with the initial condition $X_1(0)= \ldots = X_n(0) = 0$. If $\lambda=1/2$, then we can identify $\eee^{\ii X_1(t)},\ldots, \eee^{\ii X_n(t)}$ with the eigenvalues $\lambda_1(t), \ldots, \lambda_n(t)$ of the unitary random matrix $U_n(t)$.
More precisely, it is known that the measure-valued process $(\sum_{\ell=1}^n \delta_{\eee^{\ii X_\ell(t)}})_{t\geq 0}$ has the same distribution as the process $(\sum_{\ell=1}^n \delta_{\lambda_\ell(t)})_{t\geq 0}$.

We are interested in the following polynomial in $x$ which, for $\lambda=1/2$, reduces to  the characteristic polynomial of $U_n(t)$:
$$
P_{n,\lambda}(x; t) := \prod_{j=1}^n \left(x- \eee^{\ii X_j(t)}\right).
$$
\begin{theorem}\label{theo:expected_char_poly_unitary_BM}
For every $\lambda>0$, $n\in \N$,  $t\geq 0$ and $x\in \C$ we have
$$
\E P_{n,\lambda}(x;t)  = \sum_{j = 0}^n(-1)^{n-j} \binom n j \eee^{ - \frac 12 (n-j) t - \lambda j (n-j)  t}  x^{j}
=
\eee^{-\frac 12 nt} H_n(\eee^{t/2} x; 2\lambda t).
$$
\end{theorem}
\begin{proof}
To simplify the notation, we shall usually suppress the dependence of quantities under consideration on $n$ and $\lambda$. Let $e_\ell(t)$ be the $\ell$-th elementary symmetric polynomial of $\eee^{\ii X_1(t)},\ldots, \eee^{\ii X_n(t)}$, that is
$$
e_\ell(t) = \sum_{1\leq j_1 < \ldots < j_\ell \leq n} \eee^{\ii X_{j_1}(t) +\ldots + \ii X_{j_\ell}(t)},
\qquad
\ell = 1,\ldots, n,
$$
Put also $e_0(t) = 1$.
Since $P_{n,\lambda}(x;t) = \sum_{\ell = 0}^n (-1)^\ell e_\ell(t) x^{n-\ell}$ by Vieta's formula, it suffices to show that for all $\ell \in \{1,\ldots, n\}$ we have
$$
\E e_\ell(t) = \binom n\ell \eee^{-\frac 1 2 \ell t -  \lambda \ell (n-\ell) t}.
$$

To this end, we shall derive stochastic differential equations satisfied by $e_\ell(t)$. Using the It\^{o} formula, see, e.g.,~\cite[Chapter~IV, Theorem~(3.3)]{revuz_yor_book}, we have
$$
\dint e_\ell
=
\sum_{1\leq j_1 < \ldots < j_\ell \leq n} \dint \left(\eee^{\ii X_{j_1} +\ldots + \ii X_{j_\ell}}\right)
=
\sum_{1\leq j_1 < \ldots < j_\ell \leq n}
\left(\sum_{s=1}^\ell \ii  \eee^{\ii X_{j_1} +\ldots + \ii X_{j_\ell}}\dint X_{j_s}
-\frac {\ell}2  \eee^{\ii X_{j_1} +\ldots + \ii X_{j_\ell}} \dint t
\right).
$$
Write $V(x) := \lambda \cot  \frac x2$. Recalling~\eqref{eq:diffusions_singular_drifts}, we obtain
\begin{equation}\label{eq:elem_symm_SDE1}
\dint e_\ell = \sum_{1\leq j_1 < \ldots < j_\ell \leq n} \ii \eee^{\ii X_{j_1} +\ldots + \ii X_{j_\ell}} \left(\dint B_{j_1} +\ldots + \dint B_{j_\ell} \right) - \frac \ell 2 e_\ell \dint t
+
R \dint t
\end{equation}
with
\begin{align*}
R
&:=
\sum_{1\leq j_1 < \ldots < j_\ell \leq n} \ii \eee^{\ii X_{j_1} +\ldots + \ii X_{j_\ell}}  \Bigg( \sum_{\substack{m\in \{1,\ldots, n\}\\ m\neq j_1}} V(X_{j_1} - X_m) + \ldots + \sum_{\substack{m\in \{1,\ldots, n\}\\ m\neq j_\ell}} V(X_{j_\ell} - X_m)\Bigg)\\
&=
\sum_{1\leq j_1 < \ldots < j_\ell \leq n} \ii \eee^{\ii X_{j_1} +\ldots + \ii X_{j_\ell}}  \Bigg( \sum_{m\in \{1,\ldots, n\}\backslash \{j_1,\ldots, j_\ell\}} \Big(V(X_{j_1} - X_m) + \ldots +  V(X_{j_\ell} - X_m)\Big) \Bigg),
\end{align*}
where in the second line we used that $V(-x) = -V(x)$. After some re-indexing, we can write
\begin{align*}
R
&=
\frac{\ii}{(\ell+1)!}\sum_{\substack{k_0,\ldots, k_\ell\in \{1,\ldots, n\}\\\text{pairwise distinct}}}\eee^{\ii X_{k_0} + \ii X_{k_1} + \ldots + \ii X_{k_\ell}} \sum_{\substack{s,p\in \{0,\ldots,\ell\}\\s\neq p}} \frac{V(X_{k_p} - X_{k_s})}{\eee^{\ii X_{k_s}}}\\
&=
\frac{\ii}{2 (\ell+1)!}\sum_{\substack{k_0,\ldots, k_\ell\in \{1,\ldots, n\}\\\text{pairwise distinct}}}\eee^{\ii X_{k_0} + \ii X_{k_0} + \ldots + \ii X_{k_\ell}} \sum_{\substack{s,p\in \{0,\ldots,\ell\}\\s\neq p}} \left(\frac{V(X_{k_p} - X_{k_s})}{\eee^{\ii X_{k_s}}} + \frac{V(X_{k_s} - X_{k_p})}{\eee^{\ii X_{k_p}}}\right).
\end{align*}
To simplify the expression in the brackets, note that
$$
\frac{V(x - y)}{\eee^{\ii y}} + \frac{V(y - x)}{\eee^{\ii x}}
=
\lambda  \cot \left(\frac {x-y}{2}\right) \left(\eee^{\ii (x-y)} - 1\right) \eee^{-\ii x}
=
\ii \lambda \left(\eee^{- \ii x} + \eee^{- \ii y}\right).
$$
It follows that
\begin{align*}
R
&=
\frac{-\lambda}{2 (\ell+1)!}\sum_{\substack{k_0,\ldots, k_\ell\in \{1,\ldots, n\}\\\text{pairwise distinct}}}\eee^{\ii X_{k_0} + \ii X_{k_1} + \ldots + \ii X_{k_\ell}} \sum_{\substack{s,p\in \{0,\ldots,\ell\}\\s\neq p}} \left(\eee^{ - \ii X_{k_s}} + \eee^{-\ii X_{k_p}}\right)\\
&= - \lambda \ell (n-\ell) e_\ell(t).
\end{align*}
To justify the last identity, observe that the double sum in the first line must be a multiple of $e_\ell$ for symmetry reasons and that it contains $(n)_{\ell+1} \cdot (\ell+1) \ell \cdot 2$ summands, while $e_\ell$ contains $\binom n \ell$ summands. Taking the quotient of these two numbers, it follows that the double sum in the first line equals $2(\ell+1)! \ell(n-\ell) e_{\ell}$.

Finally, recalling~\eqref{eq:elem_symm_SDE1}, we arrive at the stochastic differential equation
\begin{equation}\label{eq:elem_symm_SDE2}
\dint e_\ell = - \left(\frac \ell 2 + \lambda \ell (n-\ell)\right) e_\ell \dint t + \sum_{j=1}^n  \ii \eee^{\ii X_j} e_{\ell-1}^{(j)} \dint B_j,
\end{equation}
where $e_{\ell-1}^{(j)}$ is the $(\ell-1)$-st elementary symmetric polynomial of $\eee^{\ii X_1},\ldots,\eee^{\ii X_{j-1}}$, $\eee^{\ii X_{j+1}}, \ldots, \eee^{\ii X_{n}}$ (excluding $\eee^{\ii X_{j}}$). From the It\^{o} formula it follows that $\eee^{\frac 1 2 \ell t + \lambda \ell (n-\ell) t} e_\ell(t)$ is a martingale.
Recalling that $e_\ell(0) = \binom n \ell$ we conclude that
$$
\E e_\ell (t) = \binom n\ell \eee^{-\frac 1 2 \ell t - \lambda \ell (n-\ell) t},
$$
and the proof is complete.
\end{proof}

Following a suggestion of an anonymous referee let us extend the above result to the Brownian motion $(V U_n(t))_{t\geq 0}$ on the unitary group  $U(n)$ starting from an arbitrary unitary matrix $V\in U(n)$.
\begin{corollary}\label{cor:unitary_BM}
For every unitary matrix $V\in U(n)$ and all $t\geq 0$ and $z\in \C$ we have
\begin{equation}\label{eq:Q_n_def_char_poly_BM_unitary_group}
Q_n(z; t ) = \E \det(z I_n - V U_n(t))
=
\det (z I_n-V) \boxtimes_n \eee^{-\frac 12 n t} H_n(\eee^{t/2}z; t).
\end{equation}
Also, the following PDE holds:
\begin{equation}\label{eq:PDE_unitary_BM}
\partial_t Q_n(z;t) = - \frac 12 (z \partial_z + 1) (n-z\partial_z) Q_n(z;t),\qquad  Q_n(z;0) = \det (z I_n-V).
\end{equation}
\end{corollary}
\begin{proof}
It is possible to prove~\eqref{eq:Q_n_def_char_poly_BM_unitary_group} by changing the initial condition for $e_\ell(t)$ in the proof of  Theorem~\ref{theo:expected_char_poly_unitary_BM}, however, we find it more instructive to give a proof that uses finite free probability.
Let $W$ be a Haar-distributed random matrix in $U(n)$ which is independent of everything else. Then, $V U_n(t)$ has the same law as $V W^{-1} U_n(t) W$. Hence,
\begin{align*}
\E \det(z I_n - V U_n(t))
&=
\E [\det(z I_n - V W^{-1} U_n(t) W)]\\
&=
\E [\E [\det(z I_n - V W^{-1} U_n(t) W) \,|\, U_n(t)]].
\end{align*}
In the conditional expectation, $U_n(t)$ is fixed and the integration is over the distribution of $W$, i.e.\ the Haar distribution on $U(n)$. By Theorem~1.5 from~\cite{marcus_spielman_srivastava}, the conditional expectation can be expressed using $\boxtimes_n$, which leads to
\begin{align*}
\E \det(z I_n - V U_n(t))
&=
\E [\det (z I_n-V) \boxtimes_n \det (z I_n-U_n(t))]\\
&=
\det (zI_n-V) \boxtimes_n \E \det (z I_n-U_n(t))\\
&=
\det (z I_n-V) \boxtimes_n \eee^{-\frac 12 n t} H_n(\eee^{t/2}z; t).
\end{align*}
We used the bilinearity of $\boxtimes_n$ and Theorem~\ref{theo:expected_char_poly_unitary_BM} with $\lambda = 1/2$. The proof of~\eqref{eq:Q_n_def_char_poly_BM_unitary_group} is complete.

Since $\boxtimes_n$ commutes with $z\partial_z$, see~\cite[p.~810]{marcus_spielman_srivastava}, it suffices to prove the PDE~\eqref{eq:PDE_unitary_BM} for $Q_n(z; t ) = \eee^{-\frac 12 n t} H_n(\eee^{t/2}z; t)$ with the initial condition $Q_n(z;0) = (z-1)^n$, which corresponds to the case when $V= I_n$.
Using that $(z \partial_z +1) (n- z\partial_z)  z^j = (j+1)(n-j) \cdot  z^j$, we deduce
\begin{multline*}
\exp\left\{-\frac t2 (z \partial_z+1) (n- z\partial_z)\right\} (z-1)^n
=
\sum_{j=0}^n (-1)^{n-j} \binom nj \exp\left\{-\frac t2  (z \partial_z+1) (n- z\partial_z)\right\} z^j
\\
=
\sum_{j=0}^n (-1)^{n-j} \binom nj  \exp\left\{ - \frac {t}{2} (j+1) (n-j)\right\} z^j
= \eee^{-\frac 12 n t} H_n(\eee^{t/2}z; t),
\end{multline*}
where we used ~\eqref{eq:hermite_poly_circ_def1} in the last step.
\end{proof}

\begin{remark}
The operator $(z \partial_z + 1) (n-z\partial_z) = -z^2 \partial_z^2 + (n-2) z \partial_z + n$ is (up to sign) the same as the one appearing in Conjecture 2.14 and Proposition 2.15 of Hall and Ho~\cite{hall_ho}.
\end{remark}

Biane~\cite{biane,biane_segal_bargmann} proved that, as $n\to\infty$, the process $(U_n(t/n))_{t\geq 0}$ converges (in a suitable sense) to the free unitary Brownian motion. In particular, by~\cite[Theorem~1]{biane}, the spectral distribution of $U_n(t/n)$ converges weakly to the free unitary normal distribution $\fnorm_{t}$ (making the appearance of this distribution in Theorem~\ref{theo:weak_conv_zeroes_to_free_normal} quite natural); see also~\cite[Section~3.3]{cabanal_duvillard_guionnet} for related large deviation results. Exact combinatorial formulas for moments of the form $\E [\Tr (U_n^{m_1}(t))\ldots \Tr (U_n^{m_r}(t))] $ have been derived in~\cite{levy_schur_weyl}.

\subsection{The action of the backward heat flow on the roots}
Consider a sequence of polynomials (or trigonometric polynomials) of increasing degrees whose empirical distributions of roots approach some  probability measure. One may ask what happens to the asymptotic distribution of roots if we apply to these polynomials certain operator. One special case, in which the operator is the repeated differentiation, has been studied in~\cite{steinerberger_real,steinerberger_conservation,steinerberger_free,orourke_steinerberger_nonlocal,hoskins_kabluchko,bogvad_etal,kiselev_tan,arizmendi_garza_vargas_perales,kabluchko_rep_diff_free_poi,galligo}.
For trigonometric polynomials, it has been shown in~\cite{kabluchko_rep_diff_free_poi} that, on the level of roots, the repeated differentiation induces the free unitary Poisson process.
In his blog, Tao~\cite{tao_blog1,tao_blog2} discusses the evolution of zeroes of a polynomial which undergoes a (backward) heat flow. As this paper was almost complete, Jonas Jalowy brought to our attention the recent preprint by Hall and Ho~\cite{hall_ho} who studied the action of the backward heat flow on the characteristic polynomials of the Ginibre matrices (whose eigenvalues obey the circular law).
We shall consider two settings: algebraic polynomials and trigonometric polynomials, both with real roots, and show that the backward heat flow induces free (additive or unitary) Brownian motion on the level of roots.

\subsubsection{Heat flow acting on algebraic polynomials}
Let $(P_n(z))_{n\in \N}$ be a sequence of algebraic polynomials from $\R[z]$. We suppose that $P_n(z) = \sum_{j=0}^n a_{j:n}z^j$ is real-rooted (that is, it has only real roots) and that all roots are contained in some fixed interval $[-C,C]$ with $C$ not depending on $n$. Moreover, we suppose that the empirical distribution of roots of $P_n$ converges weakly to some probability measure $\mu$ on $[-C,C]$, that is
\begin{equation}\label{eq:empirical_distr_zeroes_conv_algebraic}
\mu\lsem P_n \rsem = \frac 1n \sum_{\substack{z\in \R:\, P_n(z) = 0}} \delta_z \toweak \mu.
\end{equation}
The roots are counted with multiplicities, as always. We are interested in the action which the backward heat flow induces on the roots of $P_n$, in the large $n$ limit.  More precisely, we consider the heat equation on the real line with initial condition given by $P_n(z)$:
\begin{equation}\label{eq:heat_equation_algebraic}
\partial_t g_{n}(z; t) = \frac 12 \partial^2_z g_{n}(z;t),
\qquad
g_n(z; 0) = P_{n}(z),
\qquad
z\in\R,\; t\in \R.
\end{equation}
The solution is explicit and can be written as
\begin{equation}\label{eq:heat_eq_solution_alg}
g_{n} (z; -s)
=
\eee^{-\frac s2  \partial_z^2} P_{n}(z)
=
\sum_{j=0}^n a_{j:n} \eee^{-\frac s2  \partial_z^2} z^j
=
\sum_{j=0}^n  a_{j:n} \He_j\left(\frac{z}{\sqrt s}\right) s^{j/2},
\qquad
z\in \R, \; s\in \R,
\end{equation}
where $\He_j(z)$ is the $j$-th probabilist Hermite polynomial defined by~\eqref{eq:hermite_poly_def} or~\eqref{eq:eq:hermite_exp_on_z^n_with_sigma}.
Note that the solution exists both for positive and negative times since the term $\He_j(z/\sqrt s) s^{j/2}$ does not contain fractional powers of $s$, see~\eqref{eq:hermite_poly_def}, and makes sense irrespective of the sign of $s$. Moreover, for every $s\in \R$, the function  $z\mapsto g_n(z; -s)$ is a polynomial. In the sequel, we shall focus on the case $s>0$, which corresponds to the backward heat equation.  It is known to be ill-posed for initial conditions more general than polynomials. Since we assume that the initial condition $P_n(z)$ is real-rooted, the polynomials $g_n(z; -s)$ remain real-rooted for all $s\geq 0$ by the P\'olya--Benz theorem; see~\cite{benz} or~\cite[Theorem~1.2]{aleman_beliaev_hedenmalm}. Another proof of this fact can be found~\cite{tao_blog1}.
\begin{theorem}\label{theo:heat_flow_zeroes_algebraic}
Fix $r>0$. 
Under the above assumptions, the empirical distribution of zeroes of the polynomials $g_{n}(z; -r^2/n)$ converges weakly (as $n\to\infty$) to the free additive convolution $\mu \boxplus \gamma_{0, 2r}$ of $\mu$ and the Wigner semicircle distribution $\gamma_{0,2r}$ with density $x\mapsto \frac 1 {2\pi r^2} \sqrt{4r^2-x^2}$ on the interval $[-2 r,2r]$.
\end{theorem}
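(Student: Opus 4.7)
The plan is to recognise the backward heat flow as a finite free additive convolution with a rescaled Hermite polynomial, and then invoke the known convergence of $\boxplus_n$ to the classical free additive convolution $\boxplus$ together with the semicircle limit for Hermite zeroes. Concretely, I would first establish the algebraic identity
\[
g_n(z; -s) \;=\; P_n(z) \boxplus_n \bigl[ s^{n/2}\He_n(z/\sqrt{s})\bigr], \qquad s>0.
\]
Since both sides depend linearly on $P_n$ on the space of polynomials of degree $\le n$, it suffices to verify this for $P_n(z) = z^i$ with $0 \le i \le n$. Reading off the coefficients of $s^{n/2}\He_n(z/\sqrt s)$ in the basis $\{z^j/j!\}$ from~\eqref{eq:hermite_poly_def}, substituting into the definition~\eqref{eq:finite_free_add_conv_def} of $\boxplus_n$, and reindexing, both sides reduce to $i!\sum_{m\ge 0}(-s/2)^m z^{i-2m}/(m!(i-2m)!) = s^{i/2}\He_i(z/\sqrt s) = \exp(-\tfrac{s}{2}\partial_z^2)z^i$.

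Setting $s = r^2/n$ in this identity gives
\[
g_n(z; -r^2/n) \;=\; P_n(z) \boxplus_n Q_{n,r}(z), \qquad Q_{n,r}(z) := (r/\sqrt n)^{n}\, \He_n\!\bigl(z\sqrt n/r\bigr).
\]
By the classical semicircle asymptotics for the Hermite zeroes (cited already in the introduction as~\cite{gawronski, kornyik_michaletzky}), $\mu\lsem \He_n(\,\cdot\,\sqrt n)\rsem \toweak \gamma_{0,2}$ with support $[-2,2]$, and after rescaling $\mu\lsem Q_{n,r}\rsem \toweak \gamma_{0,2r}$ with all zeroes of $Q_{n,r}$ eventually contained in a fixed compact interval such as $[-3r,3r]$. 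The conclusion now follows from the general compatibility principle for finite free additive convolution: if $(P_n)$ and $(Q_n)$ are two sequences of real-rooted polynomials of degree $n$ whose empirical root distributions converge weakly to compactly supported probability measures $\mu$ and $\nu$ and whose roots remain in a fixed bounded interval, then $\mu\lsem P_n \boxplus_n Q_n\rsem \toweak \mu \boxplus \nu$. This is established in~\cite{arizmendi_perales, arizmendi_garza_vargas_perales} via finite free cumulants (alternatively, via the convergence of the associated Cauchy or $R$-transforms).

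The step I expect to require the most care is the algebraic identity of the first paragraph, because the definition~\eqref{eq:finite_free_add_conv_def} of $\boxplus_n$ uses the exponential basis $z^j/j!$ while the Hermite expansion involves a different indexing through $(n-j)/2$, so the combinatorial book-keeping must be handled precisely (in particular, the parity constraint $n-j$ even must be tracked consistently with the reindexing $i=\ell+2k$). Once this identity is in hand, the passage to the limit is a direct application of the cited convergence theorem for $\boxplus_n\to\boxplus$ combined with the semicircle limit; both inputs apply to our setting since all roots of $P_n$ lie in the fixed interval $[-C,C]$ and all roots of $Q_{n,r}$ are eventually uniformly bounded.
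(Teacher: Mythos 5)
Your proposal is correct and follows essentially the same route as the paper: both identify $g_n(z;-s)$ as $P_n(z)\boxplus_n \bigl[s^{n/2}\He_n(z/\sqrt s)\bigr]$ and then combine the semicircle limit for rescaled Hermite zeroes with the convergence of $\boxplus_n$ to $\boxplus$ (Proposition~\ref{prop:finite_free_add_conv_to_free_add}). The only difference is cosmetic: you verify the key identity by a direct coefficient computation on monomials, while the paper obtains it more quickly from the facts that $z^n$ is the identity for $\boxplus_n$ and that $\boxplus_n$ commutes with $\partial_z$, hence with $\exp\{-\tfrac s2\partial_z^2\}$.
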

To prove this theorem, we shall use a result from finite free probability~\cite{marcus,marcus_spielman_srivastava}.
Recall from~\eqref{eq:finite_free_add_conv_def} the definition of the finite free additive convolution  $\boxplus_{n}$. It is known from~\cite[Corollary~5.5 and Theorem~5.4]{arizmendi_perales}, see also~\cite[Theorem~4.3]{marcus}, that, as $n\to\infty$,  the finite free additive convolution $\boxplus_n$ approaches the free additive convolution $\boxplus$ in the following sense.
\begin{proposition}\label{prop:finite_free_add_conv_to_free_add}
Let $(p_n)_{n\in \N}$ and $(q_n)_{n\in \N}$ be sequences of polynomials  in $\mathbb R[z]$ whose roots are contained in some fixed interval $[-C,C]$. Suppose that $\deg p_n = \deg q_n = n$ and the empirical distributions of zeroes of $p_n$ and $q_n$ converge weakly to certain probability measures $\nu$ and $\rho$ on $[-C,C]$.   Then, the empirical distribution of zeroes of $p_n\boxplus_n q_n$ (which is also real-rooted by~\cite[Theorem~1.3]{marcus_spielman_srivastava}) converges weakly to $\nu\boxplus \rho$.
\end{proposition}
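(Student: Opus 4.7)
The plan is to prove the proposition via the machinery of \emph{finite free cumulants}, developed by Arizmendi and Perales. To each monic polynomial $p$ of degree $n$ with real roots one associates a sequence of real scalars $\kappa_1^{(n)}(p),\ldots,\kappa_n^{(n)}(p)$, obtained by a Möbius-type inversion from the normalized elementary symmetric polynomials of its roots. These cumulants are designed so as to linearize $\boxplus_n$ and, simultaneously, to recover in the large-$n$ limit the classical free cumulants of the empirical root distribution.

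I would proceed in three steps. First, using the explicit coefficient formula \eqref{eq:finite_free_add_conv_def}, verify the linearization identity $\kappa_k^{(n)}(p \boxplus_n q) = \kappa_k^{(n)}(p) + \kappa_k^{(n)}(q)$ for every $1 \le k \le n$; this is a purely combinatorial fact and is essentially the defining property of finite free cumulants. Second, show that the hypotheses imply $\kappa_k^{(n)}(p_n) \to \kappa_k(\nu)$ and $\kappa_k^{(n)}(q_n) \to \kappa_k(\rho)$ for every fixed $k$, where $\kappa_k(\nu)$ denotes the $k$-th classical free cumulant. Third, observe that the roots of $p_n \boxplus_n q_n$ remain in a bounded interval (say $[-2C,2C]$, as $p_n \boxplus_n q_n$ is the expected characteristic polynomial of a random sum $A_n + U_n B_n U_n^{*}$ of Hermitian matrices with spectra in $[-C,C]$), so the empirical measures $\mu\lsem p_n \boxplus_n q_n \rsem$ form a tight sequence. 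By Steps~1 and~2, every weak subsequential limit $\tau$ satisfies $\kappa_k(\tau) = \kappa_k(\nu) + \kappa_k(\rho) = \kappa_k(\nu \boxplus \rho)$ for every $k$; since $\nu \boxplus \rho$ is compactly supported and hence uniquely determined by its free cumulants, $\tau = \nu \boxplus \rho$, forcing the full weak convergence.

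The main obstacle is Step~2: controlling the $n$-dependence in the finite-free moment–cumulant transition. In contrast to the purely combinatorial free case, the finite-free inversion sums over \emph{all} partitions of $\{1,\ldots,k\}$ with weights depending on $n$, and one must demonstrate that the crossing-partition contributions are suppressed by at least one negative power of $n$, while the non-crossing ones recover the usual free moment–cumulant formula. The uniform root bound $[-C,C]$ yields uniform bounds on the moments (and hence on all symmetric polynomials of the roots), which is exactly what is needed to justify taking the limit termwise and uniformly on any fixed bounded range of $k$; this uniformity is also what allows one to identify $\tau$ on the basis of finitely many cumulants at a time.
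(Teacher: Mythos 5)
Your proposal is correct and takes essentially the same route as the paper: the paper's proof consists of citing \cite[Theorem~5.4 and Corollary~5.5]{arizmendi_perales} (which is precisely the finite-free-cumulant linearization and the convergence of finite free cumulants to free cumulants that you outline in Steps~1--2), together with the root bound from \cite[Theorem~1.3]{marcus_spielman_srivastava} and the moment method on a compact interval, exactly as in your Step~3. The only difference is that you propose to re-derive the cited cumulant asymptotics (suppression of crossing-partition terms) rather than quote them.
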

Indeed,  $\mu\lsem p_n \rsem\to \nu$ and $\mu\lsem q_n \rsem\to \rho$ (weakly) implies that the moments of $\mu\lsem p_n \rsem$ and $\mu\lsem q_n \rsem$ converge to the corresponding moments of $\nu$ and $\rho$ (since all measures are concentrated on a fixed interval).  By the results cited above, this implies that the moments of the empirical distribution of zeroes of $p_n\boxplus_n q_n$ converge to those of $\nu\boxplus \rho$. This implies that the probability measures $\mu\lsem p_n \boxplus_n q_n \rsem$ (which are concentrated on $[-2C,2C]$ by~\cite[Theorem 1.3]{marcus_spielman_srivastava}) converge weakly to $\nu \boxplus \rho$.
\begin{proof}[Proof of Theorem~\ref{theo:heat_flow_zeroes_algebraic}.]
It is known~\cite{marcus_spielman_srivastava} that $\boxplus_n$ commutes with differentiation in the sense that $\partial_z (p(z) \boxplus_n q(z))= (\partial_z p(z)) \boxplus_n q(z)$ for arbitrary polynomials $p(z)$ and $q(z)$ of degree at most $n$. By bilinearity of $\boxplus_n$ it follows that it also commutes with the operator $\exp\{-\frac s2  \partial_z^2\}$, for all $s\in \R$.  Hence,
\begin{align*}
g_{n} (z; -s)
&
=
\eee^{-\frac s2  \partial_z^2} P_{n}(z)
=
\eee^{-\frac s2  \partial_z^2} \left( P_{n} (z) \boxplus_n z^n \right)\\
&=
P_{n}(z)\boxplus_n \left(\eee^{-\frac s2  \partial_z^2}  z^n\right)
=
P_n(z) \boxplus_n \He_n\left(\frac{z}{\sqrt s}\right) s^{n/2},
\end{align*}
where we used~\eqref{eq:eq:hermite_exp_on_z^n_with_sigma} in the last equality.
We take $s = r^2/ n$ and let $n\to\infty$.  By a classical result, the empirical distribution of zeroes of the Hermite polynomial $\He_n(z\sqrt {n}/r)$ converges weakly to the Wigner distribution with the density $x\mapsto \frac 1 {2\pi r^2} \sqrt{4r^2-x^2}$ on the interval $[-2r,2r]$; see, e.g., \cite{ullman0,gawronski,kornyik_michaletzky}. Applying Proposition~\ref{prop:finite_free_add_conv_to_free_add} completes the proof.
\end{proof}
\begin{remark}
Tao~\cite{tao_blog1} derived the following system of differential equations satisfied by the roots $z_{1:n}(s),\ldots, z_{n:n}(s)$ of the polynomial $g_n(z; -s)$:
\begin{equation}\label{eq:dyson_without_noise}
\partial_s z_{i:n} (s) = \sum_{\substack{k\in \{1,\ldots, n\}\\ k\neq i}} \frac{1}{z_{i:n}(s) - z_{k:n}(s)},
\qquad
i=1,\ldots, n,
\end{equation}
and proved that the solutions are well-defined for $s\geq 0$ but may run into a singularity for $s\leq 0$.
These equations are Dyson's Brownian motions with vanishing variance (or with $\beta= \infty$); see~\cite[Theorem~4.3.2]{anderson_etal_book}. Therefore, one can view Theorem~\ref{theo:heat_flow_zeroes_algebraic} as the $\beta= \infty$ case of~\cite[Proposition~4.3.10]{anderson_etal_book}. In this form, it has been established in~\cite[Theorem~1.1]{voit_werner2}.
For other results relating Dyson's Brownian motions to free convolutions we refer to~\cite{andraus2012,dumitriu_edelman_large_beta,gorin_kleptsyn,voit_woerner1,voit_werner2}.
\end{remark}

\subsubsection{Heat flow acting on trigonometric polynomials}
Let us now state analogous results for trigonometric polynomials.
For every even number $n=2d$, $d\in \N$, let $T_{n} (\theta) := \sum_{\ell= -d}^d c_{\ell:n} \eee^{\ii \ell \theta}$ be a trigonometric polynomial with complex coefficients $c_{\ell:n}$ satisfying  $c_{-\ell:n} = \overline{c_{\ell:n}}$ for all $\ell\in \{-d,\ldots, d\}$ (meaning that $T_n(\theta)$ takes real values for real $\theta$). Moreover, assume that $T_{n}(\theta)$ is real-rooted meaning that it has $n$ real roots $\theta_{1; n},\ldots, \theta_{n; n}$ (counting multiplicities) and that its empirical distribution of zeroes converges weakly as $n\to\infty$. The latter assumption will be written in the following form:
\begin{equation}\label{eq:empirical_distr_zeroes_conv}
\nu\lsem T_{n}\rsem := \frac {1}{n} \sum_{j=1}^{n} \delta_{\eee^{\ii \theta_{j;n}}} \toweak \nu
\end{equation}
weakly on the unit circle $\bT$, for some probability measure $\nu$ on $\bT$. Consider the heat equation on the real line with periodic initial condition given by $T_n(\theta)$:
\begin{equation}\label{eq:heat_equation}
\partial_t f_{n}(\theta; t) = \frac 12 \partial^2_\theta f_{n}(\theta;t),
\qquad
f_n(\theta; 0) = T_{n}(\theta),
\qquad
\theta\in \R, \; t\in \R.
\end{equation}
Its solution is explicit and can be written as
\begin{equation}\label{eq:heat_eq_solution}
f_{n} (\theta; t)
=
\eee^{\frac t2  \partial_\theta^2} T_{n}(\theta)
=
\sum_{\ell= -d}^d c_{\ell:n} \eee^{\frac t2  \partial_\theta^2} \eee^{\ii \ell \theta}
=
\sum_{\ell = -d}^d  c_{\ell:n} \eee^{-\frac t2 \ell^2} \eee^{\ii \ell \theta}.
\end{equation}
For every $t\in \R$, the function  $\theta \mapsto f_{n} (\theta; t)$ is a trigonometric polynomial. In particular, the solution makes sense both for $t\geq 0$ and for $t<0$. In the sequel we shall focus on the latter case, which corresponds to the backward heat equation.
For $s>0$, the trigonometric polynomial $f_n(\theta; -s)$ remains real-rooted.  This claim can be deduced from the P\'olya--Benz theorem~\cite[Corollary~1.3]{aleman_beliaev_hedenmalm}. Another proof can be found in~\cite{tao_blog2} (where differential equations analogous to~\eqref{eq:dyson_without_noise} in the circular setting are derived; see also~\cite{cepa_lepingle_circle,hobson_werner} for the corresponding stochastic differential equations).

\begin{theorem}\label{theo:heat_flow_zeroes}
Take some $\sigma^2>0$. In the setting described above, including Assumption~\eqref{eq:empirical_distr_zeroes_conv}, the empirical distribution $\nu\lsem f_{n}(\cdot\,;-\sigma^2/n)\rsem$ of zeroes of the solution $f_{n}(\theta; -\sigma^2/n)$ of the heat equation at time $t_{n} := -\sigma^2/n$, converges weakly (as $n=2d\to\infty$) to the free unitary convolution $\nu \boxtimes \fnorm_{\sigma^2}$ of $\nu$ and the free unitary  normal distribution $\fnorm_{\sigma^2}$.
\end{theorem}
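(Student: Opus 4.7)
The plan is to follow the blueprint of the proof of Theorem~\ref{theo:heat_flow_zeroes_algebraic}, replacing the finite free additive convolution $\boxplus_n$ by its multiplicative counterpart $\boxtimes_n$ defined in~\eqref{eq:finite_free_mult_conv_def}, with the unitary Hermite polynomial $H_n(\,\cdot\,;\sigma^2/n)$ playing the role played earlier by the classical Hermite polynomial.

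First, I would identify the backward heat flow on trigonometric polynomials with a $\boxtimes_n$-convolution. Let $P_n(z) = \sum_{j=0}^n \alpha_{j:n}\, z^j$ be the algebraic polynomial associated with $T_n$ via~\eqref{eq:algebraic_trigonometric}, so that $\alpha_{j:n}=c_{j-d:n}$ for $n=2d$. Since $\partial_\theta$ acts on $e^{\ii\ell\theta}$ by multiplication by $\ii\ell$, formula~\eqref{eq:heat_eq_solution} at $t=-\sigma^2/n$ becomes
\[
f_n(\theta;-\sigma^2/n)=\sum_{j=0}^n \alpha_{j:n}\, e^{\sigma^2(j-d)^2/(2n)}\, e^{\ii(j-d)\theta}.
\]
Substituting the identity $(j-d)^2 = n^2/4 - j(n-j)$ (valid for $n=2d$) and comparing with the coefficients of $H_n(z;\sigma^2/n)$ from~\eqref{eq:hermite_poly_circ_def1} via the convolution formula~\eqref{eq:finite_free_mult_conv_def}, one obtains the key identity
\[
f_n(\theta;-\sigma^2/n) \;=\; e^{\sigma^2 n/8}\, e^{-\ii n\theta/2}\, \bigl(P_n \boxtimes_n H_n(\,\cdot\,;\sigma^2/n)\bigr)(e^{\ii\theta}).
\]
Consequently the real zeros $\theta$ of $f_n(\cdot;-\sigma^2/n)$ correspond bijectively via $z=e^{\ii\theta}$ to the zeros of $P_n \boxtimes_n H_n(\,\cdot\,;\sigma^2/n)$, and all $n$ of these zeros lie on $\bT$ because $f_n(\cdot;-\sigma^2/n)$ is real-rooted by the P\'olya-Benz theorem.

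Second, I would invoke a multiplicative analogue of Proposition~\ref{prop:finite_free_add_conv_to_free_add} on $\bT$: given two sequences $(p_n)$, $(q_n)$ of degree-$n$ polynomials whose zeros lie on $\bT$ and whose empirical zero distributions converge weakly on $\bT$ to probability measures $\nu$ and $\rho$ respectively, the empirical zero distribution of $p_n \boxtimes_n q_n$ (whose zeros also lie on $\bT$) converges weakly to $\nu \boxtimes \rho$. Granting this, I apply it with $p_n=P_n$ (limit $\nu$, by assumption~\eqref{eq:empirical_distr_zeroes_conv}) and $q_n = H_n(\,\cdot\,;\sigma^2/n)$ (limit $\fnorm_{\sigma^2}$, by Theorem~\ref{theo:weak_conv_zeroes_to_free_normal}) to conclude.

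The main obstacle is establishing this multiplicative convergence statement on $\bT$. A natural approach mirrors the additive case treated in Mirabelli~\cite{mirabelli_diss} and Arizmendi-Perales~\cite{arizmendi_perales}: since $\bT$ is compact, weak convergence is equivalent to convergence of the complex moments $\int z^k \dint\mu$ for all $k\in\Z$. The coefficient formula~\eqref{eq:finite_free_mult_conv_def} expresses the elementary symmetric functions of the zeros of $p_n\boxtimes_n q_n$ as the normalized products of those of $p_n$ and $q_n$; via Newton-Girard these convert to power sums, and one must check that the resulting algebraic relations among moments reduce, as $n\to\infty$, to those characterizing the free multiplicative convolution $\boxtimes$ through the $S$-transform (see~\cite{voiculescu_nica_dykema_book,nica_speicher_book}). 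Verifying this moment-level cancellation uniformly in the sequences at hand, analogous to the computation sketched after Theorem~\ref{theo:weak_conv_zeroes_to_free_normal}, is the delicate step.
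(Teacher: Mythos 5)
Your proposal follows essentially the same route as the paper: the identity $f_n(\theta;-\sigma^2/n)=\eee^{\sigma^2 n/8}\,\eee^{-\ii n\theta/2}\bigl(P_n\boxtimes_n H_n(\,\cdot\,;\sigma^2/n)\bigr)(\eee^{\ii\theta})$ is exactly the paper's reduction, and the limiting zero distributions of the two factors are supplied by Assumption~\eqref{eq:empirical_distr_zeroes_conv} and Theorem~\ref{theo:weak_conv_zeroes_to_free_normal}. The ``delicate step'' you flag --- passage from $\boxtimes_n$ to $\boxtimes$ for polynomials with all zeroes on $\bT$ --- is not an obstacle: it is precisely Proposition~\ref{prop:finite_free_mult_conv_to_free_mult}, stated in the paper and quoted from \cite{arizmendi_garza_vargas_perales} (see also \cite{kabluchko_rep_diff_free_poi}), so no moment-level verification is needed and your argument is complete once you cite that proposition instead of attempting to reprove it.
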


We again rely on a result from finite free probability~\cite{marcus_spielman_srivastava,marcus}.
Recall from~\eqref{eq:finite_free_mult_conv_def} the definition of the finite free multiplicative convolution $\boxtimes_{n}$. The following fact proved in~\cite[Proposition~3.4]{arizmendi_garza_vargas_perales} (see also~\cite[Proposition~2.9]{kabluchko_rep_diff_free_poi} for the version stated here) states that,  in a suitable sense,  $\boxtimes_n$ converges to the free multiplicative convolution $\boxtimes$ as $n\to\infty$.
\begin{proposition}\label{prop:finite_free_mult_conv_to_free_mult}
Let $(p_n(z))_{n\in \N}$ and $(q_n(z))_{n\in \N}$ be sequences of polynomials in $\C[z]$ with $\deg p_n = \deg q_n = n$. Suppose that all roots of $p_n$ and $q_n$ are located on the unit circle and that,  as $n\to\infty$,  the empirical distributions of zeroes $\mu\lsem p_{n}\rsem$ and $\mu\lsem q_{n}\rsem$ converge weakly to two probability  measures $\nu$ and $\rho$ on $\bT$.
Then, all roots of the polynomial $p_n \boxtimes_n q_n$ are also located on $\bT$ and $\mu \lsem p_n \boxtimes_n q_n\rsem$ converges weakly to $\nu \boxtimes \rho$.
\end{proposition}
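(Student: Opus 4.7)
The plan is to establish both conclusions --- the preservation of zeros on $\bT$ and the weak convergence --- by combining a finite free probability rigidity statement with the moment method. Since $\bT$ is compact, weak convergence of probability measures on $\bT$ is equivalent to convergence of all integer moments $m_k(\mu) := \int_\bT z^k \dint \mu(z)$, $k\in\Z$; moreover, by definition of $\boxtimes_n$ in \eqref{eq:finite_free_mult_conv_def} and the symmetry $\mu\lsem p\rsem \mapsto \mu\lsem p^*\rsem$ under $z\mapsto \bar z$, it suffices to treat $k\in\N_0$.

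For the first assertion (roots on $\bT$), I would invoke the multiplicative analogue of the Grace--Walsh--Szeg\H{o} theorem due to Marcus--Spielman--Srivastava~\cite{marcus_spielman_srivastava}: if all roots of $p_n$ and $q_n$ lie on $\bT$, then $p_n$ and $q_n$ are characteristic polynomials of unitary matrices $U, V$, and one can realize
$$
p_n \boxtimes_n q_n (z) \;=\; \mathbb{E}_{Q}\bigl[\det(zI - U\,QVQ^{-1})\bigr],
$$
where $Q$ is Haar-distributed on the unitary group. Since $UQVQ^{-1}$ is unitary, each individual determinant has roots on $\bT$, and the expected characteristic polynomial inherits this via the finite free ``common interlacing'' argument of~\cite{marcus_spielman_srivastava}.

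For the convergence, I would exploit Vieta's formula: writing the roots of $p_n$ (respectively $q_n$) as $\lambda_1,\ldots,\lambda_n$ (respectively $\mu_1,\ldots,\mu_n$), the coefficient of $z^j$ in $p_n\boxtimes_n q_n$ is
$$
(-1)^{n-j}\,\frac{e_{n-j}(\lambda)\,e_{n-j}(\mu)}{\binom{n}{j}},
$$
with $e_k$ the elementary symmetric polynomial. The Newton--Girard identities convert these coefficients into the normalized power-sum moments of the roots $\nu_1,\ldots,\nu_n$ of $p_n\boxtimes_n q_n$. Thus each moment $m_k(\mu\lsem p_n\boxtimes_n q_n \rsem)$ is a universal polynomial in the normalized elementary symmetric functions $e_\ell(\lambda)/\binom{n}{\ell}$ and $e_\ell(\mu)/\binom{n}{\ell}$ for $\ell\le k$; and each such normalized elementary symmetric function is itself a polynomial in the moments $m_1(\mu\lsem p_n\rsem),\ldots,m_\ell(\mu\lsem p_n\rsem)$, up to an error of order $O(1/n)$.

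The main obstacle is showing that, once the dust settles, the leading-order combinatorial expression obtained this way coincides with the free multiplicative convolution's moment--cumulant formula for $\nu\boxtimes\rho$. The cleanest route is to package the moments into a \emph{finite $S$-transform} $S_n(\mu\lsem p_n\rsem)(z)$, defined from the moment generating series $\psi_n(z)=\sum_{k\ge 1} m_k(\mu\lsem p_n\rsem) z^k$ by $S_n(z) = \frac{z+1}{z}\,\psi_n^{\langle -1\rangle}(z)$, and to verify the approximate multiplicativity
$$
S_n\bigl(\mu\lsem p_n\boxtimes_n q_n\rsem\bigr)(z) \;=\; S_n\bigl(\mu\lsem p_n\rsem\bigr)(z)\cdot S_n\bigl(\mu\lsem q_n\rsem\bigr)(z) \;+\; O(1/n)
$$
uniformly on compact subsets of a fixed neighborhood of $0$. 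This is a formal identity among coefficients that reduces to the Newton--Girard combinatorics above; the $O(1/n)$ error comes from writing out $\binom{n}{j}^{-1}$ versus $(j!/n^j)$ and collecting subleading terms. Since weak convergence on $\bT$ of $\mu\lsem p_n\rsem \to \nu$ and $\mu\lsem q_n\rsem \to \rho$ implies uniform convergence of the corresponding $\psi$-series and hence of their inverses and of $S_n$, Voiculescu's characterization of $\boxtimes$ via multiplicativity of the $S$-transform yields $m_k(\mu\lsem p_n\boxtimes_n q_n\rsem) \to m_k(\nu\boxtimes\rho)$ for every $k$, completing the proof.
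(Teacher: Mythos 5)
First, note that the paper does not prove this proposition at all: it is quoted from \cite[Proposition~3.4]{arizmendi_garza_vargas_perales} (see also \cite[Proposition~2.9]{kabluchko_rep_diff_free_poi}), where the argument goes through finite free cumulants. So you are attempting a self-contained proof, and your sketch has two genuine gaps. For the location of the roots, the identity $p_n\boxtimes_n q_n(z)=\mathbb{E}_Q\det(zI-UQVQ^{-1})$ with Haar unitary $Q$ is fine, but the conclusion does not follow ``via the finite free common interlacing argument'' of \cite{marcus_spielman_srivastava}: that machinery (real stability, interlacing families) is built for real-rooted polynomials, and an average over a continuum of polynomials each having unimodular roots need not have unimodular roots without a genuine interlacing structure on the circle, which you do not supply. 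The standard and correct route is Szeg\H{o}'s composition theorem: up to the harmless sign change $z\mapsto -z$ in one factor, $\boxtimes_n$ is exactly the Szeg\H{o} composition, so roots in the closed unit disk are preserved, and combining this with the behaviour of $\boxtimes_n$ under the coefficient reversal $j\mapsto n-j$ (equivalently, under $z\mapsto 1/z$) pins the roots to $\bT$; this is how the circle case is handled in the literature (see \cite{mirabelli_diss,marcus_spielman_srivastava}).

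For the convergence, the step you call the ``main obstacle'' is precisely where the proof is missing: the approximate multiplicativity $S_n(p_n\boxtimes_n q_n)=S_n(p_n)S_n(q_n)+O(1/n)$ is asserted to ``reduce to Newton--Girard combinatorics,'' but this identity \emph{is} the content of the proposition and needs an actual proof (it is nontrivial; an exact finite $S$-transform theory exists but is recent and not elementary). Worse, the $S$-transform route breaks down under the stated hypotheses: $\nu$ or $\rho$ may have vanishing first moment (e.g.\ the Haar measure on $\bT$ is a perfectly admissible limit), in which case $\psi_\nu'(0)=0$, $\psi_\nu^{-1}$ and $S_\nu$ do not exist, and even if each $p_n$ has nonzero mean the radius on which $\psi_{\mu\lsem p_n\rsem}$ is invertible can shrink to $0$, so ``uniformly on a fixed neighborhood of $0$'' is not available. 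The robust way to carry out your moment-method plan is the one used in the cited references \cite{arizmendi_perales,arizmendi_garza_vargas_perales}: express the moments (equivalently the finite free cumulants) of $\mu\lsem p_n\boxtimes_n q_n\rsem$ as explicit universal polynomials in the normalized coefficients of $p_n$ and $q_n$, show these polynomial expressions converge coefficientwise to the free moment--cumulant formulas for $\boxtimes$, and then use compactness of $\bT$ (moments determine measures, moment convergence equals weak convergence) exactly as you do at the start. As written, your argument establishes the easy reductions but leaves both core steps unproved, and one of them (the $S$-transform inversion) would actually fail for admissible limits.
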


\begin{proof}[Proof of Theorem~\ref{theo:heat_flow_zeroes}.]
For $t\in \R$ we consider the algebraic polynomial $P_{2d}(z;t)$ in the complex variable $z$ defined by
\begin{equation}\label{eq:def_P_2d}
\frac{P_{2d}(\eee^{\ii \theta};t)}{\eee^{\ii d\theta}} = f_{2d}(\theta; t),
\end{equation}
More concretely, it follows from~\eqref{eq:heat_eq_solution} that
$$
P_{2d}(z;t)
=
\sum_{\ell = -d}^d  c_{\ell:2d} \eee^{-\frac t2 \ell^2}  z^{\ell+d}
=
\sum_{j = 0}^{2d}  c_{j-d:2d} \eee^{-\frac t2 (j-d)^2}  z^{j}
=
\sum_{j = 0}^{2d}  c_{j-d:2d} \eee^{-\frac t2 (j^2 - 2j d + d^2)}  z^{j}.
$$
It follows from~\eqref{eq:finite_free_mult_conv_def} and~\eqref{eq:hermite_poly_circ_def1} that for $t= t_{2d} =  -\sigma^2/(2d)<0$ we can write
\begin{align*}
P_{2d}(z;t)
&=
\eee^{-\frac {td^2} 2} \left(\sum_{j=0}^{2d}   (-1)^{2d-j} \binom {2d} j \eee^{ \frac {t}{2} j (2d-j)} z^j\right) \boxtimes_{2d} \left(\sum_{j = 0}^{2d}  c_{j-d:2d} z^j\right)\\
&=
\eee^{-\frac {td^2} 2} \left(H_{2d}\left(z; \frac{\sigma^2}{2d}\right) \boxtimes_{2d} P_{2d}(z;0)\right).
\end{align*}
Recall from Theorem~\ref{theo:weak_conv_zeroes_to_free_normal} that the empirical distribution of zeroes of $H_{2d}(z; \sigma^2/(2d))$ converges weakly to $\fnorm_{\sigma^2}$, while  the empirical distribution of zeroes of $P_{2d}(z;0)$ converges to $\nu$ by~\eqref{eq:def_P_2d} and~\eqref{eq:empirical_distr_zeroes_conv}:
$$
\frac {1}{2d} \sum_{\substack{z\in \C:\, H_{2d}(z; \sigma^2/(2d)) = 0}} \delta_z
\toweakd
\fnorm_{\sigma^2},
\qquad
\frac {1}{2d} \sum_{\substack{z\in \C:\, P_{2d}(z; 0) = 0}} \delta_z
=
\frac {1}{2d} \sum_{j=1}^{2d} \delta_{\eee^{\ii \theta_{j;2d}}}
\toweakd
\nu.
$$
To complete the proof, apply Proposition~\ref{prop:finite_free_mult_conv_to_free_mult}.
\end{proof}

\begin{figure}[t]
	\centering
	\includegraphics[width=0.49\columnwidth]{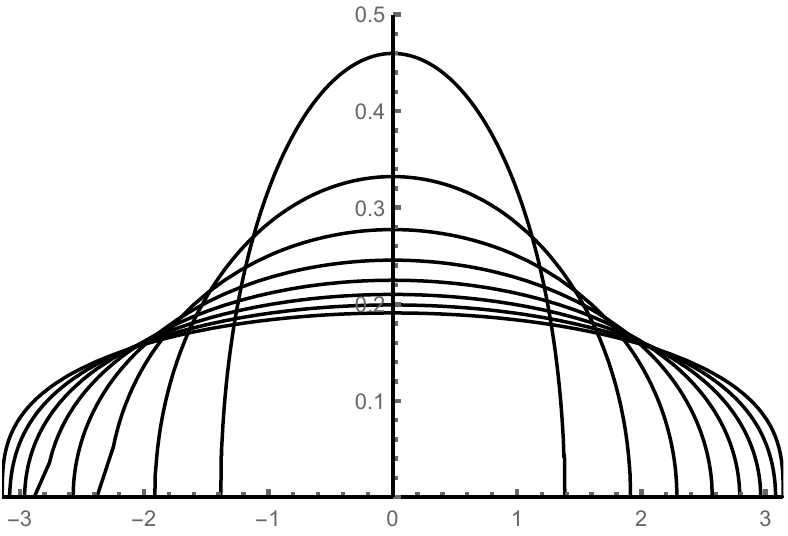}
	\includegraphics[width=0.49\columnwidth]{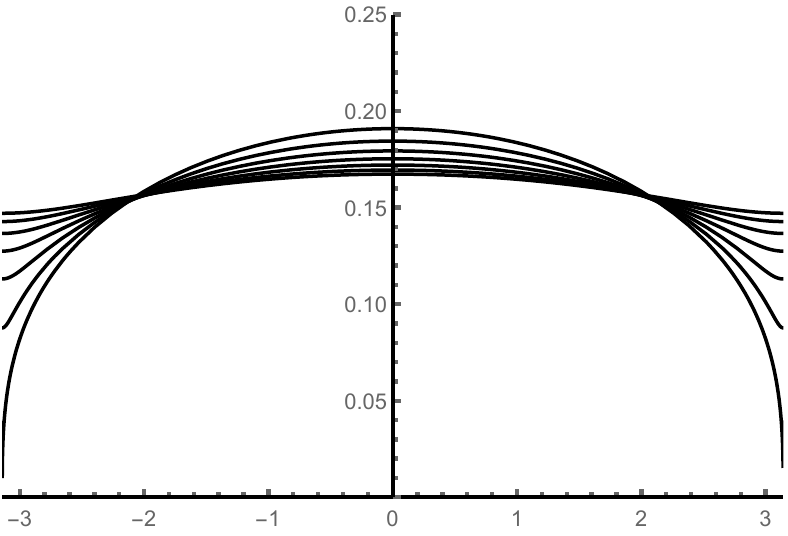}
	\caption{Densities of the free unitary normal distributions $\fnorm_{\sigma^2}$. The plots show the functions $\theta\mapsto f_{\sigma^2}(\eee^{\ii \theta})$ for $\theta \in [-\pi,\pi]$.  Left: $\sigma^2 = 0.5, 1, 1.5, \ldots, 4$. Right: $\sigma^2 = 4, 4.5,5, \ldots, 7$.}
\label{fig:densities}
\end{figure}

\subsection{Free unitary normal distribution and its properties}\label{sec:free_unitary_normal}
In this section we recall the notion of free multiplicative convolution $\boxtimes$ of probability measures on the unit circle which was introduced by Voiculescu in~\cite{voiculescu_symmetries} and~\cite{voiculescu_multiplication}; see also~\cite[\S~3.6]{voiculescu_nica_dykema_book}.
Also, we recall the definition of the free unitary normal distribution, which was introduced by Bercovici and Voiculescu~\cite{bercovici_voiculescu_levy_hincin} and studied in~\cite{biane,biane_segal_bargmann,zhong_free_brownian,zhong_free_normal,cebron}. Finally, we shall state some properties of this distribution.

Given two probability measures $\mu_1$ and $\mu_2$ on the unit circle $\bT$, it is possible to construct a $C^*$-probability space and two mutually free unitaries $u_1$ and $u_2$ with spectral distributions $\mu_1$ and $\mu_2$, respectively. Then, the free multiplicative convolution of $\mu_1$ and $\mu_2$, denoted by  $\mu_1 \boxtimes \mu_2$, is the spectral distribution of $u_1 u_2$. It does not depend on the choice of $u_1$ and $u_2$; see~\cite{voiculescu_nica_dykema_book} for more details.
Free multiplicative convolution is linearized by the $S$-transform which is defined as follows. The \emph{$\psi$-transform} of a probability measure $\mu$ on the unit circle $\bT$ is defined by
\begin{equation}\label{eq:psi_trafo_def}
\psi_{\mu}(z)
=
\int_\bT \frac{uz}{1-uz} \mu(\dint u) = \sum_{\ell=1}^\infty z^\ell \int_{\bT} u^\ell \mu(\dint u),
\qquad z\in \bD.
\end{equation}
Note that $\psi_\mu(z)$ is an analytic function on $\bD$ satisfying $\psi_\mu(0) = 0$ and $\Re \psi_\mu(z) > - \frac 12$. These properties characterize the class of $\psi$-transforms; see~\cite[Lemma~2.9]{franz_hasebe_schleissinger_monotone}.

If $\psi_\mu'(0) = \int_\bT u \mu(\dint u) \neq 0$, then the analytic function $\psi_\mu$ has an inverse on some sufficiently small disk around the origin and the \textit{$S$-transform}  of $\mu$ is defined by
\begin{equation}\label{eq:S_transf_def}
S_\mu(z) = \frac {1+z}{z} \psi^{-1}_\mu(z).
\end{equation}
The \emph{free unitary normal distribution} $\fnorm_{\sigma^2}$ with parameter $\sigma^2>0$
introduced in~\cite[Lemmas~6.3]{bercovici_voiculescu_levy_hincin},
is a  probability measure on $\bT$ with
\begin{equation}\label{eq:S_transf_normal}
S_{\fnorm_{\sigma^2}}(z) = \eee^{\sigma^2 (z + \frac 12)}.
\end{equation}
The next theorem summarizes some properties of the free unitary normal distributions; see Figure~\ref{fig:densities} for the plots of their densities.
Almost all of these properties are known from the work of Biane~\cite{biane}, \cite[Proposition~10]{biane_segal_bargmann}; see also the discussion in~\cite[Proposition 2.24]{kemp} and~\cite[Remark~6.8]{levy_schur_weyl} for further pointers to the literature. For similar properties of the free unitary Poisson distribution we refer to~\cite[Section~5]{kabluchko_rep_diff_free_poi}.
\begin{theorem}\label{theo:properties_free_normal}
The density of the free unitary normal distribution $\fnorm_{\sigma^2}$ w.r.t.\ the length measure on $\bT$ is given by
\begin{equation}\label{eq:density_free_normal_zeta}
f_{\sigma^2}(\eee^{\ii \theta}) = \frac {2}{\pi \sigma^2} \Im \zeta_{\sigma^2/4}\left(\frac{\theta - \pi}{2}\right),
\qquad
\theta\in [-\pi, \pi),
\end{equation}
where $\zeta_{\sigma^2/4}(\cdot)$ is the function appearing in Theorem~\ref{theo:riemann_surface_z_i}.  The distribution $\fnorm_{\sigma^2}$ is invariant under complex conjugation meaning that $f_{\sigma^2}(\eee^{\ii \theta}) = f_{\sigma^2}(\eee^{-\ii \theta})$.
\begin{itemize}
\item[(a)] For $\sigma^2>4$, the function $\theta \mapsto f_{\sigma^2}(\eee^{\ii \theta})$ is strictly positive and real analytic on $\R$.
\item[(b)] For $0<\sigma^2<4$, the function $\theta \mapsto f_{\sigma^2}(\eee^{\ii \theta})$ is continuous on its period $[-\pi,\pi]$. It is strictly positive and real-analytic on the interval $(-m_{\sigma^2}, +m_{\sigma^2})$, and vanishes on its complement $[-\pi,\pi]\backslash (-m_{\sigma^2}, +m_{\sigma^2})$, where
    \begin{equation}\label{eq:support_free_normal}
    m_{\sigma^2} := 2 \arcsin \frac \sigma 2 + \frac \sigma 2 \sqrt{4- \sigma^2}= \int_0^{\sigma}\sqrt{4-t^2}\, \dint t.
    \end{equation}
At the points $\pm m_{\sigma^2}$, the function $\theta \mapsto f_{\sigma^2}(\eee^{\ii \theta})$  vanishes and has square-root singularities  with
\begin{equation}\label{eq:square_root_sing}
   f_{\sigma^2}(\eee^{\ii (m_{\sigma^2} - \eps)}) = f_{\sigma^2}(\eee^{\ii (\eps - m_{\sigma^2})})
   \sim
   \frac {1}{\pi \sigma^2}  \sqrt[4]{\frac{4\sigma^2}{4 - \sigma^2}} \cdot \sqrt \eps,
   \qquad
   \eps\downarrow 0.
    \end{equation}
\item[(c)] For $\sigma^2 = 4$, the function $\theta \mapsto f_{\sigma^2}(\eee^{\ii \theta})$ is strictly positive and real-analytic on the interval $(-\pi, \pi)$. At $\theta = \pm \pi$, it vanishes and has cubic-root singularities with
    \begin{equation}\label{eq:critical_cubic_root_sing}
    f_{\sigma^2}(\eee^{\ii (\pi -\eps)}) = f_{\sigma^2}(\eee^{\ii (\eps - \pi)}) \sim \frac{\sqrt 3}{4\pi} \left(\frac{3\eps}{2}\right)^{1/3},
    \qquad
    \eps\downarrow 0.
    \end{equation}
\end{itemize}
\end{theorem}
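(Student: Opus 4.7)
The plan is to derive the density formula (\ref{eq:density_free_normal_zeta}) from the $S$-transform $S_{\fnorm_{\sigma^2}}(w) = \eee^{\sigma^2(w+1/2)}$, and then to read off the support and local behavior from the analytic properties of $\zeta_t$ developed in Theorem~\ref{theo:riemann_surface_z_i}. Using (\ref{eq:S_transf_def}), the inverse $\psi$-transform is $\chi(w) := \psi_{\fnorm_{\sigma^2}}^{-1}(w) = \tfrac{w}{1+w}\,\eee^{\sigma^2(w+1/2)}$. Since $S_{\fnorm_{\sigma^2}}$ is real on $\R$, the measure $\fnorm_{\sigma^2}$ is invariant under complex conjugation, so the Herglotz representation reduces to $f_{\sigma^2}(\eee^{\ii\theta}) = \tfrac{1}{2\pi}\lim_{r\uparrow 1}\Re(1+2\psi_{\fnorm_{\sigma^2}}(r\eee^{\ii\theta}))$. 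Next I introduce the auxiliary variable $\zeta$ through the substitution $\alpha := \tfrac{w}{1+w} = -\eee^{2\ii\zeta}$, equivalently $w = -\tfrac{\eee^{\ii\zeta}}{2\cos\zeta}$; a short computation using $\tfrac{1+\alpha}{1-\alpha} = -\ii\tan\zeta$ shows that the equation $\chi(w) = -\eee^{2\ii\eta}$ with $\eta := (\theta-\pi)/2$ is equivalent (modulo $\pi$-periodicity) to $\zeta - (\sigma^2/4)\tan\zeta = \eta$, i.e., $\zeta = \zeta_{\sigma^2/4}(\eta)$. Imposing that $\eta$ is real forces $\Im(\zeta - t\tan\zeta) = 0$ with $t := \sigma^2/4$, which after a short algebraic manipulation simplifies $\Re w$ to $-\tfrac{1}{2} + \tfrac{\Im\zeta}{2t}$; plugging this into the Herglotz formula yields (\ref{eq:density_free_normal_zeta}). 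The conjugation symmetry $f_{\sigma^2}(\eee^{\ii\theta}) = f_{\sigma^2}(\eee^{-\ii\theta})$ then follows from the identity $\zeta_t(-\bar\eta) = -\overline{\zeta_t(\eta)}$.

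For the support analysis I would examine when the boundary value $\zeta_t(\eta)$ for $\eta\in\R$ has strictly positive imaginary part. Writing $g(\zeta) := \zeta - t\tan\zeta$, one has $g'(\zeta) = 1 - t\sec^2\zeta$, so real critical points of $g$ exist if and only if $t\leq 1$, located at $\pm\zeta_c := \pm\arccos\sqrt{t}$ with critical values $\pm\eta_c$ where $\eta_c := \arccos\sqrt{t} - \sqrt{t(1-t)}$; using $\arcsin\sqrt{t} + \arccos\sqrt{t} = \pi/2$ one checks $\eta_c = (\pi - m_{\sigma^2})/2$. In the supercritical regime $\sigma^2>4$, the equation $\cos^2\zeta = t$ has no real solution, so $g$ has no real critical values, the relevant branch of $g^{-1}$ is holomorphic in a neighborhood of $\R$ in $\overline{\bH}$, and the odd symmetry of $g$ together with $t>1$ give $\zeta_t(0) = \ii y_*$ with $y_*>0$ the unique positive root of $y = t\tanh y$; continuity then forces $\zeta_t(\R)\subset\bH$, so $f_{\sigma^2}$ is strictly positive and real-analytic on all of $\R$. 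In the subcritical regime $0<\sigma^2<4$, the real critical values $\{\pm\eta_c + \pi\ell: \ell\in\Z\}$ partition $\R$ into alternating short intervals of length $2\eta_c$ and long intervals of length $\pi - 2\eta_c = m_{\sigma^2}$; a local branch analysis at the quadratic critical points shows that $\Im\zeta_t = 0$ on each short interval (where $\zeta_t$ follows the middle real branch of $g^{-1}$) while $\Im\zeta_t > 0$ on each long interval, which after the substitution $\theta = 2\eta + \pi$ and $\pi$-periodicity yields exactly the support (\ref{eq:support_free_normal}).

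The singularity asymptotics follow from local expansions of $g$ at its critical points. For $0<\sigma^2<4$, Taylor expanding near $\zeta_c$ gives $g(\zeta) = \eta_c + \tfrac{1}{2}g''(\zeta_c)(\zeta - \zeta_c)^2 + O((\zeta-\zeta_c)^3)$ with $g''(\zeta_c) = -2\sqrt{(1-t)/t}$; inverting yields $\Im\zeta_t(\eta) \sim (t/(1-t))^{1/4}\sqrt{\eta - \eta_c}$ as $\eta\downarrow\eta_c$, and translating via $\theta = m_{\sigma^2} - \eps$ produces (\ref{eq:square_root_sing}). In the critical case $\sigma^2 = 4$ the two real critical points collide at $\zeta_c = 0$, where $g(\zeta) = -\zeta^3/3 + O(\zeta^5)$, so the cube-root inversion in $\bH$ gives $\zeta_t(\eta)\sim \eee^{\ii\pi/3}(3|\eta|)^{1/3}$ for $\eta\downarrow 0$ and $\eee^{2\ii\pi/3}(3|\eta|)^{1/3}$ for $\eta\uparrow 0$; the imaginary part $(\sqrt{3}/2)(3|\eta|)^{1/3}$ then gives (\ref{eq:critical_cubic_root_sing}). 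I expect the main technical hurdle to be the rigorous identification of the branch of $\psi_{\fnorm_{\sigma^2}}^{-1}$ selected by the substitution $\alpha = -\eee^{2\ii\zeta}$ with $\zeta\in\bH$, and in particular verifying that it matches the boundary value of $\psi_{\fnorm_{\sigma^2}}$ at points where $\Im\zeta_t = 0$; this can be handled by observing that both sides of $\Re w = -\tfrac{1}{2} + \Im\zeta/(2t)$ extend continuously from $\eta\in\bH$ to $\eta\in\overline{\bH}$ and invoking uniqueness of analytic continuation.
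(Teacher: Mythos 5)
Your derivation of the density formula is essentially the paper's: the substitution $\alpha=\frac{w}{1+w}=-\eee^{2\ii\zeta}$, i.e.\ $w=-\tfrac12-\tfrac\ii2\tan\zeta$, turning $\psi_{\fnorm_{\sigma^2}}^{-1}(w)=-\eee^{2\ii\eta}$ into $\zeta-\tfrac{\sigma^2}{4}\tan\zeta=\eta$, is exactly how the paper proves its Lemma~\ref{lem:psi_normal}, and your Herglotz/Poisson boundary-value step (with the continuity of $\zeta_t$ on $\overline\bH$ guaranteeing absolute continuity and a continuous density) is the paper's appeal to the standard Poisson-integral lemma; your branch identification via ``valid for $\Im\eta$ large, then analytic continuation, then continuous extension to the boundary'' is also the paper's argument. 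Where you genuinely diverge is in parts (a)--(c): the paper simply quotes the properties of $\zeta_t$ (support, square-root edge, cubic-root edge) from Section~4 of the companion work on the free unitary Poisson distribution, whereas you re-derive them from scratch by analyzing the critical points of $g(\zeta)=\zeta-t\tan\zeta$: real critical points $\pm\arccos\sqrt t$ exist iff $t\le1$, critical values $\eta_c=\arccos\sqrt t-\sqrt{t(1-t)}=(\pi-m_{\sigma^2})/2$, $g''(\zeta_c)=-2\sqrt{(1-t)/t}$ giving the $\sqrt{\eps}$ edge with exactly the constant in~\eqref{eq:square_root_sing}, and the cubic degeneration $g(\zeta)=-\zeta^3/3+O(\zeta^5)$ at $t=1$ giving~\eqref{eq:critical_cubic_root_sing}; I checked your constants and they all match after the change of variables $\theta=2\eta+\pi$. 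This buys self-containedness at the cost of some length; the paper buys brevity by citation. The one place where your sketch is thinner than what a complete proof needs is the global dichotomy behind the support statement: the purely local expansion at the critical points does not by itself show that $\Im\zeta_t\equiv0$ on the whole ``short'' intervals and $\Im\zeta_t>0$ on the whole ``long'' intervals (resp.\ on all of $\R$ when $t>1$). You need a global branch argument -- e.g.\ that the set $\{\eta\in\R:\Im\zeta_t(\eta)=0\}$ is open wherever the corresponding real solution is non-critical (implicit function theorem plus the continuity of the boundary extension), hence clopen between consecutive critical values, combined with one explicitly computed interior value such as $\zeta_t(-\pi/2)$ or your $\zeta_t(0)=\ii y_*$ for $t>1$ -- or an argument that $g(\zeta)=\eta$ has no solutions in $\bH$ for $\eta$ in a gap. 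This is precisely the content the paper outsources to the cited reference, and it is fixable along the lines just indicated, so I would classify it as an incomplete step rather than an error.
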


\begin{proof}[Proof of Theorem~\ref{theo:properties_free_normal}]
We shall show in Lemma~\ref{lem:psi_normal} below that
$$
\psi_{\fnorm_{\sigma^2}}(-\eee^{\ii \theta})
=
\frac {\ii \left(\theta - 2 \zeta_{\sigma^2/4}(\theta/2)\right)}{\sigma^2} -\frac 12
,
\qquad
\theta \in \bH.
$$
Consider the Poisson integral of $\fnorm_{\sigma^2}$ given for $z\in \bD$ by
$$
F_{\fnorm_{\sigma^2}}(z)
:=
\frac {1}{2\pi} \Re \int_\bT \frac{u + z}{u-z} \, \fnorm_{\sigma^2}(\dd u)
=
\frac {1}{2\pi} \Re \int_\bT \frac{1+ z\bar u}{1 - z \bar u} \, \fnorm_{\sigma^2}(\dd u)
=
\frac 1 {2\pi} \Re (1 + 2 \psi_{\fnorm_{\sigma^2}}(\bar z))
.
$$
Since the function $\zeta_{\sigma^2/4}$ admits a continuous extension to $\overline \bH$, the above formula defines $F_{\fnorm_{\sigma^2}}(z)$ as a continuous function on $\overline \bD$. Under these circumstances, it is well known (see, e.g., \cite[Lemma~5.2]{kabluchko_rep_diff_free_poi}) that the density of $\fnorm_{\sigma^2}$ with respect to the length measure on $\bT$ is given by
\begin{align*}
f_{\sigma^2}(\eee^{\ii \theta})
&=
F_{\fnorm_{\sigma^2}}(\eee^{\ii \theta}) =
\frac 1 {2\pi} \Re (1 + 2 \psi_{\fnorm_{\sigma^2}}(\eee^{-\ii \theta}))
=
\frac 1 {2\pi} \Re (1 + 2 \psi_{\fnorm_{\sigma^2}}( - \eee^{\ii(\pi -\theta})))\\
&=
\frac {2}{\pi \sigma^2} \Im \zeta_{\sigma^2/4}\left(\frac{\theta - \pi}{2}\right)
\end{align*}
for all $\theta\in \R$. All other claims follow from the properties of the function $\zeta_{\sigma^2/4}$ derived in~\cite[Section~4]{kabluchko_rep_diff_free_poi}. In particular, Equation~\eqref{eq:square_root_sing} follows  from~\eqref{eq:density_free_normal_zeta} and~\cite[Remark~4.3]{kabluchko_rep_diff_free_poi}, while Equation~\eqref{eq:critical_cubic_root_sing} is a consequence of~\cite[Lemma~4.5]{kabluchko_rep_diff_free_poi}.
\end{proof}

\begin{remark}
It follows from~\eqref{eq:density_free_normal_zeta} and the series expansion of $\zeta_{\sigma^2/4}$ given in~\cite[Theorem~4.11]{kabluchko_rep_diff_free_poi} that
\begin{equation}\label{eq:r_t_taylor_series}
f_{\sigma^2}(\eee^{\ii \theta})
=
\frac 1 {2\pi} + \frac{1}{\pi } \sum_{\ell=1}^\infty \frac{1}{\ell} \eee^{-\ell \sigma^2/2 } q_{\ell-1} (-\ell \sigma^2) \cos(\ell \theta),
\end{equation}
where  $q_m (x) =  \sum_{j=0}^{m} \frac{x^j}{j!} \binom {m+1}{j+1}$, $m\in \N_0$, and the series converges uniformly in $\theta\in \R$. Note that~\eqref{eq:r_t_taylor_series} characterizes the moments of $\fnorm_{\sigma^2}$. In this form, the result be found in~\cite[Lemma~1]{biane}.
\end{remark}

\begin{remark}
The function $\zeta_t$ is closely related to the function $\chi(t,z)$ appearing in~\cite[p.~273]{biane_segal_bargmann}.  A function very similar to $m_{\sigma^2}$ appears in~\cite[Theorem~5.1]{romik_sniady}.
\end{remark}
It follows from~\eqref{eq:r_t_taylor_series} that, as $\sigma^2\to \infty$, the density $f_{\sigma^2}(\eee^{\ii \theta})$ converges to $1/(2\pi)$ uniformly, which implies that the free unitary normal distribution converges to the uniform distribution weakly on $\bT$. On the other hand, the next result states that, as $\sigma^2\to 0$, the free unitary normal  distribution $\fnorm_{\sigma^2}$ can be approximated by a Wigner semicircle distribution supported on a small arc of length $\sim 4 \sigma$ centered at $1$.
\begin{proposition}\label{prop:free_unitary_normal_conv_wigner}
For all $x\in (-2,2)$ we have $\lim_{\sigma^2\downarrow 0} \sigma f_{\sigma^2} (\eee^{\ii \sigma x}) = \frac 1 {2 \pi} \sqrt{4 - x^2}$.
\end{proposition}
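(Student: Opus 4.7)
The plan is to use the density formula
$$f_{\sigma^2}(e^{i\theta}) = \frac{2}{\pi \sigma^2}\,\Im\, \zeta_{\sigma^2/4}\!\left(\tfrac{\theta - \pi}{2}\right)$$
from Theorem~\ref{theo:properties_free_normal} and analyze $\zeta_{t}$ near the singular point $-\pi/2$ as $t = \sigma^2/4 \downarrow 0$. Setting $\theta = \sigma x$, the argument becomes $\xi_\sigma := (\sigma x - \pi)/2$, which lies just above $-\pi/2$ on the real line and collapses onto $-\pi/2$ as $\sigma \to 0$. Since $\zeta_0(\theta) \equiv \theta$, we must do the first nontrivial term of the expansion of $\zeta_{\sigma^2/4}(\xi_\sigma)$ in $\sigma$.

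First I would write $\zeta_{\sigma^2/4}(\xi_\sigma) = -\pi/2 + w(\sigma)$ and use the identity $\tan(-\pi/2 + w) = -\cot w$ to rewrite the defining equation $\zeta - (\sigma^2/4)\tan\zeta = \xi_\sigma$ from Theorem~\ref{theo:riemann_surface_z_i} as
$$w + \frac{\sigma^2}{4}\cot w = \frac{\sigma x}{2}.$$
The natural scaling is $w = \sigma v/2$; then $\cot(\sigma v/2) = 2/(\sigma v) + O(\sigma v)$ turns the equation into
$$v + \frac{1}{v} = x + O(\sigma^2),$$
uniformly for $v$ in compact subsets of $\bC\setminus\{0\}$. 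The limiting equation $v^2 - xv + 1 = 0$ has two roots $v_\pm = (x \pm i\sqrt{4-x^2})/2$ on the unit circle for $|x| < 2$, and since $\Im \zeta_t \geq \Im \theta = 0$ by~\eqref{eq:zeta_properties} and continuity, only the branch $v_0 = (x + i\sqrt{4-x^2})/2$ with $\Im v_0 > 0$ is admissible.

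To make this rigorous I would apply the holomorphic implicit function theorem to $F(v,\sigma) := v + (\sigma/2)\cot(\sigma v/2) - x$, extended by $F(v,0) := v + 1/v - x$, at the point $(v_0, 0)$. The Jacobian $\partial_v F(v_0, 0) = 1 - 1/v_0^2$ is nonzero: writing $v_0 = e^{i\alpha}$ with $2\cos\alpha = x \in (-2, 2)$ gives $\alpha \in (0, \pi)$, so $v_0^2 \neq 1$. Hence $v(\sigma) = v_0 + O(\sigma^2)$, and substituting $\Im w(\sigma) = (\sigma/2)\Im v(\sigma) \sim (\sigma/4)\sqrt{4-x^2}$ into the density formula yields
$$\sigma f_{\sigma^2}(e^{i\sigma x}) = \frac{2}{\pi\sigma}\,\Im\,\zeta_{\sigma^2/4}(\xi_\sigma) \longrightarrow \frac{\sqrt{4-x^2}}{2\pi}.$$
The main obstacle is applying the implicit function theorem at a real $\xi_\sigma$, i.e.\ on the boundary of $\bH$ where $\zeta_t$ is merely continuous. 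I would circumvent this by running the argument for $\xi_\sigma + i\eta$ with $\eta > 0$, where $\zeta_t$ is honestly analytic and the identical algebra gives $v(\sigma, \eta) \to v_0$, and then passing to $\eta \downarrow 0$ using the continuous extension of $\zeta_{\sigma^2/4}$ to $\overline\bH$ from Theorem~\ref{theo:riemann_surface_z_i}.
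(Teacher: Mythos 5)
Your proof is correct and follows essentially the same route as the paper's (admittedly sketched) argument: rescale the defining equation $\zeta - \tfrac{\sigma^2}{4}\tan\zeta = \tfrac{\sigma x - \pi}{2}$ around $-\pi/2$ via $\zeta = -\tfrac\pi2 + \tfrac{\sigma v}{2}$, obtain the limiting quadratic $v + \tfrac1v = x$, and select the root with positive imaginary part, which plugged into \eqref{eq:density_free_normal_zeta} gives the semicircle density. The only difference is that you supply the rigor the paper omits (holomorphic implicit function theorem with the nonvanishing Jacobian $1 - v_0^{-2}$, and identification of the branch with $\zeta_{\sigma^2/4}$ via uniqueness in $\bH$ and approximation from $\xi_\sigma + \ii\eta$, $\eta\downarrow 0$), whereas the paper pins down the branch by first treating $x=0$ through the $\cotanh$ equation.
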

\begin{proof}
We shall only sketch the idea without giving full details. First of all, from~\eqref{eq:support_free_normal} we have $m_{\sigma^2} \sim 2 \sigma$ as $\sigma^2\downarrow 0$. Let us take $x=0$. It has been argued in~\cite[Section~4]{kabluchko_rep_diff_free_poi} that for all $\tau\geq 0$, we have $\zeta_{\sigma^2/4}(-\frac \pi 2 + \ii \tau) = -\frac \pi 2 + \ii \tilde y_{\sigma^2/4}(\tau)$, where $\tilde y= \tilde y_{\sigma^2/4}(\tau)>0$ is the unique positive solution of $\tilde y - \frac 14 \sigma^2 \cotanh \tilde y  = \tau$. For $\tau = 0$ the equation takes the form $\tilde y = \frac 14 \sigma^2 \cotanh \tilde y$ and its unique positive solution satisfies $\tilde y_{\sigma^2/4}(0) \sim \frac \sigma 2$ as $\sigma^2 \downarrow 0$. It follows that $\zeta_{\sigma^2/4}(-\frac \pi 2) = -\frac \pi 2 + \ii \frac \sigma2 + o(\sigma)$, and it follows from~\eqref{eq:density_free_normal_zeta} that $\sigma f_{\sigma^2}(1) \to \frac 1 \pi$ as $\sigma^2 \downarrow 0$. For general $x\in  (-2,2)$, Equation~\eqref{eq:density_free_normal_zeta} expresses $f_{\sigma^2} (\eee^{\ii \sigma x})$ through the value $\zeta = \zeta_{\sigma^2/4}(\frac{\sigma x - \pi}{2})$ which solves the equation
$$
\zeta - \frac{\sigma^2}{4} \tan \zeta = \frac{\sigma x - \pi}{2}.
$$
As $\sigma^2\downarrow 0$, we can search for a solution in the form $\zeta = -\frac \pi 2 + c(x) \sigma + o(\sigma)$ with some unknown $c(x)$ satisfying $c(0) = \frac \ii 2$ by the above analysis of the case $x=0$. Inserting this into the equation for $\zeta$ and taking $\sigma^2\downarrow 0$,  we obtain $4c(x) + \frac 1 {c(x)} = 2x$. The solution is $c(x) = \frac 14 x + \frac 14 \sqrt{x^2 - 4}$. It follows that $\Im \zeta_{\sigma^2/4}(\frac{\sigma x - \pi}{2}) \sim \frac \sigma 4 \sqrt{4 - x^2}$ for $x\in  (-2,2)$, as $\sigma^2\downarrow 0$. Inserting this into~\eqref{eq:density_free_normal_zeta} completes the argument.
\end{proof}

\section{Proofs of Theorems~\ref{theo:weak_conv_zeroes_to_free_normal} and~\ref{theo:hermite_unitary_asympt}} \label{sec:proofs}
In this section we prove Theorems~\ref{theo:weak_conv_zeroes_to_free_normal} and~\ref{theo:hermite_unitary_asympt}. Throughout the proof, we use the following notational convention: $\theta$ denotes a variable ranging in the upper half-plane $\bH$, while $z:= -\eee^{\ii \theta}$ is a variable ranging in the unit disk $\bD$. Most of the time we shall be occupied with the proof of the following result.
\begin{proposition}\label{prop:log_der_limit_small_r}
Fix $\sigma^2>0$. If $r = r(\sigma^2) > 0$ is sufficiently small, then locally uniformly in $|z|\leq r$ we have
\begin{equation}\label{eq:H_n_log_der_limit_proof_1}
\lim_{n\to\infty} \frac 1n \cdot \frac{H_n'(z;\sigma^2/n)}{H_n(z; \sigma^2/n)}
=
\frac{-\eee^{-\ii \theta}}{1+ \eee^{-2 \ii \zeta_{\sigma^2/4}(\theta/2)}}
=
-\eee^{-\ii \theta}\left(\frac \ii 2 \tan \zeta_{\sigma^2/4} (\theta/2) + \frac 12\right).
\end{equation}
\end{proposition}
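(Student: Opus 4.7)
I set $z=-e^{i\theta}$ with $\Im\theta>0$ and analyse
\[
\frac{H_n(-e^{i\theta};\sigma^2/n)}{(-1)^n}=\sum_{j=0}^n\binom{n}{j}\,e^{ij\theta-\sigma^2 j(n-j)/(2n)}
\]
by the saddle-point method, first turning the sum into a one-dimensional integral via a Hubbard--Stratonovich (Gaussian) transform. Using the identity $-\sigma^2 j(n-j)/(2n)=-\sigma^2n/8+\sigma^2(2j-n)^2/(8n)$, the Gaussian trick $e^{y^2/2}=(2\pi)^{-1/2}\int_{\R}e^{-u^2/2+yu}\,du$, Fubini, and the binomial theorem, followed by the rescaling $u=v\sqrt{n}$, give
\[
\frac{H_n(-e^{i\theta};\sigma^2/n)}{(-1)^n}=\sqrt{\tfrac{n}{2\pi}}\int_{\R}e^{n\Psi(v;\theta)}\,dv,\qquad
\Psi(v;\theta):=-\tfrac12\bigl(v+\tfrac{\sigma}{2}\bigr)^{2}+\log\bigl(1+e^{i\theta+\sigma v}\bigr).
\]
At $z=0$ the perturbation $\log(1+e^{i\theta+\sigma v})$ vanishes and $\Psi$ is a pure Gaussian with real critical point $v^*=-\sigma/2$, which will serve as the anchoring case.

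\smallskip

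The next step is to locate the saddle of $\Psi(\cdot;\theta)$ for small $|z|$ and connect it to $\zeta_{\sigma^2/4}$. Writing $\partial_v\Psi(v^*)=0$ and introducing $\rho:=1/(1+e^{i\theta+\sigma v^*})$ gives $v^*=\tfrac{\sigma}{2}-\sigma\rho$; eliminating $v^*$ produces
\[
\log\frac{1-\rho}{\rho}=\sigma^2\Bigl(\tfrac12-\rho\Bigr)+i\theta.
\]
The ansatz $\rho=(1+e^{2i\zeta})^{-1}$ yields $(1-\rho)/\rho=e^{2i\zeta}$ and $\tfrac12-\rho=\tfrac{i}{2}\tan\zeta$, so the equation collapses to $\zeta-\tfrac{\sigma^2}{4}\tan\zeta=\theta/2$, which is exactly the defining equation of $\zeta:=\zeta_{\sigma^2/4}(\theta/2)$ from Theorem~\ref{theo:riemann_surface_z_i}. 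As $|z|\to 0$, \eqref{eq:zeta_limit_behavior} gives $\zeta\approx\theta/2+i\sigma^2/4$, whence $e^{2i\zeta}\to 0$, $\rho\to 1$, and $v^*\to-\sigma/2$, matching the $z=0$ picture. The implicit function theorem applied to $\partial_v\Psi=0$ then shows that $v^*$ depends analytically on $z$ in a neighborhood of $0$, with Hessian $\Psi''(v^*)=-1+O(|z|)$ non-degenerate.

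\smallskip

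Given this picture, I will run a classical steepest-descent argument: deform the real integration line to a contour through $v^*$ along which $\Re\Psi$ has a unique strict maximum at $v^*$. For $|z|\le r$ with $r$ small, the perturbation $\log(1+e^{i\theta+\sigma v})$ grows only linearly in $\Re v$ on horizontal lines (being bounded by $\log(1+|z|e^{\sigma\Re v})$), while the quadratic $-\tfrac12(\Re v)^2$ dominates at infinity, so the tail contribution is $O(e^{-cn})$. Standard Laplace asymptotics then yield
\[
\tfrac1n\log\bigl(H_n(-e^{i\theta};\sigma^2/n)/(-1)^n\bigr)\longrightarrow\Psi(v^*(\theta);\theta)
\]
locally uniformly in $|z|\le r$. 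Both sides being analytic in $z$, Cauchy's integral formula upgrades this to convergence of the logarithmic derivative,
\[
\tfrac1n\,\frac{H_n'(z;\sigma^2/n)}{H_n(z;\sigma^2/n)}\longrightarrow\partial_z\Psi(v^*;\theta)=\frac{-e^{\sigma v^*}}{1-ze^{\sigma v^*}}.
\]
At the saddle, $1-ze^{\sigma v^*}=1+e^{i\theta+\sigma v^*}=1/\rho$ and $e^{\sigma v^*}=e^{-i\theta}(1-\rho)/\rho$, so the right side equals $-e^{-i\theta}(1-\rho)=-e^{-i\theta}/(1+e^{-2i\zeta})$, which is the first form in~\eqref{eq:H_n_log_der_limit_proof_1}; the second follows from $1/(1+e^{-2i\zeta})=e^{i\zeta}/(e^{i\zeta}+e^{-i\zeta})=\tfrac12+\tfrac{i}{2}\tan\zeta$.

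\smallskip

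The main obstacle is the rigorous saddle-point step: constructing the steepest-descent contour and bounding its tails uniformly in $\theta$ on compact subsets of $\{|z|\le r\}$. The smallness of $r$ is essential---at $z=0$ the integrand is purely Gaussian and the real axis is already the steepest-descent contour, and by continuity this persists under small analytic perturbation. A secondary technical point is branch-consistency of the ansatz $\rho=(1+e^{2i\zeta})^{-1}$: one must verify that the chosen branches of $\log((1-\rho)/\rho)$ and of $\zeta_{\sigma^2/4}$ match the one dictated by the implicit function theorem at $z=0$. Both points are handled by the analytic dependence of $v^*(z)$ near the origin anchored at $v^*(0)=-\sigma/2$.
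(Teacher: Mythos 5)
Your reduction is the same one the paper uses: the Gaussian (Hubbard--Stratonovich) identity turns $H_n(-\eee^{\ii\theta};\sigma^2/n)/(-1)^n$ into the one-dimensional integral of $\eee^{n\Psi}$ (your $\Psi(v;\theta)$ is the paper's $S(t;z)$ after $t=\sigma v$), your saddle-point equation and the substitution $\rho=(1+\eee^{2\ii\zeta})^{-1}$ reproduce exactly the paper's identification $t_0(z)=\frac{\ii\sigma^2}{2}\tan\zeta_{\sigma^2/4}(\theta/2)$, and your final algebra for $\partial_z\Psi(v^*;\theta)$ agrees with the paper's computation of $\frac{\dint}{\dint z}S(t_0(z);z)$. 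All of that checks out.

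The genuine gap is precisely at the step you yourself flag and then dispose of with ``by continuity this persists under small analytic perturbation'': the proposition's content is the \emph{locally uniform} convergence in $z$, and your plan obtains it by asserting that the steepest-descent asymptotics hold uniformly on compact subsets of $\{|z|\le r\}$. Making the saddle-point error bounds uniform in the parameter (uniform choice of contour, uniform tail estimates, uniform control of the quadratic approximation) is the hard technical part, and it is not carried out in your sketch; a perturbation argument gives you, for each fixed small $z$, a valid contour and hence \emph{pointwise} asymptotics, but not by itself uniformity of the $o(1)$ terms. The paper deliberately avoids this: it establishes only the pointwise asymptotics of $\frac1n\log|H_n(z;\sigma^2/n)|$ (working with $\log|\cdot|$ so that no branch must be chosen), and then upgrades to locally uniform convergence of the logarithmic derivative via a separate complex-analytic lemma (Lemma~\ref{lem:differentiate_limit_real_parts}: pointwise convergence of the real parts plus the trivial uniform upper bound $\frac1n\log|H_n(z;\sigma^2/n)|\le\log(1+|z|)$ imply, via Harnack's inequality, Egorov's theorem and a Poisson-type integral representation, locally uniform convergence of the derivatives). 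If you want to keep your route, you must either prove the uniform-in-$z$ Laplace asymptotics in detail or import an argument of this kind. A secondary omission: to write $\frac1n\log\bigl(H_n(z;\sigma^2/n)/(-1)^n\bigr)$ as a single-valued analytic function on $|z|\le r$ (which your Cauchy-formula step needs) you must know $H_n(\cdot;\sigma^2/n)$ has no zeroes in the disk; this is the Lee--Yang statement (Lemma~\ref{lem:no_zeroes_in_D}), which your proposal never invokes.
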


As we argued in Remark~\ref{rem:analytic_functions_well_defined}, the right-hand side of~\eqref{eq:H_n_log_der_limit_proof_1} can be considered as an analytic function of $z\in \bD$ including the value $z=0$ (corresponding to $\Im \theta \to +\infty$) where it equals $-\eee^{-\sigma^2/2}$.

\subsection{Proof of Theorems~\ref{theo:weak_conv_zeroes_to_free_normal} and~\ref{theo:hermite_unitary_asympt} assuming Proposition~\ref{prop:log_der_limit_small_r}}
First we need to compute the $\psi$-transforms of the empirical distribution of zeroes of $H_n(z;\sigma^2/n)$ and of the free unitary normal distribution $\fnorm_{\sigma^2}$. This is done in the next two lemmas.
\begin{lemma}\label{lem:psi_mu_n_sigma^2}
The $\psi$-transform of the probability measure $\mu_{n;\sigma^2}:=\frac 1n \sum_{z\in \bT:\, H_n(z; \sigma^2/n) = 0} \delta_z$ is given by
$$
\psi_{\mu_{n;\sigma^2}}(z) = -\frac z n \cdot \frac{H_n'(z; \sigma^2/n)}{H_n(z;\sigma^2/n)},
\qquad z\in \bD.
$$
\end{lemma}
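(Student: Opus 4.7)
\medskip

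\noindent\textbf{Proof plan.} The plan is to compute both sides directly by factoring $H_n(z;\sigma^2/n)$ over its roots and then matching the expressions using the reciprocal symmetry of the unitary Hermite polynomials.

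First, let $z_1,\ldots,z_n$ denote the zeros of $H_n(\cdot;\sigma^2/n)$, which lie on $\bT$ by Lemma~\ref{lem:no_zeroes_in_D}, and write the monic factorization
\[
H_n(z;\sigma^2/n) \;=\; \prod_{j=1}^n (z-z_j).
\]
By the definition~\eqref{eq:psi_trafo_def} of the $\psi$-transform and the fact that $\mu_{n;\sigma^2}=\frac{1}{n}\sum_{j}\delta_{z_j}$, we can write, for $z\in\bD$,
\[
\psi_{\mu_{n;\sigma^2}}(z) \;=\; \frac{1}{n}\sum_{j=1}^n \frac{z_j z}{1-z_j z}.
\]

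Next, I would introduce the auxiliary polynomial $Q(z):=\prod_{j=1}^n(1-z z_j)$ and take its logarithmic derivative:
\[
\frac{Q'(z)}{Q(z)} \;=\; -\sum_{j=1}^n \frac{z_j}{1-z z_j},
\qquad\text{so that}\qquad
\sum_{j=1}^n\frac{z_j z}{1-z z_j} \;=\; -z\,\frac{Q'(z)}{Q(z)}.
\]
The key step is now to identify $Q$ with $H_n$ up to a sign. Factoring out $z^n$ gives $Q(z)=z^n\prod_{j}(1/z-z_j)=z^n H_n(1/z;\sigma^2/n)$, and the reciprocal symmetry of the coefficients of $H_n$ in~\eqref{eq:hermite_poly_circ_def1} (namely $a_j=(-1)^n a_{n-j}$, which can be read off from the invariance of $\binom{n}{j}e^{-\sigma^2 j(n-j)/(2n)}$ under $j\mapsto n-j$) yields the self-reciprocal identity
\[
z^n H_n(1/z;\sigma^2/n) \;=\; (-1)^n H_n(z;\sigma^2/n),
\]
which is also recorded verbatim in the paper just before Section~2.3. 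Consequently $Q(z)=(-1)^n H_n(z;\sigma^2/n)$, the sign cancels in the logarithmic derivative, and
\[
\psi_{\mu_{n;\sigma^2}}(z)
\;=\;\frac{1}{n}\cdot\Bigl(-z\,\frac{Q'(z)}{Q(z)}\Bigr)
\;=\;-\frac{z}{n}\cdot\frac{H_n'(z;\sigma^2/n)}{H_n(z;\sigma^2/n)},
\]
which is the claim.

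The computation is essentially a two-line manipulation; there is no real obstacle beyond noticing which object to differentiate. The only conceptual point is that the natural expression $-\frac{z}{n}\frac{H_n'}{H_n}=\frac{1}{n}\sum_j \frac{z}{z_j-z}$ that one gets from differentiating the monic factorization is \emph{not} term-by-term equal to $\frac{1}{n}\sum_j\frac{z_j z}{1-z_j z}$; the two sums agree only as a whole, and the cleanest way to see this is through the self-reciprocity $z^n H_n(1/z;\sigma^2/n)=(-1)^n H_n(z;\sigma^2/n)$ (equivalently, the invariance of the multiset $\{z_j\}$ under $z_j\mapsto 1/z_j=\overline{z_j}$, using that the zeros lie on $\bT$ and that $H_n$ has real coefficients).
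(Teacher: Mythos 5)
Your proof is correct, and it reaches the identity by a slightly different mechanism than the paper. The paper's proof computes $\overline{\psi_{\mu_{n;\sigma^2}}(\bar z)}$ from the series form of~\eqref{eq:psi_trafo_def}, uses $\bar u = 1/u$ on $\bT$ and the geometric series to recognize $\frac 1n\sum_u \frac{z}{u-z} = -\frac zn \frac{H_n'}{H_n}$, and then removes the conjugation by noting that $H_n$ has real coefficients, so its zero set is conjugation-invariant. You instead work with $\psi_{\mu_{n;\sigma^2}}(z)$ directly, recognize it as $-\frac zn$ times the logarithmic derivative of the reversed polynomial $Q(z)=\prod_j(1-zz_j)=z^nH_n(1/z;\sigma^2/n)$, and then invoke the self-reciprocity $z^nH_n(1/z;\sigma^2/n)=(-1)^nH_n(z;\sigma^2/n)$, which indeed follows from the palindromic coefficients in~\eqref{eq:hermite_poly_circ_def1} and is recorded in the paper. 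The two arguments rest on the same underlying fact — the multiset of zeros is invariant under $u\mapsto 1/u$, which on $\bT$ (Lemma~\ref{lem:no_zeroes_in_D}) coincides with conjugation invariance — but you access it through the coefficient symmetry rather than through realness of the coefficients; your route avoids the series manipulation and the conjugated transform, at the cost of having to verify the palindromic identity explicitly, and your closing remark correctly pinpoints why the naive term-by-term matching of $\frac{z_jz}{1-z_jz}$ with $\frac{z}{z_j-z}$ fails and why the symmetry is genuinely needed.
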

\begin{proof}
Using the definition of the $\psi$-transform given in~\eqref{eq:psi_trafo_def} and the identity $\bar u = 1/u$ for $u\in \bT$, we can write
\begin{align*}
\overline{\psi_{\mu_{n; \sigma^2}}(\bar z)}
&=
\frac 1 n \sum_{\ell=1}^\infty z^\ell  \sum_{\substack{u\in \bT:\\ H_n(u; \sigma^2/n) = 0}} \frac 1 {u^\ell}
=
\frac 1 n \sum_{\substack{u\in \bT:\\ H_n(u; \sigma^2/n) = 0}} \sum_{\ell=1}^\infty \frac {z^\ell} {u^\ell}
=
\frac 1 n \sum_{\substack{u\in \bT:\\ H_n(u; \sigma^2/n) = 0}} \frac{z}{u-z}
\\
&=
-\frac z n \cdot \frac{H_n'(z; \sigma^2/n)}{H_n(z;\sigma^2/n)}.
\end{align*}
To complete the proof, note that the polynomials $H_n(z; \sigma^2/n)$ have real coefficients, implying that its non-real zeroes come in complex-conjugated pairs and hence $\overline{\psi_{\mu_{n; \sigma^2}}(\bar z)} = \psi_{\mu_{n; \sigma^2}}(z)$.
\end{proof}
\begin{lemma}\label{lem:psi_normal}
The $\psi$-transform of the free unitary normal distribution $\fnorm_{\sigma^2}$ with parameter $\sigma^2>0$ is given by
$$
\psi_{\fnorm_{\sigma^2}}(-\eee^{\ii \theta})
=
- \frac \ii 2 \tan \zeta_{\sigma^2/4}(\theta/2)- \frac 12
=
\frac {\ii \left(\theta - 2 \zeta_{\sigma^2/4}(\theta/2)\right)}{\sigma^2} -\frac 12
,
\qquad
\theta \in \bH.
$$
\end{lemma}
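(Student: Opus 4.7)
The plan is to verify the formula directly from the defining relation of $\fnorm_{\sigma^2}$ via its $S$-transform. Recall from \eqref{eq:S_transf_def} and \eqref{eq:S_transf_normal} that
$$
\psi_{\fnorm_{\sigma^2}}^{-1}(w) = \frac{w}{1+w}\,S_{\fnorm_{\sigma^2}}(w) = \frac{w}{1+w}\,\eee^{\sigma^2(w+\tfrac12)}
$$
on a sufficiently small disk around $0$. The strategy is: define a candidate $w = w(\theta)$ in terms of $\zeta_{\sigma^2/4}(\theta/2)$, plug it into the right-hand side of this formula, and verify (by an algebraic manipulation combined with the defining equation of $\zeta_{\sigma^2/4}$) that the result is exactly $-\eee^{\ii\theta}$. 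This will identify $\psi_{\fnorm_{\sigma^2}}(-\eee^{\ii\theta})$ with $w(\theta)$ on some neighborhood, after which analytic continuation to all of $\bH$ finishes the proof.

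Concretely, I would set $\zeta := \zeta_{\sigma^2/4}(\theta/2)$ and $w := -\tfrac{\ii}{2}\tan\zeta - \tfrac12$. A direct computation gives
$$
w = -\frac{\eee^{\ii\zeta}}{2\cos\zeta}, \qquad 1+w = \frac{\eee^{-\ii\zeta}}{2\cos\zeta}, \qquad \frac{w}{1+w} = -\eee^{2\ii\zeta},
$$
while $\sigma^2(w+\tfrac12) = -\tfrac{\ii\sigma^2}{2}\tan\zeta$. Multiplying these together yields
$$
\frac{w}{1+w}\,\eee^{\sigma^2(w+\tfrac12)} = -\exp\!\left\{\ii\!\left(2\zeta - \tfrac{\sigma^2}{2}\tan\zeta\right)\right\} = -\eee^{\ii\theta},
$$
where in the last step I use the defining identity $\zeta - \tfrac{\sigma^2}{4}\tan\zeta = \theta/2$ from Theorem~\ref{theo:riemann_surface_z_i}. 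The very same identity, solved for $-\tfrac{\ii}{2}\tan\zeta$, immediately gives the second equality $-\tfrac{\ii}{2}\tan\zeta - \tfrac12 = \tfrac{\ii(\theta - 2\zeta)}{\sigma^2} - \tfrac12$ stated in the lemma.

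To pass from this identity to the asserted equality of $\psi$-transforms, I would argue as follows. The function $\theta\mapsto w(\theta)$ is analytic on $\bH$; by \eqref{eq:zeta_limit_behavior}, $\zeta_{\sigma^2/4}(\theta/2) = \theta/2 + \ii\sigma^2/4 + o(1)$ as $\Im\theta\to+\infty$, so $\tan\zeta\to\ii$ and hence $w(\theta)\to 0$. On the other side, $-\eee^{\ii\theta}\to 0$ and $\psi_{\fnorm_{\sigma^2}}(0)=0$. Since $\psi_{\fnorm_{\sigma^2}}$ has a nonzero derivative at $0$ (as one sees from \eqref{eq:r_t_taylor_series}, where the $\ell=1$ coefficient is $\eee^{-\sigma^2/2}\neq 0$), it has a local analytic inverse at $0$. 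Thus, for $\Im\theta$ sufficiently large, the identity $\psi_{\fnorm_{\sigma^2}}^{-1}(w(\theta)) = -\eee^{\ii\theta}$ derived above gives $\psi_{\fnorm_{\sigma^2}}(-\eee^{\ii\theta}) = w(\theta)$. Both sides are analytic on $\bH$, so the identity extends to all of $\bH$ by the identity theorem.

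There is no real obstacle here beyond locating the right algebraic manipulation: the product $\tfrac{w}{1+w}\cdot \eee^{\sigma^2(w+1/2)}$ collapses to the single exponential $-\eee^{\ii(2\zeta-(\sigma^2/2)\tan\zeta)}$, after which the equation defining $\zeta_{\sigma^2/4}$ does the rest. The only subtlety worth flagging is keeping track of branches of $\tan$ and of the domain where $\psi_{\fnorm_{\sigma^2}}^{-1}$ is well-defined; these are handled by working near $\Im\theta=+\infty$ and extending by analyticity.
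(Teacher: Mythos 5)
Your proposal is correct and follows essentially the same route as the paper: the same substitution $w=-\tfrac{\ii}{2}\tan\zeta_{\sigma^2/4}(\theta/2)-\tfrac12$, the same algebraic collapse of $\tfrac{w}{1+w}\,\eee^{\sigma^2(w+\frac12)}$ to $-\eee^{\ii\theta}$ via the defining equation $\zeta-\tfrac{\sigma^2}{4}\tan\zeta=\theta/2$, followed by inversion near $w=0$ for large $\Im\theta$ and analytic continuation to $\bH$. The only cosmetic difference is that the paper uses the inversion relation $w=\psi_{\fnorm_{\sigma^2}}\bigl(\tfrac{w}{1+w}\eee^{\sigma^2(w+\frac12)}\bigr)$ directly, whereas you invoke \eqref{eq:r_t_taylor_series} to justify $\psi_{\fnorm_{\sigma^2}}'(0)\neq 0$; that fact is already implicit in the $S$-transform definition (and readable off $\psi^{-1}(w)=w\,\eee^{-\sigma^2/2}+O(w^2)$), so citing the later expansion is unnecessary and best avoided to keep the logical order clean.
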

\begin{proof}
By the definition of $\fnorm_{\sigma^2}$ given in~\eqref{eq:S_transf_normal}, for all complex $w$ with sufficiently small $|w|$ we have
\begin{equation}\label{eq:psi_z_inverse_proof_free_normal}
w = \psi_{\fnorm_{\sigma^2}}\left(\frac w {1+w} \eee^{\sigma^2 (w + \frac 12)}\right).
\end{equation}
Let us use the shorthand $\zeta:= \zeta_{\sigma^2/4}(\theta/2)$ with some $\theta \in \bH$ and put
$$
w:=  - \frac \ii 2 \tan \zeta - \frac 12 = \frac {\ii(\theta - 2 \zeta)}{\sigma^2}  - \frac 12.
$$
If $\Im \theta$ is sufficiently large, then $|w|$ is sufficiently small; see~\eqref{eq:zeta_limit_behavior}.  Also, the definition of $w$ implies that
$$
\frac {w}{1+w} \eee^{\sigma^2 (w+ \frac 12 )}
=
\frac { - \frac \ii 2 \tan \zeta  - \frac 12}{ - \frac \ii 2 \tan \zeta  + \frac 12 } \eee^{\ii (\theta - 2 \zeta)}
=
\frac { - \frac{\eee^{\ii \zeta} -\eee^{-\ii \zeta}}{\eee^{\ii \zeta} + \eee^{-\ii \zeta}}  - 1}{ - \frac{\eee^{\ii \zeta} -\eee^{-\ii \zeta}}{\eee^{\ii \zeta} + \eee^{-\ii \zeta}}  + 1 } \eee^{\ii (\theta - 2 \zeta)}
=
- \eee^{2\ii \zeta}\eee^{\ii (\theta - 2 \zeta)}
=
-\eee^{\ii \theta}.
$$
Inserting this into~\eqref{eq:psi_z_inverse_proof_free_normal} yields $\psi_{\fnorm_{\sigma^2}}(-\eee^{\ii \theta}) = w = - \frac \ii 2 \tan \zeta_{\sigma^2/4}(\theta/2) - \frac 12$ for all $\theta \in \bH$ with sufficiently large $\Im \theta$. The latter restriction can be dropped by uniqueness of analytic continuation since both sides are analytic functions of $\theta\in \bH$.
\end{proof}

\begin{proof}[Proof of Theorems~\ref{theo:weak_conv_zeroes_to_free_normal} and~\ref{theo:hermite_unitary_asympt} assuming Proposition~\ref{prop:log_der_limit_small_r}.]
From Proposition~\ref{prop:log_der_limit_small_r} combined with Lemmas~\ref{lem:psi_mu_n_sigma^2} and~\ref{lem:psi_normal} we conclude that
\begin{equation}\label{eq:psi_transforms_converge}
\psi_{\mu_{n;\sigma^2}}(z)
=
-
\frac z n \cdot \frac{H_n'(z; \sigma^2/n)}{H_n(z;\sigma^2/n)}
\ton
-\frac{1}{1+ \eee^{-2 \ii \zeta_{\sigma^2/4}(\theta/2)}}
=
-\frac \ii 2 \tan \zeta_{\sigma^2/4} (\theta/2) - \frac 12
=
\psi_{\fnorm_{\sigma^2}}(z),
\end{equation}
locally uniformly on $\{|z|\leq r\}$.  By standard results, see Lemmas 2.6 and 2.11 in~\cite{franz_hasebe_schleissinger_monotone}, this already implies  uniform convergence on  $\{|z| \leq  R\}$ for every $R\in (0,1)$ and the weak convergence $\mu_{n;\sigma^2} \to \fnorm_{\sigma^2}$. For convenience of the reader, we provide a full proof.

By Cauchy's integral formula, \eqref{eq:psi_transforms_converge} implies the convergence of the derivatives of any order at $0$ of the above $\psi$-transforms. By~\eqref{eq:psi_trafo_def} this means that the Fourier coefficients of $\mu_{n;\sigma^2}$ converge to those of $\fnorm_{\sigma^2}$, namely
$$
\int_{\bT} u^\ell \mu_{n;\sigma^2}(\dint u) =  \frac{\psi_{\mu_{n;\sigma^2}}^{(\ell)}(0)}{\ell!}  \ton \frac{\psi_{\fnorm_{\sigma^2}}^{(\ell)}(0)}{\ell!} = \int_{\bT} u^\ell \fnorm_{\sigma^2}(\dint u),
$$
for all $\ell\in  \N$. Since we are dealing with measures invariant under complex conjugation, this convergence continues to hold for all $\ell\in \Z$.
By~\cite[p.~50]{billingsley_book68}, it  follows that $\mu_{n;\sigma^2} \to \fnorm_{\sigma^2}$ weakly on $\bT$, thus proving Theorem~\ref{theo:weak_conv_zeroes_to_free_normal}.

To prove Theorem~\ref{theo:hermite_unitary_asympt}, we need the following classical consequence of Montel's fundamental normality test; see~\cite[p.~252]{burckel_book} and also~\cite[p.~219]{burckel_book} for a slightly weaker Vitali--Porter theorem.
\begin{theorem}[Carath\'eodory--Landau theorem]\label{theo:normality_conv}
Let  $f_1,f_2,\ldots$ and $f$ be analytic functions defined on the unit disk $\bD$  and taking values in $\bC \bsl \{a_1, a_2\}$, where $a_1, a_2\in \C$ are distinct.  If $\lim_{n\to\infty} f_n(z_i) = f(z_i)$ for all $i\in \N$, where $z_1,z_2,\ldots \in \bD$  is a sequence of distinct points having an accumulation point in $\bD$, then $f_n\to f$ locally uniformly on $\bD$.
\end{theorem}

We are now in position to  prove~\eqref{eq:asympt_H_n_log_der} of Theorem~\ref{theo:hermite_unitary_asympt}. Since $\Re \psi_{\mu_{n;\sigma^2}}(z) >  - \frac 12$ for all $z\in \bD$,  Theorem~\ref{theo:normality_conv} lifts~\eqref{eq:psi_transforms_converge} to locally uniform convergence on $\bD$. That is, for every $R\in (0,1)$,  we have $\lim_{n\to\infty} \psi_{\mu_{n;\sigma^2}}(z)  = \psi_{\fnorm_{\sigma^2}}(z)$ uniformly on $\{|z|\leq R\}$. In view of Lemmas~\ref{lem:psi_mu_n_sigma^2} and~\ref{lem:psi_normal} we conclude that
\begin{equation}\label{eq:lim_log_der_conv_proof}
\lim_{n\to\infty} \frac 1n \cdot \frac{H_n'(z;\sigma^2/n)}{H_n(z; \sigma^2/n)}
=
\frac{-\eee^{-\ii \theta}}{1+ \eee^{-2 \ii \zeta_{\sigma^2/4}(\theta/2)}}
=
-\eee^{-\ii \theta} \left(\frac \ii 2 \tan \zeta_{\sigma^2/4} (\theta/2) + \frac 12\right)
\end{equation}
uniformly on $\{|z|\leq R\}$. This proves~\eqref{eq:asympt_H_n_log_der}. 

Let us prove~\eqref{eq:log_asympt_H_n} of Theorem~\ref{theo:hermite_unitary_asympt}. Denote the right-hand side of~\eqref{eq:lim_log_der_conv_proof} by $h_{\sigma^2/4}(z)$ with the usual convention $z= -\eee^{\ii \theta}$. Integrating the uniform convergence in~\eqref{eq:lim_log_der_conv_proof} along the segment joining $0$ and $z$ we obtain
\begin{multline}\label{eq:proof_tech_log_H_n}
\frac 1n \log \frac{H_n(z;\sigma^2/n)}{(-1)^n}
=
\int_0^z \frac 1n \cdot \frac{H_n'(y;\sigma^2/n)}{H_n(y; \sigma^2/n)} \dint y
\\
\ton
\int_0^z  h_{\sigma^2/4}(y) \dint y
=
\log \left(1+\eee^{2\ii \zeta_{\sigma^2/4}(\theta/2)}\right) - \frac{\sigma^2}{2\left(1+ \eee^{-2 \ii \zeta_{\sigma^2/4}(\theta/2)}\right)^2}.
\end{multline}
To prove the last equality, one observes that the right-hand side of~\eqref{eq:proof_tech_log_H_n} vanishes at $z=0$ and that its derivative in $z$ is $h_{\sigma^2/4}(z)$; see~\eqref{eq:S_der} for the details of the calculation.
\end{proof}

\subsection{Proof of Proposition~\ref{prop:log_der_limit_small_r}}
We start with an integral representation of the unitary Hermite polynomials. A related result can be found in~\cite[Proposition~2.1]{shamis_zeitouni}.
\begin{lemma}\label{lem:hubbard_stratonovich}
For all $n\in \N$, $\sigma^2>0$ and $z\in \C$ we have
\begin{equation}\label{eq:H_n_as_int}
H_n(z;\sigma^2/n) = (-1)^n  \frac{\sqrt n}{\sqrt{2\pi \sigma^2}} \int_{-\infty}^{+\infty} (1 - z\eee^t)^n \eee^{-\frac n2  \left(\frac{t}{\sigma} + \frac \sigma 2\right)^2}  \dint t.
\end{equation}
\end{lemma}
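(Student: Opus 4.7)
The strategy is the classical Hubbard--Stratonovich transform: turn the ``wrong-sign'' quadratic $\exp\{-\sigma^2 j(n-j)/(2n)\}$ into a Gaussian integral so that the sum over $j$ collapses via the binomial theorem.

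\textbf{Step 1: Completing the square.} Starting from the explicit expression
\[
H_n(z;\sigma^2/n)=\sum_{j=0}^n (-1)^{n-j}\binom{n}{j}\exp\!\left\{-\frac{\sigma^2 j(n-j)}{2n}\right\} z^j,
\]
I would use $j(n-j)=\frac{n^2}{4}-(j-\tfrac{n}{2})^2$ to rewrite the exponent as $-\frac{\sigma^2 n}{8}+\frac{\sigma^2 (j-n/2)^2}{2n}$. The positive quadratic is precisely what makes the direct convergence question of the Taylor series delicate, and it is the reason we need an auxiliary integral.

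\textbf{Step 2: Hubbard--Stratonovich.} Apply the Gaussian identity
\[
\eee^{a^2/2}=\frac{1}{\sqrt{2\pi}}\int_{-\infty}^{+\infty}\eee^{as-s^2/2}\,\dd s
\]
with $a=\sigma(j-n/2)/\sqrt{n}$ to represent $\exp\{\sigma^2(j-n/2)^2/(2n)\}$ as an integral. Since the integrand decays like a Gaussian in $s$ while only finitely many terms in $j$ are present, Fubini is immediate and I can swap the sum and integral freely.

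\textbf{Step 3: Binomial collapse.} After the swap, the sum becomes
\[
\sum_{j=0}^n (-1)^{n-j}\binom{n}{j}\,\eee^{\sigma(j-n/2)s/\sqrt{n}}\,z^j
=\eee^{-\sigma s\sqrt{n}/2}\bigl(z\eee^{\sigma s/\sqrt{n}}-1\bigr)^n,
\]
by the binomial theorem, producing a factor of $(-1)^n$ and turning the integrand into $(1-z\eee^{\sigma s/\sqrt{n}})^n$ times a Gaussian weight.

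\textbf{Step 4: Change of variables.} Setting $t=\sigma s/\sqrt{n}$ gives $\dd s=(\sqrt{n}/\sigma)\dd t$ and
\[
\tfrac{1}{2}s^2+\tfrac{1}{2}\sigma s\sqrt{n}+\tfrac{\sigma^2 n}{8}
=\tfrac{n}{2}\!\left(\tfrac{t^2}{\sigma^2}+t+\tfrac{\sigma^2}{4}\right)
=\tfrac{n}{2}\!\left(\tfrac{t}{\sigma}+\tfrac{\sigma}{2}\right)^{\!2},
\]
which matches the exponent in the stated formula. Collecting constants yields the prefactor $\sqrt{n}/\sqrt{2\pi\sigma^2}$. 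There is no genuine obstacle here; the only thing to be careful about is the bookkeeping of the sign $(-1)^n$ and the factor $\eee^{-\sigma^2 n/8}$, which are absorbed into the completed square.
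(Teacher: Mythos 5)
Your proposal is correct and follows essentially the same route as the paper's proof: complete the square in $j$, apply the Hubbard--Stratonovich identity with $a=\sigma(j-n/2)/\sqrt n$, exchange the (finite) sum with the Gaussian integral, collapse via the binomial theorem, and substitute $t=\sigma s/\sqrt n$. All the bookkeeping of the sign $(-1)^n$, the factor $\eee^{-\sigma^2 n/8}$, and the prefactor $\sqrt{n}/\sqrt{2\pi\sigma^2}$ checks out.
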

\begin{proof}
As in~\cite{shamis_zeitouni}, we use the Hubbard-Stratonovich trick, that is the identity
$\eee^{\frac 12 a^2} = \frac 1 {\sqrt {2\pi}} \int_{-\infty}^{+\infty} \eee^{- \frac 1 2 s^2} \eee^{a s}\dint s$, for all $a\in \C$.  With $a = \sigma (j-\frac n2)/\sqrt n$ it follows  that
$$
\eee^{\frac 1{2n} \sigma^2 (j- \frac n2)^2}
=
\frac 1 {\sqrt {2\pi}} \int_{-\infty}^{+\infty} \eee^{- \frac 1 2 s^2} \eee^{s \sigma(j-\frac n2)/\sqrt n}\dint s.
$$
With this in mind, we can represent the unitary Hermite polynomials as follows:
\begin{align*}
H_n(z;\sigma^2/n)
&=
\sum_{j=0}^n   \binom nj  (-1)^{n-j}   z^j \exp\left\{\frac {\sigma^2 (j^2 - jn)}{2n}\right\} \\
&=
\eee^{-\frac 18 \sigma^2n} \frac 1 {\sqrt {2\pi}}  \int_{-\infty}^{+\infty} \sum_{j=0}^n \binom nj (-1)^{n-j} z^j  \eee^{-\frac 12 s^2} \eee^{s \sigma (j-\frac n2)/\sqrt n}\dint s\\
&=
(-1)^n \eee^{-\frac 18 \sigma^2n } \frac {1} {\sqrt {2\pi}}  \int_{-\infty}^{+\infty} (1 - z\eee^{s \sigma/\sqrt n})^n  \eee^{-\frac 12 s^2} \eee^{ - \frac 12 s \sigma \sqrt n}\dint s\\
&=
(-1)^n \frac {1} {\sqrt {2\pi}}  \int_{-\infty}^{+\infty} (1 - z\eee^{s \sigma/\sqrt n})^n  \eee^{-\frac 12(s + \frac12 \sigma \sqrt n)^2} \dint s.
\end{align*}
After the substitution $t=s\sigma/\sqrt n$ we arrive at~\eqref{eq:H_n_as_int}.
\end{proof}

Our next aim is to prove the following result which will imply Proposition~\ref{prop:log_der_limit_small_r} by differentiation.

\begin{proposition}\label{prop:log_asympt_H_n_small_z}
Fix some $\sigma^2>0$. If $r=r(\sigma^2)>0$ is sufficiently small, then uniformly over the disk $\{z\in \C:\, |z|\leq r\}$ we have
$$
\lim_{n\to\infty}
\frac 1n \log \frac {H_n(z;\sigma^2/n)}{(-1)^n}
=
\log (1+\eee^{2\ii \zeta_{\sigma^2/4}(\theta/2)}) - \frac{\sigma^2}{2(1+ \eee^{-2 \ii \zeta_{\sigma^2/4}(\theta/2)})^2},
$$
where $\theta\in \bH$ is such that $-\eee^{\ii \theta} = z$ if $z\neq 0$. In the case $z=0$, which corresponds to $\Im \theta \to +\infty$, the right-hand side is defined to be $0$, by continuity. The branch of the logarithm on the left-hand side is chosen  such that $\log 1=0$ at $z=0$ and  the function $\log(\ldots)$ is continuous (and analytic).
\end{proposition}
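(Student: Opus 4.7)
The plan is to apply the saddle-point (Laplace) method directly to the Hubbard-Stratonovich representation of Lemma~\ref{lem:hubbard_stratonovich}. Setting
\[
\phi(t;z) := \log(1-z\eee^{t}) - \frac12\Bigl(\tfrac{t}{\sigma}+\tfrac{\sigma}{2}\Bigr)^2,
\]
that lemma writes $(-1)^{-n}H_n(z;\sigma^2/n) = \tfrac{\sqrt{n}}{\sqrt{2\pi\sigma^2}}\int_{\R} \eee^{n\phi(t;z)}\,\dint t$. The integrand $(1-z\eee^{t})^n\eee^{-\frac{n}{2}(t/\sigma+\sigma/2)^2}$ is entire in $t$ and jointly analytic in $(t,z)$, so the branch of the logarithm in $\phi$ only has to be chosen consistently in a neighborhood of the saddle; one fixes it by taking $\log 1 = 0$.

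First I would locate the saddle. The critical equation $\partial_t\phi=0$ reduces to $\frac{-z\eee^{t}}{1-z\eee^{t}} = \frac{t}{\sigma^2}+\frac12$, whose unique solution at $z=0$ is $t=-\sigma^2/2$, with nondegenerate second derivative $\partial_t^2\phi = -1/\sigma^2$. The analytic implicit function theorem then produces an analytic branch $t_\ast(z)$ on some disk $|z|\le r$. Writing $z=-\eee^{\ii\theta}$ with $\Im\theta>0$ and setting $\zeta := \zeta_{\sigma^2/4}(\theta/2)$, the saddle equation matched against the defining relation $\zeta - \tfrac{\sigma^2}{4}\tan\zeta = \theta/2$ of $\zeta$ yields the explicit identification
\[
z\eee^{t_\ast(z)} = -\eee^{2\ii\zeta}, \qquad t_\ast(z) = \tfrac{\ii\sigma^2}{2}\tan\zeta.
\]
Substituting into $\phi$ and using the identity $(1+\eee^{-2\ii\zeta})^2 = 4\eee^{-2\ii\zeta}\cos^2\zeta$, a short computation gives
\[
\phi\bigl(t_\ast(z);z\bigr) = \log(1+\eee^{2\ii\zeta}) - \frac{\sigma^2}{2(1+\eee^{-2\ii\zeta})^2},
\]
exactly the conjectured limit. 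At $z=0$ one has $\Im\zeta\to+\infty$, so both terms on the right vanish in accordance with the normalization $\log 1 = 0$ imposed in the statement.

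The remaining work is the Laplace-type estimate, done uniformly for $|z|\le r$. For $r$ small, $|z\eee^{t_\ast(z)}| = |\eee^{2\ii\zeta}|$ stays bounded away from $1$, so $\log(1-z\eee^{t})$ extends single-valuedly to a strip around the horizontal line $\Im t = \Im t_\ast(z)$. I would deform $\R$ to this line by Cauchy's theorem; the rectangular closures at $\Re t\to\pm\infty$ contribute nothing because the Gaussian factor $\eee^{-(n/2)(t/\sigma+\sigma/2)^2}$ overwhelms the at most exponential $(1+|z|\eee^{\Re t})^n$. Since $\phi''(t_\ast(z);z)$ stays close to $-1/\sigma^2$, the horizontal direction is a descent direction at the saddle, and the standard Gaussian estimate yields
\[
\int_{\R} \eee^{n\phi(t;z)}\,\dint t = \sqrt{\frac{-2\pi}{n\,\phi''(t_\ast(z);z)}}\;\eee^{n\phi(t_\ast(z);z)}\bigl(1+o(1)\bigr)
\]
uniformly in $|z|\le r$. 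Taking logarithms, dividing by $n$, and sending $n\to\infty$ absorbs the $O(n^{-1/2})$ prefactor and gives the claim. The main obstacle I anticipate is producing a family of contours depending analytically on $z$ along which $\Re(\phi(t;z)-\phi(t_\ast(z);z))$ admits a uniform negative quadratic bound near $t_\ast(z)$ and is bounded above by a strictly negative constant away from it; since the complex saddle is a small perturbation of the real saddle at $z=0$ (where the estimates are classical), a continuity/compactness argument combined with the super-Gaussian tail decay should suffice, but some care is required to track the $z$-dependence.
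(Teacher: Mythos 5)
Your proposal is correct in substance and follows the same core route as the paper: the Hubbard--Stratonovich representation of Lemma~\ref{lem:hubbard_stratonovich}, a saddle point at $t_*(z)=\frac{\ii\sigma^2}{2}\tan\zeta_{\sigma^2/4}(\theta/2)$ (this is the paper's Lemma~\ref{lem:crit_point_explicit}), and the same value of the phase at the saddle. The execution differs in two places. First, you deform $\R$ to the horizontal line $\Im t=\Im t_*(z)$, exploiting that the integrand is entire; the paper instead splits the integral at $0$, bounds the positive half-axis crudely, and uses a contour inside $\{\Re t\le 0\}$ (where $\log(1-z\eee^t)$ is single-valued) obtained by perturbing the $z=0$ picture. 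Your contour is a legitimate simplification, and the uniform descent bound you flag as the main obstacle does go through on that line for small $r$: on compacts in $\Re t$ the profile is an $O(r)$-perturbation of the $z=0$ Gaussian, which has a strict nondegenerate maximum at $-\sigma^2/2$, while for large positive $\Re t$ the quadratic term beats the linear bound $\log|1-z\eee^t|\le \Re t+\log(1+r)$; so this is a fillable sketch rather than a gap. Second, and this is the step you should not leave implicit: ``taking logarithms'' of the uniform asymptotics only controls $\frac1n\log|H_n(z;\sigma^2/n)|$, i.e.\ the real part of the limit; convergence of the imaginary part for the prescribed analytic branch needs an extra argument, which is precisely why the paper proves Lemma~\ref{lem:differentiate_limit_real_parts}. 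In your setting the fix is short: $H_n(\cdot\,;\sigma^2/n)$ has no zeros in $\bD$ (Lemma~\ref{lem:no_zeroes_in_D}), so $h_n(z):=\frac1n\log\frac{H_n(z;\sigma^2/n)}{(-1)^n}-\phi(t_*(z);z)$ is analytic on $\{|z|<r\}$, vanishes at $z=0$ (since $H_n(0;\sigma^2/n)=(-1)^n$ and $\phi(t_*(0);0)=0$), and has real part tending to $0$ uniformly by your estimate; Borel--Carath\'eodory (or the Schwarz/Poisson representation) then yields $h_n\to0$ uniformly on smaller disks. With that sentence added your argument is complete; it buys a leaner uniformity argument than the paper's pointwise-asymptotics-plus-harmonic-function lemma, at the price of carrying the $z$-dependence of the contour through the saddle-point estimate.
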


We shall write the integral in~\eqref{eq:H_n_as_int} as $\int_{-\infty}^0 + \int_{0}^{+\infty}$ and analyze both summands separately. As we shall show, the main contribution comes from the negative half-axis.

\medskip
\noindent
\textit{Saddle point analysis.} With the help of the saddle-point method~\cite[\S~45.4, p.~423]{sidorov_fedoryuk_shabunin_book} we shall analyze the integral $\int_{-\infty}^0 \eee^{n S(t;z)}  \dint t$, where
\begin{equation}\label{eq:S_def}
S(t;z) = \log (1-z\eee^t) - \frac 12  \left(\frac{t}{\sigma} + \frac \sigma 2\right)^2,
\qquad
|z|<1, \; \Re t < 0.
\end{equation}
Observe that $S(t;z)$ is an analytic function of its two variables since for $|z|< 1$ and $\Re t < 0$ we have $|z\eee^t| < 1$, implying  that $\log (1-z \eee^t)$ is well-defined and analytic.
The saddle-point equation takes the form
\begin{equation}\label{eq:saddle_point}
\frac{\dint}{\dint t} S(t; z) = - \frac{z\eee^t}{1 - z \eee^{t}} - \frac {t}{\sigma^2} - \frac 12 = 0.
\end{equation}
\begin{lemma}\label{lem:crit_point_explicit}
For every $z\in \bD$, Equation~\eqref{eq:saddle_point} has a unique solution $t_0 := t_0(z; \sigma^2)$ in the left half-plane $\{\Re t < 0\}$. For $z \in \bD\backslash\{0\}$, the solution is given by
\begin{equation}\label{eq:saddle_point_formula}
t_0(z; \sigma^2) = \frac {\ii \sigma^2}{2}  \tan \zeta_{\sigma^2/4}(\theta/2) = 2\ii \left(\zeta_{\sigma^2/4}(\theta/2) - \theta/2\right),
\end{equation}
where we recall the notation $z= -\eee^{\ii \theta}$ for some $\theta \in \bH$.
For $z=0$, the solution is $t_0(0;\sigma^2) = -\frac 12 \sigma^2$.
\end{lemma}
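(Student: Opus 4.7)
The plan is to reduce the saddle-point equation to the defining equation of $\zeta_{\sigma^2/4}$ via a linear change of variables, and then invoke Theorem~\ref{theo:riemann_surface_z_i} to get both existence and uniqueness for free.

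First dispose of the degenerate case $z=0$: the equation \eqref{eq:saddle_point} collapses to $-t/\sigma^2 - 1/2 = 0$, giving the unique root $t_0 = -\sigma^2/2$ (which agrees, by continuity, with the formula $2\ii(\zeta_{\sigma^2/4}(\theta/2)-\theta/2) \to \ii \cdot 2 \cdot (\ii \sigma^2/4 \cdot 2) = -\sigma^2/2$ as $\Im \theta \to +\infty$, using \eqref{eq:zeta_limit_behavior}).

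For $z\in \bD\setminus\{0\}$ write $z = -\eee^{\ii\theta}$ with $\theta\in \bH$ and introduce the new variable $\zeta$ via $t = 2\ii\zeta - \ii \theta$, equivalently $\zeta = (t+\ii\theta)/(2\ii)$. This is an $\bR$-linear bijection of $\C$ onto $\C$, and since $\Im\zeta = -\tfrac12 \Re t + \tfrac12 \Im \theta$, it maps the left half-plane $\{\Re t<0\}$ bijectively onto the half-plane $\{\Im \zeta > \Im\theta/2\}$, which is contained in $\bH$. Substituting, $z\eee^{t} = -\eee^{\ii\theta}\eee^{2\ii\zeta - \ii\theta} = -\eee^{2\ii\zeta}$, so the saddle-point equation~\eqref{eq:saddle_point} transforms into
\begin{equation*}
\frac{\eee^{2\ii\zeta}}{1+\eee^{2\ii\zeta}} - \frac{t}{\sigma^2} - \frac 12 = 0.
\end{equation*}
A short calculation (multiplying numerator and denominator by $\eee^{-\ii\zeta}$) gives $\frac{\eee^{2\ii\zeta}}{1+\eee^{2\ii\zeta}} - \frac 12 = \frac{\ii}{2}\tan\zeta$, so the equation becomes $t = \tfrac{\ii\sigma^2}{2}\tan\zeta$. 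Inserting $t = 2\ii\zeta - \ii\theta$ and dividing by $2\ii$ yields
\begin{equation*}
\zeta - \frac{\sigma^2}{4}\tan\zeta = \frac{\theta}{2},
\end{equation*}
which is precisely the defining equation of $\zeta_{\sigma^2/4}(\theta/2)$ from Theorem~\ref{theo:riemann_surface_z_i}.

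Now invoke Theorem~\ref{theo:riemann_surface_z_i} with parameter $\sigma^2/4$ and argument $\theta/2\in\bH$: the displayed equation has a unique solution $\zeta = \zeta_{\sigma^2/4}(\theta/2)$ in $\bH$, and moreover that solution satisfies $\Im \zeta > \Im(\theta/2)$. Pulling back via the bijection $t = 2\ii\zeta-\ii\theta$, this gives a unique $t_0$ in $\{\Re t<0\}$ solving \eqref{eq:saddle_point}, namely
\begin{equation*}
t_0 = 2\ii\bigl(\zeta_{\sigma^2/4}(\theta/2) - \theta/2\bigr) = \frac{\ii\sigma^2}{2}\tan\zeta_{\sigma^2/4}(\theta/2),
\end{equation*}
where the second identity is the already-derived $t = \tfrac{\ii\sigma^2}{2}\tan\zeta$. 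Conversely, every solution $t$ with $\Re t <0$ produces a $\zeta$ in $\bH$ with $\Im\zeta > \Im\theta/2$ solving the $\zeta$-equation, so uniqueness in Theorem~\ref{theo:riemann_surface_z_i} forces $t = t_0$. There is no real obstacle here; the only delicate point is to confirm that the condition $\Re t<0$ really corresponds to the $\bH$-branch of solutions of the tangent equation (and that this branch is non-vacuous), which is exactly the strict inequality $\Im \zeta_{\sigma^2/4}(\theta/2) > \Im(\theta/2)$ recorded in \eqref{eq:zeta_properties}.
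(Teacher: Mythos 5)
Your proof is correct and follows essentially the same route as the paper: transform the saddle-point equation into the defining relation $\zeta - \tfrac{\sigma^2}{4}\tan\zeta = \theta/2$ and invoke the existence/uniqueness statement of Theorem~\ref{theo:riemann_surface_z_i}. The only minor difference is bookkeeping: you use the bijective affine substitution $t = 2\ii\zeta - \ii\theta$ together with the inequality $\Im \zeta_{\sigma^2/4}(\theta/2) > \Im(\theta/2)$ from \eqref{eq:zeta_properties} to match the half-planes (and you treat $z=0$ explicitly), whereas the paper parametrizes $t = \tfrac{\ii\sigma^2}{2}\tan\zeta$ with $\zeta\in\bH$ determined only modulo $\pi$ and removes that ambiguity by a shift.
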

\begin{proof}
Let $z=-\eee^{\ii \theta}\neq 0$ with $\theta \in \bH$.
Every complex number $t$ with $\Re t <0$ can be represented as $t = \frac {\ii \sigma^2}{2} \tan \zeta$ with some $\zeta\in \bH$ which is unique up to an additive term of the form $\pi n$, $n\in \Z$.   With this notation, Equation~\eqref{eq:saddle_point} takes the form
$$
0
=
\frac 12 + \frac {t}{\sigma^2} + \frac{z\eee^t}{1 - z \eee^{t}}
=
\frac 12 + \frac \ii 2 \tan \zeta - \frac{1}{1- z^{-1}\eee^{-t}}
=
\frac{1}{1 + \eee^{-2\ii \zeta}} - \frac{1}{1 +  \eee^{-\ii \theta - \frac {\ii \sigma^2}{2}\tan \zeta}}.
$$
It follows that $2\ii \zeta = \ii \theta + \frac {\ii \sigma^2}{2} \tan \zeta + 2 \pi \ii n$ for some $n\in \Z$. Hence, for $\zeta^*:=\zeta - \pi n$ we obtain the equation $\zeta^* = \frac \theta 2 + \frac{\sigma^2}{4} \tan \zeta^*$. Since $\zeta^*\in \bH$, it follows that $\zeta^* = \zeta_{\sigma^2/4}(\theta/2)$, see Theorem~\ref{theo:riemann_surface_z_i}, and the proof is complete.
\end{proof}

\begin{figure}[t]
\centering
\includegraphics[width=0.45\columnwidth]{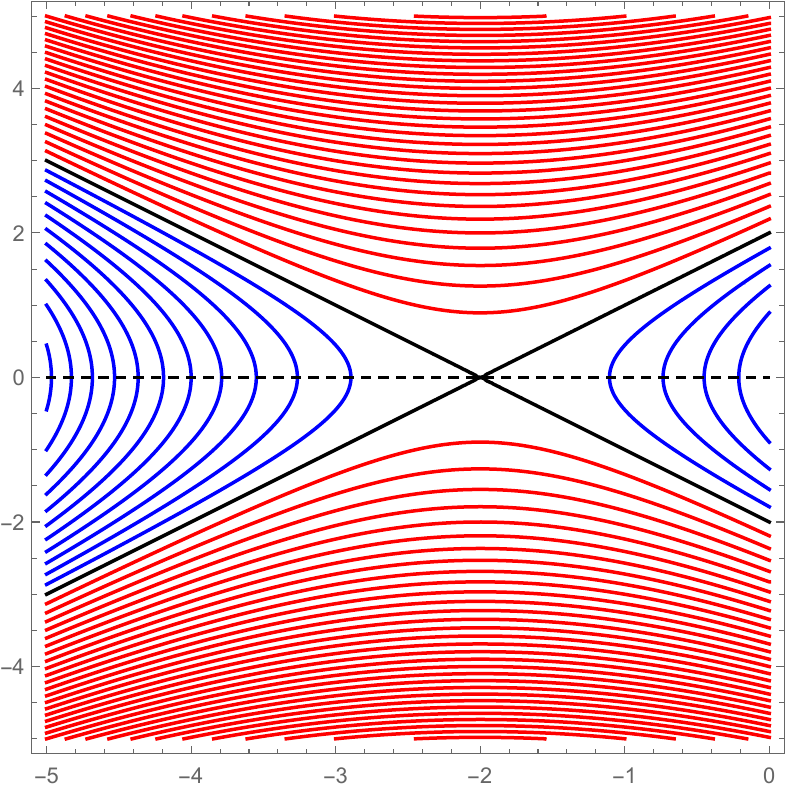}
\includegraphics[width=0.45\columnwidth]{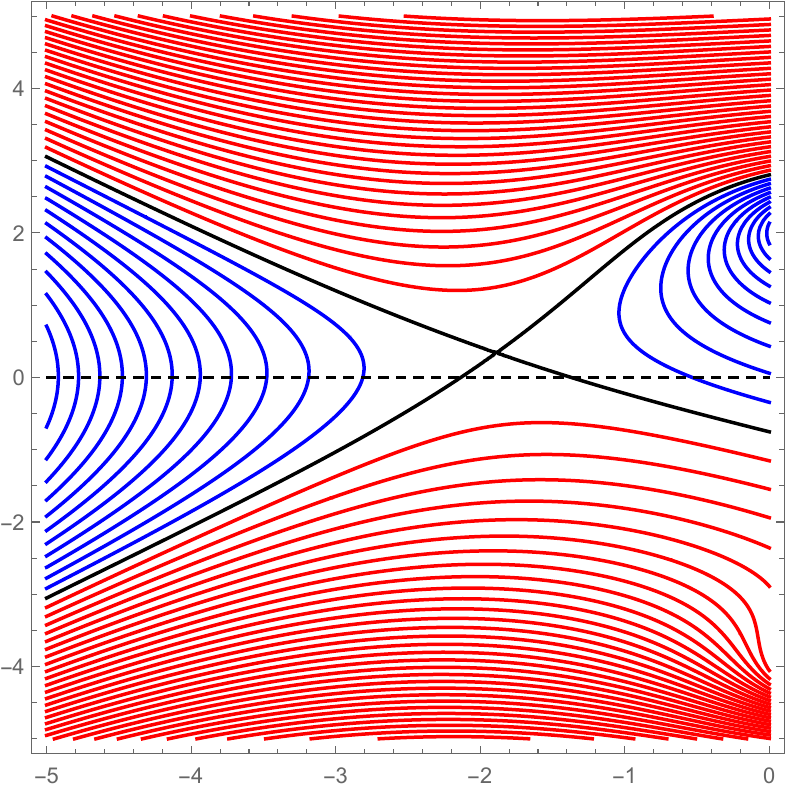}
\caption{Level lines of the function $t \mapsto \Re S(t;z)$ in the left half-plane $\{\Re t \leq 0\}$. Left panel: $z=0$.  Right panel: $z\neq 0$.  In both cases, $\sigma=2$. The level set passing through the saddle point is shown in black.  Level lines where the function takes smaller (respectively, larger) values than at the saddle point are shown in blue (respectively, red).  }
\label{fig:level_lines}
\end{figure}

We are now going to apply the saddle-point method~\cite[\S~45.4, p.~423]{sidorov_fedoryuk_shabunin_book} to the integral $\int_{-\infty}^0 \eee^{n S(t;z)}  \dint t$. To this end, we shall replace the initial contour of  integration by a contour $\gamma$ (depending on $z$ and $\sigma^2$) with the following properties. The contour $\gamma$ starts at $-\infty$, ends at $0$, and stays in the left half-plane $\Re t\leq 0$. Furthermore, $\gamma$ passes through the saddle point $t_0= t_0(z)=t_0(z;\sigma^2)$,  and satisfies $\Re S(t;z) < \Re S(t_0(z); z)$ for all $t\in \gamma\backslash\{t_0(z)\}$. Finally,  in the neighborhood of the point $t_0(z)$, the contour is required to pass through both sectors in which $\Re S(t;z) < \Re S(t_0(z);z)$.

We shall prove the existence of such a contour provided $|z|$ is sufficiently small (even though we conjecture that it exists for all $z\in \bD$). Let $\partial_1 S$ and $\partial_2 S$ denote the partial derivatives of $S(\cdot, \cdot)$ in  the first/second argument.
First of all, for $z=0$, the function $S(t;0) = - \frac 12 (\frac{t}{\sigma} + \frac \sigma 2)^2$ attains its strict maximum on $(-\infty, 0]$ at $t_0(0) = -\frac 12 \sigma^2$. This critical point is simple meaning that $(\partial_1^2 S)(-\frac 12 \sigma^2;0) < 0$.   It follows that the negative half-line $(-\infty, 0]$ satisfies the conditions listed above; see the left panel of Figure~\ref{fig:level_lines}. Observe also that in a small disk $D$ centered at $-\frac 12 \sigma^2$, the set where $\Re S(t;0) < \Re S(-\frac 12 \sigma^2;0)$ consists of two sectors whose boundaries are straight lines crossing $D$ at four points $A_1, A_2, A_3, A_4$. After a small perturbation of $z$, we still have a critical point at $t_0(z)$ which is close to $t_0(0)$ and simple, by analyticity of $t_0(z)$ and $S$; see Lemma~\ref{lem:crit_point_explicit}.  It follows that, in the disk $D$, the set where $\Re S(t;z) < \Re S(t_0(z);z)$ still consists of two sectors; see the right panel of Figure~\ref{fig:level_lines}.  The four points where the boundaries of these sectors cross $D$ are close to the points $A_1,A_2,A_3,A_4$, again by analyticity. It follows that the saddle-point contour $\gamma$ satisfying the above condition exists (and can be obtained by perturbing the segment of the real line contained in $D$).

Applying the saddle-point asymptotics~\cite[\S~45.4, p.~423]{sidorov_fedoryuk_shabunin_book},  we have
\begin{equation}\label{eq:exact_asymptotics}
\int_{-\infty}^0 \eee^{n S(t;z)}  \dint t
\sim
\sqrt{-\frac{2\pi}{n (\partial_1^2 S)(t_0(z); z)}} \eee^{n S(t_0(z);z)},
\qquad
\text{ as }
n\to\infty.
\end{equation}
This holds pointwise in $z$ provided $|z|$ is sufficiently small (the uniformity in $z$ will be addressed later). A formula for $(\partial_1^2 S)(t_0(z); z)$ will be given in~\eqref{eq:second_der_S}. The choice of the square root branch, irrelevant for our purposes, is explained in~\cite[p.~425]{sidorov_fedoryuk_shabunin_book}.  It follows from~\eqref{eq:S_def} and~\eqref{eq:saddle_point_formula} that for $z = -\eee^{\ii \theta} \neq 0$ we have
\begin{equation}\label{eq:S_explicit}
S(t_0(z); z)
=
\log (1-z\eee^{t_0(z)}) - \frac 12 \left(\frac {t_0(z)}{\sigma} + \frac \sigma2\right)^2
=
 \log (1+\eee^{2\ii \zeta_{\sigma^2/4}(\theta/2)}) - \frac{\sigma^2}{2(1+ \eee^{-2 \ii \zeta_{\sigma^2/4}(\theta/2)})^2}.
\end{equation}
Note that for $z=0$ we have $S(t_0(0);0) = 0$. For future use let us note the identity
\begin{align}
\frac{\dint}{\dint z} S(t_0(z);z)
&=
(\partial_1 S)(t_0(z);z) \partial_z t_0(z) + (\partial_2 S) (t_0(z); z)
=
(\partial_2 S) (t_0(z); z)\notag\\
&=
-\frac{\eee^{t_0(z)}}{1 - z \eee^{t_0(z)}}
=
\frac 1z \left(\frac 12 + \frac{t_0(z)}{\sigma^2}\right)\notag\\
&=
-\eee^{-\ii \theta}\left(\frac \ii 2 \tan \zeta_{\sigma^2/4} (\theta/2) + \frac 12\right)
=
\frac{-\eee^{-\ii \theta}}{1+ \eee^{-2 \ii \zeta_{\sigma^2/4}(\theta/2)}}, \label{eq:S_der}
\end{align}
where we used that $(\partial_1 S)(t_0(z);z) = 0$ since  $t = t_0(z)$ solves the saddle-point equation~\eqref{eq:saddle_point}.

\medskip
\noindent
\textit{Contribution of the positive half-axis is negligible.}
We claim that if $r = r(\sigma^2)>0$ is sufficiently small, then uniformly over the disk $|z|\leq r$ it holds that
\begin{equation}\label{eq:int_over_positive_negl_claim}
\int_0^\infty  (1 - z\eee^t)^n \eee^{-\frac n2  \left(\frac{t}{\sigma} + \frac \sigma 2\right)^2}  \dint t
= O(\eee^{ - \frac{1} {16} \sigma^2 n}),
\qquad
\text{ as }
n \to\infty.
\end{equation}
If $0\leq t \leq \sigma^2$, then by choosing a sufficiently small $r>0$ we can achieve that $|1 - z\eee^t|\leq 1 + |z|\eee^t < \eee^{\frac 1 {16}\sigma^2}$ for all $|z|\leq r$ and hence, for all $n\in \N$,
$$
\Big |(1 - z\eee^t)^n \eee^{-\frac n2  \left(\frac{t}{\sigma} + \frac \sigma 2\right)^2}\Big |
\leq
\eee^{\frac{1}{16} \sigma^2 n}  \eee^{- \frac {1}{8}\sigma^2 n}
=
\eee^{-\frac{1}{16} \sigma^2 n}.
$$
It follows that
\begin{equation}\label{eq:int_over_positive_negl_proof_1}
\int_0^{\sigma^2}  (1 - z\eee^t)^n \eee^{-\frac n2  \left(\frac{t}{\sigma} + \frac \sigma 2\right)^2}  \dint t
=O(-\eee^{\frac 1 {16}\sigma^2 n}),
\qquad
\text{ as }
n\to\infty.
\end{equation}
For $t\geq  \sigma^2$ we argue as follows. Since $1\leq \eee^t$ for all $t\geq 0$, we have $|1 - z \eee^t|\leq 1 + |z|\eee^t \leq \eee^t (1+r)$ provided $|z|\leq r$. It follows that
$$
\Big | \int_{\sigma^2}^\infty  (1 - z\eee^t)^n \eee^{-\frac n2  \left(\frac{t}{\sigma} + \frac \sigma 2\right)^2}  \dint t \Big|
\leq
(1+r)^n  \int_{\sigma^2}^\infty \eee^{-\frac n2  \left(\frac{t}{\sigma} - \frac \sigma 2\right)^2}    \dint t
=
(1+r)^n \eee^{-\frac 1 8 \sigma^2 (n-1)} \int_{\sigma^2}^\infty \eee^{-\frac 12  \left(\frac{t}{\sigma} - \frac \sigma 2\right)^2}    \dint t
$$
since $\frac 1 2 (\frac{t}{\sigma} - \frac \sigma 2)^2  \geq \frac 18 \sigma^2$ for $t\geq \sigma^2$.
Note that the integral on the right-hand side converges. Also, if $r>0$ is sufficiently small, then $1+ r \leq \eee^{\frac 1 {16} \sigma^2}$ and it follows that
\begin{equation}\label{eq:int_over_positive_negl_proof_2}
\int_{\sigma^2}^\infty  (1 - z\eee^t)^n \eee^{-\frac n2  \left(\frac{t}{\sigma} + \frac \sigma 2\right)^2}  \dint t
=
O(\eee^{ -\frac 1 {16} \sigma^2 n}),
\qquad
\text{ as }
n\to\infty.
\end{equation}
Combining~\eqref{eq:int_over_positive_negl_proof_1} and~\eqref{eq:int_over_positive_negl_proof_2} yields~\eqref{eq:int_over_positive_negl_claim}, thus completing the proof that the contribution of the positive half-line is negligible.

\medskip
\noindent
\textit{Exact asymptotics of $H_n$.}
It follows from~\eqref{eq:exact_asymptotics} and~\eqref{eq:int_over_positive_negl_claim}, together with the fact that  $S(t_0(z);z)$ converges to $0$ as $z\to 0$, that, for sufficiently small $|z|$, the contribution of the positive half-axis is negligible in the sense that
$$
\int_{-\infty}^\infty \eee^{n S(t;z)}  \dint t
\sim
\sqrt{-\frac{2\pi}{n (\partial_1^2 S)(t_0(z); z)}} \eee^{n S(t_0(z);z)},
\qquad
\text{ as }
n\to\infty.
$$
In view of Lemma~\ref{lem:hubbard_stratonovich}, we can write this as
\begin{equation}\label{eq:hermite_poly_saddle_point_exact_asympt}
\frac{H_n(z;\sigma^2/n)}{(-1)^n} \sim  \sqrt{-\frac{1}{\sigma^2 (\partial_1^2 S)(t_0(z);z)}} \eee^{n S(t_0(z);z)}
\qquad
\text{ as }
n\to\infty.
\end{equation}
This holds pointwise for every  $|z| \leq  r$ provided that $r>0$ is sufficiently small. Although we shall not need this fact, let us mention that after some work it is possible to verify that
\begin{equation}\label{eq:second_der_S}
(\partial_1^2 S)(t_0(z); z)
=
\frac{z^{-1}\eee^{-t_0(z)}}{(1 - z^{-1}\eee^{-t_0(z)})^2} - \frac 1 {\sigma^2}
=
-\frac{1}{4 \cos^2 \zeta_{\sigma^2/4}(\theta/2)} - \frac 1 {\sigma^2}.
\end{equation}

\medskip
\noindent
\textit{Proof of the uniform convergence.}
Now we would like to take the logarithm of both sides of~\eqref{eq:hermite_poly_saddle_point_exact_asympt}. Observe that the left-hand side cannot become zero for $z\in \bD$ by Lemma~\ref{lem:no_zeroes_in_D}. Trivially, the same conclusion holds for the right-hand side. Therefore, applying the function $w\mapsto \log |w|$ to both sides of~\eqref{eq:hermite_poly_saddle_point_exact_asympt} and dividing by $n$ we get
\begin{equation}\label{eq:asympt_H_n_proof_Re}
\lim_{n\to\infty} \frac 1n \log \left|\frac{H_n(z;\sigma^2/n)}{(-1)^n}\right|
=
\Re S(t_0(z);z).
\end{equation}
This holds pointwise in $z$ provided $|z| \leq r$ with $r>0$ sufficiently small. Note that we did not apply the function $w\mapsto \log w$ to avoid difficulties with the choice of the branch.  Now, our aim is to prove that
\begin{equation}\label{eq:asympt_H_n_proof_no_Re}
\lim_{n\to\infty} \frac 1n \log\frac{H_n(z;\sigma^2/n)}{(-1)^n}
=
S(t_0(z);z)
\end{equation}
locally uniformly in $|z| < r$  and with the same convention for the logarithm as in Proposition~\ref{prop:log_asympt_H_n_small_z}.
To this end, we shall use the following lemma which strengthens~\cite[Lemma~3.8]{kabluchko_rep_diff_free_poi}.

\begin{lemma}\label{lem:differentiate_limit_real_parts}
Let $h_1(z), h_2(z),\ldots$ be a sequence of holomorphic functions defined on some domain $\mathcal D\subset \C$.  If $\Re h_n(z) \to 0$ pointwise on $\cD$  and the sequence  $(\Re h_n(z))_{n\in \N}$ is locally uniformly bounded from above on $\mathcal D$, then $h_n'(z) \to 0$ locally uniformly on $\cD$.
\end{lemma}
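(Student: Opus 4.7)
My plan is to proceed in two stages. First, I will upgrade the pointwise convergence $u_n := \Re h_n \to 0$ to local uniform convergence on $\cD$. Second, I will pass from there to local uniform convergence $h_n' \to 0$ via the classical interior gradient estimate for harmonic functions combined with the Cauchy-Riemann equations.

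The key difficulty in the first stage is that the hypothesis supplies only a one-sided (from above) local uniform bound on $u_n$, so I need to bootstrap this into a genuine two-sided local uniform bound before any normal family argument becomes available. The natural tool is Harnack's inequality. I would fix a compact set $K \subset \cD$ and choose an open neighborhood $U$ with $K \subset U \subset \overline{U} \subset \cD$ on which $u_n \leq M$ uniformly in $n$. Then $w_n := M - u_n \geq 0$ is a sequence of nonnegative harmonic functions on $U$, and Harnack's inequality yields a constant $C = C(K, U)$, depending only on the geometry of $K$ and $U$, such that $\sup_K w_n \leq C \inf_K w_n$ for every $n$. Picking any base point $z_0 \in K$ and using the pointwise assumption $u_n(z_0) \to 0$, we have $w_n(z_0) \to M$, hence $\inf_K w_n \leq w_n(z_0) \leq 2M$ for all sufficiently large $n$. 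Consequently $\sup_K w_n \leq 2CM$, i.e.\ $u_n \geq M - 2CM$ on $K$. Combined with the a priori upper bound, this gives a local uniform bound $|u_n| \leq M'$ on $K$. With $(u_n)$ now locally uniformly bounded, Montel's theorem for harmonic functions asserts that $(u_n)$ is a normal family, so every subsequence admits a further subsequence converging locally uniformly to a harmonic function on $\cD$. Since the pointwise limit is identically zero, every subsequential limit must be zero, and the usual subsequence--subsubsequence argument forces the full sequence $u_n \to 0$ locally uniformly on $\cD$.

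For the second stage, the Cauchy-Riemann equations give $h_n' = \partial_x u_n - i \partial_y u_n$, whence $|h_n'(z)| = |\nabla u_n(z)|$, and the standard Poisson-kernel derivative estimate reads
$$|\nabla u_n(z)| \leq \frac{C_0}{\rho}\, \sup_{\overline{B(z,\rho)}} |u_n|$$
for every disk $\overline{B(z, \rho)} \subset \cD$. Given a compact $K \subset \cD$, choose $\rho > 0$ small enough that the closed $\rho$-neighborhood $K^\rho$ is still compactly contained in $\cD$. The first stage yields $\sup_{K^\rho} |u_n| \to 0$, and applying the estimate above at every $z \in K$ with this fixed $\rho$ shows $\sup_K |h_n'| \to 0$, as required.

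The main obstacle is the Harnack step in the first stage, i.e.\ converting one-sided local uniform boundedness into two-sided local uniform boundedness. Once that reduction is in place, Montel's theorem for harmonic functions together with the standard interior gradient bound finishes the argument with no additional subtlety.
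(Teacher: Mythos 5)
Your proposal is correct, and its first stage (Harnack applied to $M-\Re h_n$ to convert the one-sided local bound plus pointwise convergence at a base point into a two-sided locally uniform bound) is exactly the first step of the paper's proof. Where you diverge is in the second stage: the paper stays with the contour integral representation $h_n'(z)=\frac{1}{\pi\ii}\oint_{|w-z_0|=r}\frac{\Re h_n(w)}{(w-z)^2}\,\dint w$ and pushes the pointwise convergence of $\Re h_n$ through this integral, invoking Egorov's theorem on the circle to get uniformity in $z\in B_{r/2}(z_0)$; you instead first upgrade $\Re h_n\to 0$ to locally uniform convergence by a normal-family (Montel/Arzel\`a--Ascoli) argument for locally uniformly bounded harmonic functions, and then conclude via the interior gradient estimate $|\nabla u_n(z)|\leq \frac{C_0}{\rho}\sup_{\overline{B(z,\rho)}}|u_n|$ together with $|h_n'|=|\nabla u_n|$ from the Cauchy--Riemann equations. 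Your route avoids the measure-theoretic step (Egorov) entirely and yields the stronger intermediate statement that $\Re h_n\to 0$ locally uniformly, at the price of invoking the compactness theory for harmonic families; the paper's route is more quantitative and self-contained, needing only one explicit integral formula plus a bounded-convergence argument. One cosmetic point in your write-up: the Harnack comparison $\sup_K w_n\leq C\inf_K w_n$ requires the intermediate set $U$ to be connected (or you may simply take $K$ to be a closed disk and use Harnack on disks), which is easily arranged since $\cD$ is a domain; with that understood, the argument is complete.
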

\begin{remark}\label{rem:lem:differentiate_real_parts}
If, additionally, $h_n(y_0) \to 0$ for some $y_0\in \cD$, then, integrating, we conclude that $h_n(z) \to 0$ locally uniformly on $\cD$.
\end{remark}
\begin{proof}[Proof of Lemma~\ref{lem:differentiate_limit_real_parts}.]
Consider some closed disk $B$ contained in $\mathcal D$ and centered at $z_0\in \mathcal D$. We know that $\Re h_n (z)\leq C$, for all $n\in \N$ and $z\in B$. Also, we know that $\Re f_n (z_0)$ converges (and hence is bounded below). By Harnack's inequality applied to the non-negative harmonic functions $C - \Re h_n(z)$ it follows that $C-\Re h_n(z)$ is  uniformly bounded from above on $B$. We conclude that the sequence $(\Re h_n(z))_{n\in \N}$ is locally uniformly bounded, both from above and from below.

Take some $z_0\in \mathcal D$ and let $B_r(z_0)\subset \mathcal D$ be a closed disk of radius $r$ centered at $z_0$ and contained in $\mathcal D$. It is known (see, e.g., the proof of Lemma~3.8 in~\cite{kabluchko_rep_diff_free_poi}) that for all $z$ in the interior of $B_r(z_0)$, we have
\begin{equation}\label{eq:integral_rep_h_n_prime}
h_n'(z) = \frac 1 {\pi \ii} \oint_{|w-z_0| = r} \frac{\Re h_n(w)}{(w-z)^2} \dint w,
\end{equation}
where the integration contour is the boundary of $B_r(z_0)$, oriented counter-clockwise.
Having~\eqref{eq:integral_rep_h_n_prime} at our disposal, we claim that $h_n'(z) \to 0$ uniformly over $z\in B_{r/2}(z_0)$.
Indeed, \eqref{eq:integral_rep_h_n_prime} implies
$$
\sup_{z\in B_{r/2}(z_0)} |h_n'(z)| \leq  \frac 1 {\pi} \cdot  \Bigg(\sup_{\substack{|w-z_0| = r\\ z\in B_{r/2}(z_0)}} \frac{1}{|w-z|^2} \Bigg) \cdot  \int_{|w-z_0| = r} |\Re h_n(w)| |\dint w|.
$$
Since the supremum is finite and the integral converges to $0$ by the dominated convergence theorem, the desired conclusion follows.
\end{proof}

We apply Lemma~\ref{lem:differentiate_limit_real_parts} with $h_n(z) := \frac 1n \log\frac{H_n(z;\sigma^2/n)}{(-1)^n} - S(t_0(z);z)$. These functions are analytic on $\{|z|<r\}$ and we have $h_n(0) = 0$ as well as $\Re h_n(z) \to 0$ pointwise, by~\eqref{eq:asympt_H_n_proof_Re}.  Observe that the functions $(\Re h_n(z))_{n\in \N}$ are locally uniformly bounded from above since by~\eqref{eq:hermite_poly_circ_def1} and the triangle inequality,
$$
\frac 1n \log \left|\frac{H_n(z; \sigma^2/n)}{(-1)^n}\right| \leq \frac 1n \log \left(\sum_{j=0}^n \binom nj |z|^j\right) \leq \log (1+|z|),
\qquad z\in \bC.
$$
Lemma~\ref{lem:differentiate_limit_real_parts} (together with Remark~\ref{rem:lem:differentiate_real_parts}) yields~\eqref{eq:asympt_H_n_proof_no_Re}. Taking into account~\eqref{eq:S_explicit}, this proves Proposition~\ref{prop:log_asympt_H_n_small_z}. Since a locally uniform convergence of analytic functions can be differentiated, we infer that
\begin{align*}
\lim_{n\to\infty} \frac 1n \cdot \frac{H_n'(z;\sigma^2/n)}{H_n(z; \sigma^2/n)}
&=
\frac{\dint}{\dint z} S(t_0(z);z)
=
\frac{-\eee^{-\ii \theta}}{1+ \eee^{-2 \ii \zeta_{\sigma^2/4}(\theta/2)}}
=
-\eee^{-\ii \theta}\left(\frac \ii 2 \tan \zeta_{\sigma^2/4} (\theta/2) + \frac 12\right);
\end{align*}
see~\eqref{eq:S_der}. The proof of Proposition~\ref{prop:log_der_limit_small_r} is complete.
\hfill $\Box$

\section*{Acknowledgements}
The author is grateful to two anonymous referees for several insightful comments and to Octavio Arizmendi, Jorge Garza-Vargas, Jonas Jalowy, Matthias L\"owe and Daniel Perales  for useful discussions and, in particular, for pointing out the works of Mirabelli~\cite{mirabelli_diss} and Hall and Ho~\cite{hall_ho}.
Supported by the German Research Foundation under Germany's Excellence Strategy  EXC 2044 -- 390685587, \textit{Mathematics M\"unster: Dynamics - Geometry - Structure}.


\bibliography{curie_weiss_complex_bib}
\bibliographystyle{plainnat}

\end{document}